\numberwithin{equation}{section}
\newtheorem{thm}{Theorem}
\newtheorem{lem}{Lemma}
\newtheorem{prop}{Proposition}
\theoremstyle{remark}
\theoremstyle{definition}
\newtheorem{defn}{Definition}
\newcommand{\1}[1]{\mathbbm{1}\!\left[#1\right]}
\newcommand{\E}[1]{\operatorname{E}\!\left[#1\right]}
\newcommand{\tr}[1]{\operatorname{tr}\!\left(#1\right)}
\newcommand{\diag}[1]{\text{diag}\!\left(#1\right)}
\begin{document}

\title{Noise Estimation in the Spiked Covariance Model}

\author[D. Ch\'etelat and M.T. Wells]{Didier Ch\'etelat and Martin T. Wells}
\address{Department of Statistical Science, Cornell University}
\email{dc623@cornell.edu}
\email{mtw1@cornell.edu}

\thanks{This research was partially supported by NSF Grants DMS-1208488 and CCF-0808864.}

\begin{abstract}
The problem of estimating a spiked covariance matrix in high dimensions under Frobenius loss, and the parallel problem of estimating the noise in spiked PCA is investigated. We propose an  estimator of the noise parameter by minimizing an unbiased estimator of the invariant Frobenius risk using calculus of variations. The resulting estimator is shown, using random matrix theory, to be strongly consistent and essentially asymptotically normal and minimax for the noise estimation problem. We apply the construction to construct a robust spiked covariance matrix estimator with consistent eigenvalues.
\end{abstract}

\subjclass[2000]{Primary 62F10; secondary 62H25, 62H12.}

\keywords{Covariance estimation, high-dimensional asymptotics, principal components, random matrix theory, risk function, Wishart distribution, spiked covariance.}

\maketitle

\section{Introduction}\label{sec:I}

The estimation of covariance matrices in a high dimensional framework has seen a surge of interest in the past years. The natural estimator, the sample covariance matrix, is well known to be inadequate in this context. The problem has been well studied under many sparsity scenarios: for example, zeros in the coordinates of the matrix \citep{Bickel08b,ElKaroui08b,Rothman09,Cai11b} or its inverse \citep{MeinshausenBuhlmann06,Friedman08,Cai11a,
Ravikumar11,Rothman08}, bandedness \citep{Bickel08a,Bien14} and many others. This paper will focus on the spiked model, first introducted by \cite{Johnstone01}.

In the spiked model, the $p\times p$ covariance matrix $\Sigma$ has distinct eigenvalues $\gamma_1+\sigma^2>...>\gamma_\rho+\sigma^2$, and a smallest eigenvalue $\sigma^2$ of multiplicity $p-\rho$. It often provides good approximations in low and high dimensional settings, with small $\rho$ being seen as a form of low rank sparsity in the data. It is also of substantial theoretical interest, being one of the few non-trivial settings in which random matrix theory has been extensively studied.

A related problem is principal components analysis. In PCA, one estimates eigenvectors associated with large eigenvalues of $\Sigma$, and perform dimension reduction using a truncated spectral decomposition. A traditional problem with the technique is that the number of eigenvectors to retain is not clear. However, if the true covariance matrix $\Sigma$ is spiked, it is natural to associate its spiked rank $\rho$ with the ideal number of eigenvectors to select, recasting the selection of the number of components as a rigorous statistical estimation problem.

Successful high-dimensional PCA usually requires good estimation of $\sigma^2$ (see e.g. \cite{Johnstone09}), a problem we will refer to as noise estimation. Although distinct from estimation of the covariance matrix itself, there is a context in which these two problems, estimation of $\Sigma$ and $\sigma^2$, are analogous.

This context is as follows. Asymptotics are high-dimensional in the sense that $p$ tends to infinity with the sample size $n$; for mathematical convenience we focus on the regime where the ratio $p/n$ tends to a strictly positive constant as $n\rightarrow\infty$. The noise estimation problem is to estimate $\sigma^2$ under, say, absolute error loss $L(\hat\sigma^2,\sigma^2)=|\hat\sigma^2-\sigma^2|$, while the covariance problem is to estimate the spiked $\Sigma$ under the Frobenius loss $L_F(\hat\Sigma,\Sigma)=\|\hat\Sigma-\Sigma\|_F^2/p$ using a spiked estimator. This normalization is natural in this setting, since under normality the risk $\E{L_F(S,\Sigma)}$ of the sample covariance matrix $S$ tends to a strictly positive constant.

Then, in essence, all that really matters in the covariance estimation problem is estimation of the noise level. Indeed, consider two spiked estimators $\hat\Sigma_i=\hat\Gamma_i+\hat\sigma^2I$, $i=1,2$ with asymptotically finite spiked parts $\hat\Gamma_i$, which we take to mean that their ranks $\hat\rho_i=\text{rk}(\hat\Gamma_i)$ and largest eigenvalues $\lambda_1(\hat\Gamma_i)$ are asymptotically finite. Then
\begin{align}
\frac{\|\hat\Sigma_1-\hat\Sigma_2\|_F^2}p\leq\frac{\hat\rho_1+ \hat\rho_2}p
\Big[\lambda_1(\hat\Gamma_1)+\lambda_1(\hat\Gamma_2)\Big]^2
\underset{n\rightarrow\infty}{\longrightarrow}0
\qquad
\text{ a.s.}
\label{eq:I-dual}
\end{align}
This means we can interpret the two problems as asymptotically analogous in practice. This reasoning is short of being a formal result of equivalence, but will serve as a guiding principle.

We propose a solution to these parallel problems as follows. We first restrict ourselves to orthogonally invariant estimators of the spiked form; this large class can be thought as performing spiked corrections of the eigenvalues of the sample covariance matrix. For this class, there exists an unbiased risk estimator (URE) in the closely related invariant loss $L_H(\hat\Sigma,\Sigma)=\|\hat\Sigma\Sigma^{-1}-I\|_F^2/p$, which we will refer to as the Haff loss. We propose to find an optimal choice of noise estimator by minimizing this URE using calculus of variations. This approach is close in spirit to the work of \cite{Stein75,Stein86}, where he considers a loss based on a normal log-likelihood, although it is not specifically high-dimensional. It is also close to the Bayesian approach of \cite{Haff91}. More generally, the idea to correct the eigenvalues of the sample covariance matrix is also found in previous work by  \cite{LedoitWolf04}, \cite{ElKaroui08a}, \cite{LedoitWolf12} and \cite{Donoho14}.

The URE of the covariance estimator depends on first and second derivatives of the noise estimator, so directly minimizing the risk would yield an estimator that depends on the truth. It however happily turns out that the ``dominant'' part of this URE does not depend on the derivatives. It is therefore possible to obtain, in closed form, an estimator optimal for the dominant part of the URE.

We prove that our proposed estimator is well-behaved; for example, it is strongly consistent for $\sigma^2$, even if the chosen estimators of $\gamma_k$ and $\rho$ are not. It is moreover essentially asymptotically normal of rate $n$, and we prove that this is the optimal minimax rate for the noise estimation problem. To illustrate concretely why this approach is interesting, we use it to construct a robust spiked covariance estimator. It seems to never perform worse than $S$ in general, even in worst-case scenarios; while it performs remarkably well in spiked settings, and we show its eigenvalues are consistent.

We reiterate that in contrast with much work in high dimensional covariance estimation, we do not work with a sparsity assumption that many components of $\Sigma$ or $\Sigma^{-1}$ are zero. However, one can perfectly think of a spiked structure as a form of sparsity in itself, with $\rho$ as sparsity parameter, which fits within the generally accepted principle that improved estimation in high dimensions is difficult unless some form of sparsity holds with the truth. The fact that we can construct an estimator that can exploit that structure when present, yet be robust to the assumption is encouraging.

The article is divided as follows. The regularity conditions, construction of the unbiased risk estimator and construction of the noise estimator is in Section \ref{sec:C}. Investigations of properties of the noise estimator is done in Section \ref{sec:P}. The example construction and simulations are in Section \ref{sec:A}. After some comments in Section \ref{sec:D}, we cover the proofs of the claims in Section \ref{sec:L}.

\begin{paragraph}{Notation}
The following notation will be used throughout. 
We write $H_p(\mathbb{R})$ for the simplex $\left\{x\in\mathbb{R}^p\,\big\vert\,x_1>...>x_p>0\right\}$. The real $p$-dimensional orthogonal group is denoted $O_p(\mathbb{R})$. The Frobenius norm of a matrix $A$ is the sum of its squared eigenvalues, denoted $\|A\|_F=\tr{A^2}^{1/2}$, while the spectral norm is its largest singular value, denoted $\|A\|_2=\sigma_{\max}(A)$. The notation $\text{d}_\text{TV}(\mu_1,\mu_2)$ stands for the total variation distance between two probability measures $\mu_1$, $\mu_2$ on an underlying measurable space $(\Omega,\mathcal{B})$, which equals $\sup_{A\in\mathcal{B}}|\mu_1(A)-\mu_2(A)|$. The $p$-dimensional Wishart distribution with $n$ degrees of freedom and covariance matrix $\Sigma$ is written $W_p(n,\Sigma)$.
\end{paragraph}
\section{Construction}\label{sec:C}

We work in the following setting. Assume the data is an i.i.d. sample $X_1,...,X_n\sim\text{N}_p(0,\Sigma_p)$, with $n\geq p$ and $\Sigma_p>0$. For such a sample, one can stack the data into a matrix $X=(X_1',...,X_n')$ and let $S=X'X/n=OLO'$, $L=\diag{l_1,...,l_p}$ be the decreasing spectral decomposition of the sample covariance matrix, with $l_1>...>l_p>0$ its ordered eigenvalues. The random matrix $S$, which is distributed as a scaled Wishart $n^{-1}\text{W}_p(n,\Sigma_p)$, serves as a naive estimator of $\Sigma$ upon which we wish to improve. The normality and restriction to $n\geq p$ are necessary for the construction of the unbiased risk estimator that will follow; extensions will be discussed in Section \ref{sec:D}.

As mentioned in the introduction, to adequately discuss high-dimensional behavior, we will also let this setting grow in complexity. We focus our attention on full-rank linear regimes, where a sequence of positive-definite covariance matrices of growing dimension $\Sigma_1,\Sigma_2,\Sigma_3,...$ is fixed; and $p=p_n$, as a function of the sample size, grows in the sense that $p_n/n\rightarrow c$ for some $c\in(0,1)$. It will then be assumed that for every $(n ,p_n)$, some i.i.d. sample $X_1,...,X_n\sim\text{N}_{p_n}(0,\Sigma_{p_n})$ will be available and a corresponding sample covariance matrix $S$ constructed. 

For such settings, the sequence $\{\Sigma_p\}$ is completely arbitrary beyond the requirement that each member be positive-definite. Of particular interest to us is the case where the covariance matrices form a {\it spiked sequence}, which we define as follows.
\begin{defn}
A sequence of covariance matrices $\{\Sigma_p\}$ is spiked if there exists a collection $\gamma_1>...>\gamma_\rho>0$ of size $\rho\geq0$ and a $\sigma^2>0$ such that for any $p$, $\Sigma_p=\diag{\gamma,0}+\sigma^2I_p$, where $\gamma=(\gamma_1,...,\gamma_\rho)$.
\end{defn}
When discussing asymptotics, we will sometimes need that the spiked eigenvalues $\gamma_1,...,\gamma_\rho$ be sufficiently large with respect to the noise for efficient estimation to be possible. In practice, this will mean requiring that   $\gamma_\rho/\sigma^2>\sqrt{c}$, for $c$ the asymptotic $p_n/n$ ratio. The importance of this supercriticality condition for spiked eigenvalue estimation was first remarked by \cite{Baik05} before being extended to the setting we are considering by  \cite{BaikSilverstein06}, \cite{Paul07} and \cite{Nadler08}. They showed that for $1\leq k\leq\rho$, the eigenvalues of the sample covariance matrix satisfy
\begin{align}
l_k\xrightarrow[n\rightarrow\infty]{\text{a.s.}}
\begin{cases} \left(1+c\sigma^2\frac{\gamma_k+\sigma^2}{\gamma_k}\right)[\gamma_k+\sigma^2]
&\text{ if } \gamma_k>\sqrt{c}\sigma^2
\\
(1+\sqrt{c})^2\sigma^2
&\text{ if } \gamma_k\leq\sqrt{c}\sigma^2
\end{cases},
\label{eq:l-limit}
\end{align}
with the $l_{\rho+1},...,l_p$ asymptotically distributed like a scaled Mar{\v c}enko-Pastur $\sigma^2\text{MP}(c)$ distribution. Therefore, the asymptotic spectrum of $S$ do not contain any information about those $\gamma_k$ below the critical threshold $\sqrt{c}\sigma^2$. But since their estimation is mostly tangential to our goals, supercriticality will not always be necessary, and we will make it clear when it will be.

Let us now turn our attention to the task at hand. The parallel problems we wish to solve are
\begin{enumerate}[(i)]
\item\label{eq:prob-noise} the estimation of $\sigma^2$ under the absolute error loss $L(\hat\sigma^2,\sigma^2)=|\hat\sigma^2-\sigma^2|$;
\item\label{eq:prob-cov} the estimation of $\Sigma$ under the Frobenius loss $L_F(\hat\Sigma,\Sigma)=\|\hat\Sigma-\Sigma\|_F^2/p$ using spiked estimators.
\end{enumerate}
Under spikedness, these two problems are parallel to each other in the sense of (\ref{eq:I-dual}). The approach we take begins with aspect (\ref{eq:prob-cov}) - we seek a good covariance estimator $\hat\Sigma$ in spiked form 
 $\hat\Gamma+\hat\sigma^2I$, with $\hat\rho=\text{rk}(\hat\Gamma)$ small with respect to $p$, which we interpret as $\hat\rho$ a.s. tending to a finite constant. By appealing again to (\ref{eq:I-dual}), it is clear that for the Frobenius loss, the spiked part $\hat\Gamma$ is asymptotically dominated by noise estimation. We therefore might as well {\it choose} $\hat\Gamma$ based on convenience: for example, we can pick one with consistent eigenvalue estimators, or some other property. A specific choice will be considered in Section \ref{sec:A}. Alternatively, the recent results of \cite{Donoho14} could provide an attractive choice based on consideration of single eigenvalue corrections, and we comment on this further in Section \ref{sec:D}. Once a choice of $\hat\Gamma$ is made, we can then look for an optimal $\hat\sigma^2$, which would simultaneously solve aspects (\ref{eq:prob-noise}) and (\ref{eq:prob-cov}) of the problem,  while being asymptotically independent of our choice of $\hat\Gamma$.

Being quite free in selecting the spiked part, let us focus on mathematically convenient possibilities. A first restriction is to take $\hat\Gamma$ orthogonally invariant -- that is, of the form $\hat\Gamma=O\diag{\hat\gamma,0}O'$ for some estimators $\hat\gamma_1>...>\hat\gamma_{\hat\rho}>0$ and $O\in O_p(\mathbb{R})$ the matrix of ordered eigenvectors of $S$. With this choice, our estimators $\hat\Sigma$ can be thought as performing spiked corrections on the sample covariance matrix $S$.

A second restriction will be necessary. At this point in our discussion the spiked, rank and noise estimators $\hat\gamma$, $\hat\rho$, $\hat\sigma^2$ have been essentially arbitrary. This is too general for the construction of the URE that will follow, so we must to restrict ourselves to sufficiently regular estimators. The regularity conditions come in two flavors, weak and strong, and are statements of integrability; these conditions simply guarantee that expected values appearing in the construction of the URE are convergent. Combining the invariance and regularity  restrictions, we define the following.

\begin{defn}\label{defn:WRC} A spiked eigenvalue estimator $\hat\Gamma$ satisfies the weak regularity conditions if it is of the form
$\hat\Gamma=O\diag{\hat\gamma}O'$ for $S=OLO'$ and satisfies the following. Let $\hat\rho=\text{rk}(\hat\Gamma)$. For each $1\leq k\leq p$, $\hat\gamma_k$ are a.s. $C^2(H_p(\mathbb{R});\mathbb{R})$ functions of $l_1,...,l_p$ with boundary cases $\1{\hat\rho<k}\hat\gamma_k=0$ and $\1{\hat\rho=p}\hat\gamma_k=\1{\hat\rho=p}l_k$ for which both expectations
\begin{align*}&
\E{\left|\frac{\hat\gamma_k}{l_k}\right|^{9(1+\epsilon)}}
\text{ and }
\E{\left|\frac{\partial \hat\gamma_k}{\partial l_k}\right|^{4.5}}
\end{align*}
are finite for some $\epsilon>0$. Similarly, a noise estimator $\hat\sigma^2$ satisfies the weak regularity conditions for a weak spiked eigenvalue estimator  $\hat\Gamma$ if it is a $C^2(H_p(\mathbb{R});\mathbb{R})$ function based on $l_1,...,l_p$ such that for each $1\leq k\leq p$,
\begin{align*}&
\E{\left|\frac{\hat\sigma^2}{l_k}\right|^{9(1+\epsilon)}}\!\!,\,
\E{\left|\frac{\partial \hat\sigma^2}{\partial l_k}\right|^{4.5}}
\text{ and }
\E{\vphantom{\Bigg\vert}\left|\hat\gamma_{k}+\hat\sigma^2\right|\left|\frac{\partial^2\hat\gamma_{k}}{\partial l_k^2}+\frac{\partial^2\hat\sigma^2}{\partial l_k^2}\right|}
\end{align*}
are all finite for some $\epsilon>0$.
\end{defn}

The previous conditions assert integrability of quantities associated with the estimators for a given $p$, and are dimension dependent. In contrast, the following conditions assert similar integrability as $p$ grows.  To make the dependence explicit, we superscript the dimension. 

\begin{defn}\label{defn:SRC} A spiked eigenvalue estimator $\hat\Gamma^p$ satisfies the strong regularity conditions if it satisfies the weak regularity conditions for each $p>0$, and moreover
{\setlength{\mathindent}{5pt}\begin{align*}&
\E{\sup_{p>0}\max_{\substack{1\leq k\leq p}}\left|\frac{\hat\gamma^p_k}{l^p_k}\right|^{9(1+\epsilon)}}\!\!,\,
\E{\sup_{p>0}\max_{\substack{1\leq k\leq p}}\left|\frac{\partial \hat\gamma^p_k}{\partial l^p_k}\right|^{4.5}}\!\!,\,
\E{\sup_{p>0}\max_{\substack{1\leq k\leq p}}\left|\hat\gamma_k^p\frac{\partial^2 \hat\gamma^p_k}{\partial {l^p_k}^2}\right|}\!\!,
\\&
\E{\sup_{p>0}\max_{\substack{1\leq k\neq b\leq \hat\rho}}\left|\frac{\hat\gamma^p_k-\hat\gamma^p_b}{l^p_k-l^p_b}\right|^{2}}\!\!,\,
\E{\sup_{p>0}\max_{\substack{1\leq k\neq b\\ \neq e\leq\hat\rho}}\left|\frac{l^p_k}{l^p_k-l^p_b}\right|^{2}\left|\frac{\hat\gamma^p_k-\hat\gamma^p_e}{l^p_k-l^p_e}-\frac{\hat\gamma^p_r-\hat\gamma^p_e}{l^p_b-l^p_e}\right|^{2}}\!\!
\\&\qquad\text{ and }
\E{\sup_{p>0}\max_{\substack{1\leq k\neq b\leq\hat\rho<e\leq p}}\left|\frac{l^p_k}{l^p_k-l^p_b}\right|^{2}\left|\frac{\hat\gamma^p_k}{l^p_k-l^p_e}-\frac{\hat\gamma^p_b}{l^p_b-l^p_e}\right|^{2}}
\end{align*}}%
are all finite for some $\epsilon>0$. Similarly, a noise estimator $\hat\sigma^2$ satisfies the strong regularity conditions for a strong spiked eigenvalue estimator  $\hat\Gamma$ if it satisfies the weak, and the following holds:
{\setlength{\mathindent}{5pt}\begin{align*}&
\E{\sup_{p>0}\max_{0\leq k\leq p}\left|\frac{\hat\sigma^{2p}}{l^p_k}\right|^{9(1+\epsilon)}}\!\!,\,
\E{\sup_{p>0}\max_{0\leq k\leq p}\left|\frac{\partial \hat\sigma^{2p}}{\partial l^p_k}\right|^{4.5}}\!\!
\\&\qquad\text{ and }
\E{\sup_{p>0}\max_{1\leq k\leq p}\vphantom{\Bigg\vert}\left|\hat\gamma^p_{k}+\hat\sigma^{2p}\right|\left|\frac{\partial^2\hat\gamma^p_{k}}{\partial l_k^{p2}}+\frac{\partial^2\hat\sigma^{2p}}{\partial l_k^{p2}}\right|}
\end{align*}}%
are all finite for some $\epsilon>0$.
\end{defn}

Careful inspection of the proofs reveal that regularity conditions in this spirit are inevitable; however, we emphasize that by no means we believe those precise conditions to be necessary, merely sufficient. In any case, with these conditions in hand we can formally define the classes of estimators in which we look for an optimal $\hat\sigma^2$.

\begin{defn} For $\hat\Gamma$ a weak (strong) spiked eigenvalue estimator, the associated weak (strong) class of spiked corrections to the sample covariance matrix is
{\setlength{\mathindent}{5pt}\begin{align*}&
V_{p}(\hat\Gamma)\!=\!\left\{\hat\Gamma\!+\!\hat\sigma^2I_p
\,\bigg\vert\,\hat\sigma^2\text{ is }\hat\Gamma\text{-weak}\right\}
\,\text{and}\;\;
\bar V_p(\hat\Gamma)\!=\!\left\{\hat\Gamma\!+\!\hat\sigma^2I_p
\,\bigg\vert\,\hat\sigma^2\text{ is }\hat\Gamma\text{-strong}\right\}\!.
\end{align*}}%
\end{defn}

We would like to find an optimal estimator over these two classes. Recall we are evaluating performance in Frobenius loss $L_F(\hat\Sigma,\Sigma)=\|\hat\Sigma-\Sigma\|_F^2/p$. Although natural and common within the literature, we find it more convenient to move to the closely related ``invariant'' loss
\begin{align*}
L_H(\hat\Sigma,\Sigma)=\frac{\|\hat\Sigma\Sigma^{-1}-I\|_F^2}{p}.
\end{align*}
This loss was, up to the high-dimensional $p^{-1}$ normalization, mentioned by \cite{JamesStein61} early but first thoroughly investigated by \cite{Haff77}, and we will refer to it as Haff's loss. A modification of the argument behind \eqref{eq:I-dual} shows that estimation of a spiked covariance matrix under this loss can also be thought as a noise estimation problem, just like for the Frobenius case. In this sense the problem stays similar.

A great advantage of the Haff loss is that it is one of the few for which an unbiased estimator of the risk is known, at least in the orthogonally invariant case. There is a rich body of literature behind that construction \citep{Haff77,Haff79,Haff80}, in different shapes and under different conditions. A remarkable feature is that if we collect and split the terms of the URE between the terms of leading and smaller order, the dominant part does not depend on the derivatives of the eigenvalue estimators. More precisely, we have this construction.

\begin{thm}\label{thm:URE} Let $n\geq p+1$. Then for any weak spiked estimator $\hat\Sigma\in V_p(\hat\Gamma)$ whose spiked rank $\hat\rho$ is independent of $S$, we find its Haff risk to satisfy
$\text{E}\big[L_H(\hat\Sigma,\Sigma)\big]=\text{E}\big[F+G\big]$ with $\text{E}\big[|F+G|\big]<\infty$,
where 
{\setlength{\abovedisplayskip}{5pt}
\setlength{\belowdisplayskip}{10pt}
\setlength{\mathindent}{10pt}\begin{align*}&
F=F\Big(l,\hat\rho,\hat\gamma,\hat\sigma^2\Big)
\;\;\text{ and }\;\;
G=G\Big(l,\hat\rho,\hat\gamma,\hat\sigma^2,\frac{\partial\hat\gamma}{\partial l},\frac{\partial\hat\sigma^2}{\partial l},\frac{\partial^2\hat\gamma}{\partial l^2},\frac{\partial^2\hat\sigma^2}{\partial l^2}\Big)
\end{align*}}%
are functionals that do not depend on $\Sigma$. In addition, if the estimator is strong in the sense that $\hat\Sigma\in\bar V_p(\hat\Gamma)$, then asymptotically $F$ is the dominant term and $G$ the dominated term, respectively, in the sense that
\begin{align*}&
\lim_{n\rightarrow\infty}\;\E{\vphantom{\Big\vert}\!\left|F\right|}<\infty
\;\text{ and }\;
\lim_{n\rightarrow\infty}\;p\E{\vphantom{\Big\vert}\!\left|G\right|}<\infty.
\end{align*}
Explicit expressions for $F$ and $G$ are given in (\ref{eq:CF-ureF})--(\ref{eq:CF-ureG}).
\end{thm}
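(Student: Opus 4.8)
The plan is to start from the expansion
\begin{align*}
p\,L_H(\hat\Sigma,\Sigma)=\tr{\hat\Sigma\Sigma^{-1}\hat\Sigma\Sigma^{-1}}-2\tr{\hat\Sigma\Sigma^{-1}}+p
\end{align*}
and to remove every occurrence of $\Sigma^{-1}$ using the Haff--Stein integration-by-parts identity for the Wishart distribution. Writing $nS\sim W_p(n,\Sigma_p)$ and $\mathcal D$ for the matrix differential operator with entries $\tfrac12(1+\delta_{ij})\partial/\partial S_{ij}$, this identity reads
\begin{align*}
\E{\tr{\Sigma^{-1}\Psi}}=\E{\tfrac{n-p-1}{n}\tr{S^{-1}\Psi}+\tfrac{2}{n}\tr{\mathcal D\Psi}},
\end{align*}
and, as one sees directly from integration by parts in the Wishart density, it remains valid when $\Psi$ depends on $\Sigma$ as well, provided $\mathcal D$ differentiates only the explicit $S$-dependence. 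The hypothesis $n\ge p+1$ makes the density exponent $(n-p-1)/2$ nonnegative, so the boundary terms at the origin vanish, and the weak conditions of Definition~\ref{defn:WRC} guarantee that the integrals involved are absolutely convergent (so Fubini applies and every manipulation is legitimate). One application handles the linear trace; for the quadratic trace one peels off the two copies of $\Sigma^{-1}$ one at a time --- first with $\Psi=\hat\Sigma\Sigma^{-1}\hat\Sigma$, then again on the resulting pieces $\tr{S^{-1}\hat\Sigma\Sigma^{-1}\hat\Sigma}=\tr{\Sigma^{-1}\hat\Sigma S^{-1}\hat\Sigma}$ and $\tr{\mathcal D(\hat\Sigma\Sigma^{-1}\hat\Sigma)}$ (where the Leibniz rule spawns further $\Sigma^{-1}$-traces). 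Since $\Sigma^{-1}$ appears at most twice, iterating the identity incurs at most two levels of differentiation, which is exactly why second derivatives of $\hat\gamma$ and $\hat\sigma^2$ enter. The outcome is a function of $S$ alone with finite expectation, i.e. $\E{L_H(\hat\Sigma,\Sigma)}=\E{F+G}$ with $\E{|F+G|}<\infty$.

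Next I would pass to spectral coordinates $S=OLO'$, $\hat\Sigma=O\diag{\phi}O'$ with $\phi_k=\hat\gamma_k\1{k\le\hat\rho}+\hat\sigma^2$, integrate out the eigenvectors by orthogonal invariance, and rewrite $\tr{\mathcal D(Of(L)O')}$ through the classical Stein--Haff eigenvalue formula, as a combination of the derivatives $\partial f_i/\partial l_i$ and of the divided differences $(f_i-f_j)/(l_i-l_j)$. Substituting the spiked form of $\phi$ and doing the bookkeeping produces the explicit expressions \eqref{eq:CF-ureF}--\eqref{eq:CF-ureG}. The structural point is that $\phi_k$ equals the single scalar $\hat\sigma^2$ on all $p-\hat\rho$ ``bulk'' indices --- here the independence of $\hat\rho$ from $S$ is used, so that $\hat\gamma_k\equiv0$ for $k>\hat\rho$ --- whence $(\phi_i-\phi_j)/(l_i-l_j)=0$ for any pair of bulk indices: every divided-difference term is thus supported on the $\hat\rho=O(1)$ spiked indices; every term carrying a derivative $\partial\hat\gamma/\partial l$, $\partial\hat\sigma^2/\partial l$, $\partial^2\hat\gamma/\partial l^2$ or $\partial^2\hat\sigma^2/\partial l^2$ inherits an extra factor $n^{-1}$ from the $\tfrac{2}{n}\tr{\mathcal D\cdot}$ part of the identity; and what remains reduces to bulk averages $\tfrac1p\sum_{k>\hat\rho}\hat\sigma^2/l_k$, $\tfrac1p\sum_{k>\hat\rho}\hat\sigma^4/l_k^2$ and the constant $+p/p$. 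I would define $F$ to be the collection of the latter (a functional of $(l,\hat\rho,\hat\gamma,\hat\sigma^2)$ only, as claimed) and $G$ to be everything else.

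For the asymptotic orders I would bound $\E{|F|}$ and $p\,\E{|G|}$ uniformly in $n$. Each summand of $F$ is, up to bounded deterministic prefactors such as $\tfrac{n-p-1}{n}$, a bulk average of $l_k^{-1}$ or $l_k^{-2}$ times a power of $\hat\sigma^2$; by \eqref{eq:l-limit} the empirical spectral distribution of $S$ converges to $\sigma^2\text{MP}(c)$, whose support is bounded away from $0$ since $c<1$, so these averages converge a.s. to finite integrals, and the strong conditions on $\hat\sigma^{2p}/l_k^p$ in Definition~\ref{defn:SRC} supply the uniform integrability needed to pass the bound through the expectation, giving $\sup_n\E{|F|}<\infty$. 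For $G$: the spiked-index terms are sums of $O(\hat\rho)=O(1)$ summands, each bounded via Hölder using the ratio and divided-difference conditions of Definition~\ref{defn:SRC}, then divided by $p$ --- hence $O(1/p)$; the derivative and divided-difference terms, being sums of at most $O(p)$ bounded summands multiplied by the extra $n^{-1}$ and the $p^{-1}$ normalization of the loss, are also $O(1/p)$, where the powers $9(1+\epsilon)$, $4.5$ and the second-derivative bounds of Definition~\ref{defn:SRC} are precisely what closes the Hölder estimates (and, with uniform integrability, lets them pass to the expectation). This yields $\sup_n p\,\E{|G|}<\infty$.

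The main obstacle will not be any single step but the combinatorial volume of terms generated by the nested applications of the Wishart identity and the Stein--Haff reduction, together with checking that every one of them falls into the claimed dichotomy --- in particular that no derivative term leaks into the $O(1)$ part $F$, and that the moment exponents fixed in Definitions~\ref{defn:WRC}--\ref{defn:SRC} are exactly strong enough for the worst product of ratios $\hat\gamma_k/l_k$, derivatives $\partial\hat\gamma_k/\partial l_k$, eigenvalue-gap factors $(l_i-l_j)^{-1}$ and negative moments of the $l_k$ that appears among those terms. Controlling the $S^{-1}$ negative moments (legitimate because $n-p-1$ grows linearly) and the gap denominators uniformly in $p$ is the technically delicate heart of that accounting.
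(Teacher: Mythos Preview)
Your overall strategy --- two nested Stein--Haff identities to eliminate $\Sigma^{-1}$, then bookkeeping in eigenvalue coordinates --- is the paper's. Two differences in execution are worth flagging.

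The paper does not go through the matrix-level identity $\E{\tr{\Sigma^{-1}\Psi}}=\E{\tfrac{n-p-1}{n}\tr{S^{-1}\Psi}+\tfrac2n\tr{\mathcal D\Psi}}$ and then reduce; it proves the eigenvalue form directly (Lemmas~\ref{lem:KONN}--\ref{lem:KONN2}) by writing $nS=X'X$, computing $\partial l_k/\partial X_{ij}$ and $\partial O_{kl}/\partial X_{ij}$ explicitly, and applying a divergence identity at the $X$-level. To close the H\"older bounds on the resulting expressions it uses a separate moment lemma (Lemma~\ref{lem:BDFC}) showing that $\E{l_k^{-m}}$, $\E{|l_k-l_b|^{-m}}$ and $\E{|l_k-l_b|^{-m}|l_k-l_e|^{-m}}$ are finite for suitable $m$. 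The Mar\v cenko--Pastur argument you sketch gives almost-sure limits, not these fixed-$n$ integrability statements, so on your route you would still need an analogue of that lemma to justify the weak-regularity H\"older estimates.

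More substantively, your $F/G$ split is not the paper's. You place only the pure bulk averages in $F$ and push all divided-difference and spiked-index terms into $G$. The paper instead puts \emph{every} derivative-free term in $F$, including the $\hat\gamma$-dependent pieces such as $\tfrac{n-p-1}{n^2p}\sum_{k\ne b}^{\hat\rho}\tfrac{\hat\gamma_k}{l_k}\tfrac{\hat\gamma_k-\hat\gamma_b}{l_k-l_b}$ and the triple sums over one spiked and two bulk indices; $G$ is exactly the collection of terms carrying $\partial\hat\gamma/\partial l$, $\partial\hat\sigma^2/\partial l$ or second derivatives. Both splits satisfy the order claims of the theorem, but only the paper's produces the explicit formulas \eqref{eq:CF-ureF}--\eqref{eq:CF-ureG} referenced in the statement, and the distinction matters downstream: Proposition~\ref{prop:MINM} minimizes $\E{F}$ over $\hat\sigma^2$, and the $\hat\gamma$-dependent terms in $F$ are precisely what make the resulting $\tilde\sigma^2=A/B$ depend on the chosen $\hat\Gamma$.
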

In practice, we found the decomposition most useful for a deterministic spiked rank $\hat\rho=r$, in which case we might consider estimates of the form $\hat\Gamma_r+\sigma_r^2I$; this is the approach we take in Section \ref{sec:A} when constructing a specific estimator. But it is reasonable to think of a context in which some estimate of the true rank $\rho$ based on prior independent data is available, in which case the construction applies equally.

Now fix a weak spiked eigenvalue estimator $\hat\Gamma$, and consider the task of finding a $\hat\sigma^2$ that minimizes the Haff risk: minimizing the  construction from Theorem \ref{thm:URE} makes the task plausible. Since $G$ depends on the derivatives of $\hat\sigma^2$, formally proceeding with calculus of variations would yield a minimizer that depends on the unknown density of the eigenvalues which is, of course, unavailable. However since the dominant part only depends on $\hat\sigma^2$ itself, one can obtain a minimizer of $\E{F}$ whose expression is independent of $\Sigma$.

\begin{prop}\label{prop:MINM} Let $n\geq p$. If $\hat\Sigma\mapsto\E{F}$ has a minimum over $V_p(\hat\Gamma)$, it is given by $\tilde\Sigma=\hat\Gamma+\tilde\sigma^2I$, where $\tilde\sigma^2=A/B$ where
{\setlength{\mathindent}{5pt}\begin{align*}
&A=\;\frac{n-p-1}{np}\sum_{c=1}^{p}\frac{1}{l_c}
-\frac{(n-p-1)(n-p-2)}{n^2p}\sum_{k=1}^{\hat\rho}\frac{\hat\gamma_k}{l_k^2}
\\&\qquad
+\frac{n-p-1}{n^2p}\sum_{k=1}^{\hat\rho}\sum_{c=1}^p\frac{\hat\gamma_k}{l_k}\frac{1}{l_c}
-2\frac{n-p-1}{n^2p}\sum_{k=1}^{\hat\rho}\sum_{c={\hat\rho}+1}^p\frac{1}{l_c}\frac{\hat\gamma_k}{l_k-l_c}
\\&\qquad
+\frac3{n^2p}\sum_{k\neq b}^{\hat\rho}\sum_{c={\hat\rho}+1}^p\frac{1}{l_k-l_c}\frac{\hat\gamma_b}{l_b-l_c}
-\frac3{n^2p}\sum_{k=1}^{\hat\rho}\sum_{c\neq d={\hat\rho}+1}^p\frac{1}{l_k-l_c}\frac{\hat\gamma_k}{l_k-l_d}
\\&\qquad
-\frac3{n^2p}\sum_{k\neq b}^{\hat\rho}\sum_{c={\hat\rho}+1}^p\frac{\hat\gamma_k-\hat\gamma_b}{l_k-l_b}\frac{1}{l_k-l_c},
\\
&B=\;
\frac{(n-p-1)(n-p-2)}{n^2p}\sum_{c=1}^{p}\frac{1}{l_c^2}
-\frac{n-p-1}{n^2p}\sum_{c=1}^p\frac{1}{l_c}\sum_{c=1}^p\frac{1}{l_c}.
\end{align*}}%
In addition, if $n\geq 2p+2$, a minimum must exist (and therefore at $\tilde\Sigma$).
\end{prop}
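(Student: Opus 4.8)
The plan is to exploit that, by Theorem~\ref{thm:URE}, $F=F(l,\hat\rho,\hat\gamma,\hat\sigma^2)$ carries no derivatives of $\hat\sigma^2$, so that --- the ``calculus of variations'' framing notwithstanding --- minimizing $\E{F}$ over $V_p(\hat\Gamma)$ is essentially a pointwise-in-$l$ problem. First I would read off from the explicit formula~(\ref{eq:CF-ureF}) that, with $\hat\Gamma$ (hence $\hat\rho$ and $\hat\gamma$) held fixed, $F$ is a quadratic polynomial in $\hat\sigma^2$,
\begin{align*}
F \;=\; B\,(\hat\sigma^2)^2 \;-\; 2A\,\hat\sigma^2 \;+\; C,
\end{align*}
in which the coefficient $B=B(l)$ of $(\hat\sigma^2)^2$ and the coefficient $A=A(l,\hat\rho,\hat\gamma)$ of $-2\hat\sigma^2$ are exactly the two functionals displayed in the statement, while $C=C(l,\hat\rho,\hat\gamma)$ does not involve $\hat\sigma^2$. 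This identification is pure bookkeeping: $\hat\sigma^2$ enters $\hat\Sigma=\hat\Gamma+\hat\sigma^2 I$ linearly and hence $L_H$ quadratically, and the derivative-bearing corrections produced by the Haff integration-by-parts identities all land in $G$, leaving $F$ quadratic with $\Sigma$-free random coefficients. The weak regularity of $\hat\sigma^2$ guarantees $\E{|F|}<\infty$, so $\E{F}$ is well-defined on $V_p(\hat\Gamma)$.

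For the conditional statement, suppose $\hat\Sigma^{\ast}=\hat\Gamma+\hat\sigma^{2\ast}I\in V_p(\hat\Gamma)$ minimizes $\E{F}$. For any bounded $\eta\in C^2(H_p(\mathbb{R});\mathbb{R})$ with bounded first and second derivatives, $\hat\sigma^{2\ast}+t\eta$ remains $\hat\Gamma$-weak for every $t\in\mathbb{R}$ --- the moments of Definition~\ref{defn:WRC} being stable under adding such a perturbation --- so $t\mapsto\E{F(\hat\sigma^{2\ast}+t\eta)}$ is a polynomial in $t$ with finite coefficients, minimized at $t=0$. Differentiating under the expectation (licensed by those same moments together with Cauchy--Schwarz) yields the first-order condition $\E{(B\hat\sigma^{2\ast}-A)\,\eta}=0$ for all such $\eta$; since $B\hat\sigma^{2\ast}-A\in L^1$, the fundamental lemma of the calculus of variations forces $B\hat\sigma^{2\ast}=A$ almost surely. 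As the Wishart eigenvalue density is continuous with $l_1>\dots>l_p$ a.s., the event $\{B=0\}$ is null once $n\geq p+2$ (and on it $A=0$ too, so $F$ is free of $\hat\sigma^2$ there), whence $\hat\sigma^{2\ast}=A/B=\tilde\sigma^2$ a.s.\ and $\hat\Sigma^{\ast}=\tilde\Sigma$.

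For existence under $n\geq 2p+2$, the key step is the strict positivity of $B$. Using the algebraic identity
\begin{align*}
\frac{n^2 p}{\,n-p-1\,}\,B
&= (n-p-2)\sum_{c=1}^{p}\frac{1}{l_c^2}-\Bigl(\sum_{c=1}^{p}\frac{1}{l_c}\Bigr)^{\!2} \\
&= (n-2p-2)\sum_{c=1}^{p}\frac{1}{l_c^2}+\sum_{1\le c<d\le p}\Bigl(\frac{1}{l_c}-\frac{1}{l_d}\Bigr)^{\!2},
\end{align*}
one sees that $n\geq 2p+2$ makes both terms on the second line nonnegative and the latter strictly positive a.s.\ (the $l_c$ being a.s.\ distinct), so $B>0$ a.s. Completing the square gives $F=B(\hat\sigma^2-A/B)^2+(C-A^2/B)$ with the last term free of $\hat\sigma^2$, so $F(\tilde\sigma^2)\le F(\hat\sigma^2)$ pointwise and hence $\E{F(\tilde\sigma^2)}\le\E{F(\hat\sigma^2)}$ for every $\hat\Gamma$-weak $\hat\sigma^2$. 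It then remains only to verify $\tilde\Sigma\in V_p(\hat\Gamma)$, i.e.\ that $\tilde\sigma^2=A/B$ is itself $\hat\Gamma$-weak; granted this, $\E{F(\tilde\sigma^2)}$ is finite and, by the conditional part, $\tilde\Sigma$ is the unique minimizer.

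I expect this last admissibility check to be the main obstacle. The ratio $\tilde\sigma^2=A/B$ inherits the potential singularities of $A$ and $B$ at the boundary faces of $H_p(\mathbb{R})$ (from the $1/l_c$ and $1/l_c^2$ terms) and, through $A$, along the diagonals $l_k=l_c$. One controls these by noting that $A$ is of strictly lower singularity order than $B$ at the faces --- so $\tilde\sigma^2$ stays bounded there, with the clean bound $B\ge\frac{n-p-1}{n^2p}\sum_c l_c^{-2}$ available once $n\geq 2p+3$ --- and that the diagonal contributions to $A$ enter either as bounded divided differences $(\hat\gamma_k-\hat\gamma_b)/(l_k-l_b)$ or with enough cancellation to be absorbed against the Vandermonde factor $\prod_{c<d}|l_c-l_d|$ and the inverse moments already present in the Wishart eigenvalue law, using term-by-term differentiation and the moment hypotheses on $\hat\gamma$ in Definition~\ref{defn:WRC}. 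The borderline $n=2p+2$, where the lower bound on $B$ degenerates and the level-repulsion estimate must be used in its sharpest form near the measure-zero locus where the $l_c$ coalesce, is the delicate point; the rest --- the quadratic reduction of $F$ and the first-variation characterization --- is routine.
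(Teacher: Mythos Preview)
Your approach is essentially the paper's own: the first-variation argument with smooth compactly supported test functions and the fundamental lemma for the conditional characterization, and nonnegativity of $B$ under $n\ge 2p+2$ for existence. The only cosmetic differences are that you make the quadratic structure $F=B(\hat\sigma^2)^2-2A\hat\sigma^2+C$ explicit and complete the square, whereas the paper phrases the same computation as $H'(0)=0$, $H''(t)\ge 0$ and Taylor/integration-by-parts; and you prove $B>0$ via the identity $(n-p-2)\sum l_c^{-2}-(\sum l_c^{-1})^2=(n-2p-2)\sum l_c^{-2}+\sum_{c<d}(l_c^{-1}-l_d^{-1})^2$, while the paper reaches the equivalent bound $\ge\frac{n-2p-2}{p}(\sum l_c^{-1})^2$ by Jensen. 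You are also more scrupulous than the paper in flagging that $\tilde\sigma^2=A/B$ must itself be shown $\hat\Gamma$-weak for $\tilde\Sigma\in V_p(\hat\Gamma)$; the paper's proof simply uses $\tilde\sigma^2$ as one endpoint of the convex interpolation without verifying this, so the ``main obstacle'' you identify is a genuine gap that the paper leaves implicit rather than something you are missing.
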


The matter of whether a minimizer should exist at all in a given context is delicate. A proof of existence for some large class of covariance matrices would be quite interesting. In the spiked case, the remarks following Lemma \ref{lem:LLN} hint at a plausible approach.

\section{Properties}\label{sec:P}

The previous chapter was concerned with the construction of a good estimator $\tilde\sigma^2$ that satisfies some optimality property, namely minimizing the dominant part of the Haff URE over $V_p(\hat\Gamma)$. Let us now turn our attention to its performance in estimating $\sigma^2$ under spikedness. We will make repeated use of the following lemma, which extends the results of \cite{Nadler08}. 

\begin{lem}\label{lem:LLN} Suppose the underlying sequence of covariance matrices $\{\Sigma_p\}$ is spiked and $p_n/n\rightarrow c\in(0,1)$.
\begin{enumerate}[(i)]
\item If $\gamma_\rho/\sigma^2>\sqrt{c}$, then for any $1\leq k\leq \rho$,
\[
\frac1{p-\rho}\sum_{c=\rho+1}^p\frac{l_c}{l_k-l_c}\xrightarrow[n\rightarrow\infty]{\text{a.s.}}\frac{\sigma^2}{\gamma_k};
\]
\item For any $m>1$,
\[\frac1{p-\rho}\sum_{c=\rho+1}^p\frac{1}{l_c^m}\xrightarrow[n\rightarrow\infty]{\text{a.s.}}\frac1{(1-c)^{2m-1}}\frac1{\sigma^{2m}}.\]
\end{enumerate}
\end{lem}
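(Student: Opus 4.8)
The plan is to reduce both statements to convergence of linear spectral statistics of the noise part of the spectrum, and then to identify the limits through known results about the Mar\v{c}enko--Pastur law. Recall from \eqref{eq:l-limit} and the surrounding discussion that under spikedness the $\rho$ top eigenvalues $l_1,\dots,l_\rho$ converge a.s. to the finite deterministic limits $\ell_k := \left(1+c\sigma^2\frac{\gamma_k+\sigma^2}{\gamma_k}\right)(\gamma_k+\sigma^2)$ when $\gamma_k > \sqrt{c}\,\sigma^2$, while the empirical distribution of $l_{\rho+1},\dots,l_p$ converges a.s. weakly to the scaled Mar\v{c}enko--Pastur law $\sigma^2\mathrm{MP}(c)$. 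Since $\rho$ is fixed, removing or adding finitely many eigenvalues does not affect weak limits of empirical measures, so the empirical distribution of $l_1,\dots,l_p$ has the same limit. The supercriticality hypothesis in (i) guarantees $\ell_k$ is strictly larger than the right edge $(1+\sqrt{c})^2\sigma^2$ of the bulk, so the test functions below are bounded and continuous on a neighborhood of the support.

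For part (ii), I would write $\frac{1}{p-\rho}\sum_{c=\rho+1}^p l_c^{-m} = \int x^{-m}\, dF_{p-\rho}(x)$ where $F_{p-\rho}$ is the empirical spectral distribution of the bulk eigenvalues. The function $x\mapsto x^{-m}$ is bounded and continuous on any interval bounded away from $0$; since $c\in(0,1)$ the smallest eigenvalue $l_p$ is a.s. bounded below by a positive constant (the left edge $(1-\sqrt{c})^2\sigma^2 > 0$) for all large $n$, by the Bai--Yin theorem. Hence weak convergence plus this uniform lower bound gives $\frac{1}{p-\rho}\sum_{c=\rho+1}^p l_c^{-m}\to \int x^{-m}\, d\mu_{\sigma^2\mathrm{MP}(c)}(x)$ a.s. The remaining task is to evaluate this moment of the scaled Mar\v{c}enko--Pastur law; the claimed value $\frac{1}{(1-c)^{2m-1}}\frac{1}{\sigma^{2m}}$ follows from the standard formula for negative moments of $\mathrm{MP}(c)$, most cleanly extracted by a contour-integral / Stieltjes-transform computation or by recognizing $\int x^{-m}\,d\mu_{\mathrm{MP}(c)}$ as related to the moments of the companion (inverse Wishart-type) law.

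For part (i), I would write $\frac{1}{p-\rho}\sum_{c=\rho+1}^p \frac{l_c}{l_k - l_c}$ and use that $l_k \to \ell_k$ a.s. with $\ell_k$ strictly above the bulk edge. On the event that $l_k$ is within $\epsilon$ of $\ell_k$ and all bulk eigenvalues lie within the compact support neighborhood, the function $x\mapsto \frac{x}{\ell_k - x}$ (and its perturbations with $\ell_k$ replaced by $l_k$) is bounded and uniformly continuous, so a standard ``replace the random parameter by its limit'' argument combined with weak convergence of the bulk ESD yields $\frac{1}{p-\rho}\sum_{c=\rho+1}^p \frac{l_c}{l_k-l_c}\to \int \frac{x}{\ell_k - x}\, d\mu_{\sigma^2\mathrm{MP}(c)}(x)$ a.s. It then remains to verify the algebraic identity $\int \frac{x}{\ell_k - x}\, d\mu_{\sigma^2\mathrm{MP}(c)}(x) = \sigma^2/\gamma_k$. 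This is exactly the self-consistent equation characterizing the spiked limit $\ell_k$: substituting $\ell_k = \left(1+c\sigma^2\frac{\gamma_k+\sigma^2}{\gamma_k}\right)(\gamma_k+\sigma^2)$ into the Stieltjes transform of $\sigma^2\mathrm{MP}(c)$ and simplifying reproduces precisely $\sigma^2/\gamma_k$, which is the mechanism by which Baik--Silverstein and Paul derived \eqref{eq:l-limit} in the first place; I would cite \cite{BaikSilverstein06,Paul07,Nadler08} and carry out the substitution.

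The main obstacle is the bookkeeping needed to pass from weak convergence of empirical spectral distributions to convergence of these specific (unbounded-looking, or parameter-dependent) integrals rigorously: one must justify uniform control near $x=0$ in (ii) via the hard-edge bound $l_p \ge (1-\sqrt c)^2\sigma^2 - o(1)$ a.s., and in (i) one must handle the fact that the test function depends on the random quantity $l_k$ through a careful $\epsilon$-argument exploiting that $\ell_k$ is separated from the bulk. The limiting-integral evaluations themselves are routine Mar\v{c}enko--Pastur computations once the convergence is in place.
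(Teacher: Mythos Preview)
Your proposal is correct and follows essentially the same route as the paper's proof: weak convergence of the bulk empirical spectral distribution to the scaled Mar\v cenko--Pastur law (via \cite{Nadler08}), combined with the a.s.\ edge behavior $l_p\to(1-\sqrt c)^2\sigma^2$ and $l_k\to\ell_k$ from \cite{BaikSilverstein06}, and then the portmanteau theorem applied to the bounded continuous test functions $t\mapsto t^{-m}$ and $t\mapsto t/(\ell_k-t)$ on the relevant compact. The only cosmetic difference is that the paper carries out the ``replace $l_k$ by its limit $\ell_k$'' step via the explicit algebraic decomposition $\frac{l_c}{l_k-l_c}=\frac{(\bar l_k-l_k)l_c}{(l_k-l_c)(\bar l_k-l_c)}+\frac{l_c}{\bar l_k-l_c}$ and then identifies the limiting integral with $\sigma^2/\gamma_k$ by directly solving the quadratic $\gamma_k^2-[\bar l_k-(c+1)\sigma^2]\gamma_k+c\sigma^4=0$, whereas you phrase the same computation as invoking the self-consistent equation underlying \eqref{eq:l-limit}; these are the same calculation.
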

The supercriticality assumption $\gamma_\rho/\sigma^2>\sqrt{c}$ in (i) is necessary for the expression to converge. Two remarks are in order. First, as a consequence of this result, it is easy to show that
\begin{align*}&
\frac1{p-\rho}\sum_{c=\rho+1}^p\frac{1}{l_k-l_c}\xrightarrow[n\rightarrow\infty]{\text{a.s.}}\frac1{\gamma_k+c\sigma^2},
\\&
\frac1{p-\rho}\sum_{c=\rho+1}^p\frac{1}{l_c(l_k-l_c)}\xrightarrow[n\rightarrow\infty]{\text{a.s.}}\frac1{1-c}\frac{\gamma_k}{\sigma^2(\gamma_k+c\sigma^2)^2},
\end{align*}
a result we will use later in Section \ref{sec:A}. Second, in connection with the proof of Proposition \ref{prop:MINM}, we see that when the underlying sequence of covariance matrices $\{\Sigma_p\}$ is spiked
\begin{align*}
\frac1{n^2}\left[(n-p-2)\sum_{c=1}^{p}\frac{1}{l_c^2}
-\left(\sum_{c=1}^p\frac{1}{l_c}\right)^2\right] \xrightarrow[n\rightarrow\infty]{\text{a.s.}}\frac{c}{(1-c)\sigma^4}>0,
\end{align*}
by Lemma \ref{lem:LLN}. From (\ref{eq:MINM-secder}), one would therefore expect the estimator also to be a minimizer under spikedness. Although we haven't been successful in formalizing this intuition, this could be a plausible approach towards proving existence of minimizers for spiked covariance matrices.

Let us now turn our attention to the behavior of $\tilde\sigma^2$. The following theorem summarizes important aspects of its asymptotic behavior.

\begin{thm}\label{thm:NORM} Suppose the underlying sequence of covariance matrices $\{\Sigma_p\}$ is spiked and $p_n/n\rightarrow c\in(0,1)$ with $\gamma_\rho/\sigma^2>\sqrt{c}$. For a given weak $\hat\Gamma$, let $\tilde\sigma^2$ be the associated minimizer of Proposition \ref{prop:MINM}. Then
\begin{enumerate}[(i)]
\item\label{eq:NORM-1} If $\hat\rho$ a.s. converges to a finite constant, then
$\tilde\sigma^2\xrightarrow[n\rightarrow\infty]{\text{a.s.}}\sigma^2$.
\item\label{eq:NORM-2} If $\hat\rho$ is strongly consistent and for all $1\leq k\leq\rho$, $\hat\gamma_k$ a.s. converges to some finite constant, then we have bounds $X^-_n\leq n(\tilde\sigma^2-\sigma^2)\leq X_n^+$ with
{\setlength{\mathindent}{5pt}\begin{align*}&
X^-_n\xrightarrow[n\rightarrow\infty]{\mathcal{D}}\text{N}\left(\mu^-,\frac{2c(1+c)^2}{(1-c)^4}\sigma^4\right)\!,
\quad
X^+_n\xrightarrow[n\rightarrow\infty]{\mathcal{D}}\text{N}\left(\mu^+,\frac{2c(1+c)^2}{(1-c)^4}\sigma^4\right)\!,
\end{align*}}%
where $\mu^-$ and $\mu^+$ have explicit expressions given in (\ref{eq:NORM-mu+})--(\ref{eq:NORM-mu-}).
\end{enumerate}
\end{thm}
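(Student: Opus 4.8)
\textbf{Proof plan for Theorem \ref{thm:NORM}.}
The strategy is to substitute the almost-sure limits from Lemma \ref{lem:LLN} (and its corollaries) directly into the closed-form expression $\tilde\sigma^2 = A/B$ from Proposition \ref{prop:MINM}, treating $A$ and $B$ term by term. For part (\ref{eq:NORM-1}), I would first handle $B$: since $B = \frac{(n-p-1)(n-p-2)}{n^2 p}\sum_{c=1}^p l_c^{-2} - \frac{n-p-1}{n^2 p}\big(\sum_{c=1}^p l_c^{-1}\big)^2$, and the $\rho$ spiked terms contribute $O(1/p) \to 0$ to each sum after the $(p-\rho)^{-1}$ normalization is accounted for, Lemma \ref{lem:LLN}(ii) (with $m=1,2$) gives $\frac{1}{p}\sum_c l_c^{-2} \to \frac{1}{(1-c)^3\sigma^4}$ and $\frac{1}{p}\sum_c l_c^{-1} \to \frac{1}{(1-c)\sigma^2}$; combined with $(n-p-1)(n-p-2)/n^2 \to (1-c)^2$ and $p(n-p-1)/n^2 \to c(1-c)$, this yields $B \to \frac{c}{(1-c)\sigma^4}$, exactly the positive constant flagged in the remark after Lemma \ref{lem:LLN}. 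For $A$, I would similarly show each of the seven sums converges a.s.: the first term gives $\frac{n-p-1}{np}\sum_c l_c^{-1}\to \frac{c}{(1-c)\sigma^2}$, while all remaining terms carry an explicit $1/n$ or $1/n^2$ with only $O(1)$-many surviving summands (because $\hat\rho \to$ const and the sums over $c \in \{1,\dots,p\}$ or $\{\hat\rho+1,\dots,p\}$ that are not further normalized are multiplied by a spiked-indexed factor $\hat\gamma_k/l_k = O(1)$), so after using $\frac1{p-\rho}\sum_{c=\rho+1}^p \frac{1}{l_k - l_c}\to \frac1{\gamma_k + c\sigma^2}$ one finds all of them vanish except possibly a finite correction; the upshot is $A \to \frac{c}{(1-c)\sigma^2}$. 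Hence $\tilde\sigma^2 = A/B \to \sigma^2$ a.s. One subtlety: the value $\hat\rho$ is random, so I would argue on the event $\{\hat\rho = \rho \text{ eventually}\}$ when needed, or more carefully note that even without consistency of $\hat\rho$, as long as $\hat\rho$ is a.s. bounded the extra or missing terms are $O(1/n)$ by the weak regularity bounds, so the limit is unchanged — this is the content of the claim that $\tilde\sigma^2$ is consistent "even if $\hat\gamma$ and $\hat\rho$ are not."

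For part (\ref{eq:NORM-2}), the idea is to write $n(\tilde\sigma^2 - \sigma^2) = n(A - \sigma^2 B)/B$ and extract a CLT for the numerator. Since $B \to c/((1-c)\sigma^4)$ a.s. by the above, by Slutsky it suffices to get asymptotic normality of $n(A - \sigma^2 B)$ at rate $n$ (equivalently: the bulk statistics $\sum_c l_c^{-1}$ and $\sum_c l_c^{-2}$ fluctuate at rate $n$ around their means). This is where the linear-spectral-statistics CLT for sample covariance matrices (Bai–Silverstein-type results) enters: for smooth $f$, $\sum_{c}f(l_c) - p\int f\,d\mu_{MP}$ is asymptotically Gaussian with $O(1)$ variance, i.e. $\frac1n\sum_c f(l_c)$ fluctuates at rate $1/n$. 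Applying this to $f(x) = 1/x$ and $f(x) = 1/x^2$ on the bulk portion, and handling the finitely many spiked eigenvalues $l_1,\dots,l_\rho$ via their known fluctuations (which are also $O_P(1/\sqrt n)$ or better and enter $A,B$ with an extra $1/n$, hence are negligible at rate $n$ — but they do shift the centering, producing the $\mu^\pm$ discrepancy), one gets a central limit theorem. The reason the statement gives \emph{bounds} $X_n^- \le n(\tilde\sigma^2 - \sigma^2)\le X_n^+$ rather than a clean CLT is presumably that the dependence on the unconsistent-but-bounded $\hat\gamma_k$ and on the $G$-type remainder terms cannot be pinned down exactly, only sandwiched: I would construct $X_n^\pm$ by replacing the problematic $\hat\gamma$-dependent pieces with their extreme (sup/inf) values over the relevant range, verify each bound holds deterministically, and then show both $X_n^-$ and $X_n^+$ converge in distribution to normals with the \emph{same} variance $\frac{2c(1+c)^2}{(1-c)^4}\sigma^4$ (this common variance coming entirely from the bulk LSS CLT applied to $f(x)=1/x$, whose variance under the Marčenko–Pastur LSS covariance kernel is a standard computation) but possibly different means $\mu^\pm$.

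\textbf{Main obstacle.} The hard part is the fluctuation analysis in (\ref{eq:NORM-2}): establishing the joint CLT for the linear spectral statistics $\sum_c 1/l_c$ and $\sum_c 1/l_c^2$ at rate $n$ with the correct limiting covariance, and — more delicately — correctly accounting for the contribution of the $\rho$ spiked eigenvalues and of all the cross-terms in $A$ to the \emph{mean} shift, so that the explicit $\mu^\pm$ in (\ref{eq:NORM-mu+})--(\ref{eq:NORM-mu-}) come out right. The function $x\mapsto 1/x$ is not bounded and not analytic at $0$, but since the bulk eigenvalues are a.s. bounded away from $0$ by $(1-\sqrt c)^2\sigma^2 > 0$ eventually, one can localize and apply the LSS CLT to a smooth modification; making this rigorous (uniform control of the smallest bulk eigenvalue, and of the $\hat\gamma_k$-dependent remainder terms that force the bounding rather than an exact limit) is the technical crux. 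The a.s. convergence in part (\ref{eq:NORM-1}), by contrast, is essentially bookkeeping on top of Lemma \ref{lem:LLN}.
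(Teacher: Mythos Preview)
Your plan for part (i) matches the paper's one-line argument, though your arithmetic is off: with $\frac{1}{p}\sum_c l_c^{-1}\to \frac{1}{(1-c)\sigma^2}$ and $\frac{1}{p}\sum_c l_c^{-2}\to \frac{1}{(1-c)^3\sigma^4}$ one gets $B\to 1/\sigma^4$ and $A\to 1/\sigma^2$, not the values you wrote (the expression in the remark after Lemma~\ref{lem:LLN} is $\frac{n}{(n-p-1)p}\cdot n\,B$, not $B$ itself). The ratio is unaffected, so the conclusion survives.

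For part (ii) there is a real gap. You cannot apply the Bai--Silverstein LSS CLT ``on the bulk portion'' $l_{\rho+1},\dots,l_p$: these are \emph{not} the eigenvalues of any Wishart matrix, so the theorem does not apply to them directly, and applying it to the full spiked sample covariance would require centering by $F^{c_n,H_n}$ with $H_n$ the spiked population ESD, which is not the scaled Mar\v cenko--Pastur law you want. The paper's device is Cauchy interlacing: partitioning $S=\sigma^2\bigl(\begin{smallmatrix}S^{11}&S^{12}\\S^{21}&S^{22}\end{smallmatrix}\bigr)$ with $S^{22}\sim n^{-1}W_{p-\rho}(n,I)$ and eigenvalues $\mu_1>\dots>\mu_{p-\rho}$, one has $l_i>\sigma^2\mu_i>l_{i+\rho}$, and the Bai--Silverstein CLT is applied to the genuinely white $\mu_c$, jointly for $f(x)=1/x$ and $f(x)=1/x^2$. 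These two-sided interlacing inequalities --- together with one crude bound $0<\frac{3}{np}\sum_k\sum_{c\ne d}\frac{1}{l_k-l_c}\frac{\hat\gamma_k}{l_k-l_d}<\frac{3}{np}\sum_k\hat\gamma_k\bigl(\sum_c\frac{1}{l_k-l_c}\bigr)^2$ on a double sum in $A$ whose exact limit is not computed --- are what produce the sandwich $X_n^-\le n(\tilde\sigma^2-\sigma^2)\le X_n^+$. So the bounding is \emph{not} due to $\hat\gamma_k$ being inconsistent (it is assumed to converge a.s.\ to $\bar\gamma_k$) nor to any ``$G$-type'' remainder (Theorem~\ref{thm:URE} plays no role here). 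Finally, the limiting variance $\frac{2c(1+c)^2}{(1-c)^4}$ is obtained from the \emph{joint} covariance of the two linear statistics with coefficients $a_1\to 1+c$ and $a_2\to -(1-c)^2$, via explicit contour integrals for the Bai--Silverstein covariance kernel; it does not come from $f(x)=1/x$ alone.
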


An immediate consequence of this result is that $\tilde\sigma^2$ estimates $\sigma^2$ with rate $n$ in our absolute error loss $L(\hat\sigma^2,\sigma^2)=|\hat\sigma^2-\sigma^2|$. We should mention that, given good estimators of $\rho$ and $\gamma_k$, one could perhaps build approximate high-dimensional confidence intervals for $\sigma^2$ with part (\ref{eq:NORM-2}) of Theorem \ref{thm:NORM}. We will not investigate this further, but rather turn our attention to minimax rates for the noise estimation problem. For any spiked sequence $\{\Sigma_p\}$, define a $\delta$-ball of order $r$ as 
{\setlength{\mathindent}{10pt}\begin{align*}
\text{B}_r(\Sigma,\delta)=
\left\{\{\Sigma_p'\} \text{ spiked }\;\bigg\vert\; \left|\lambda_i(\Sigma_p)-\lambda_i(\Sigma'_p)\right|< \delta\frac{\lambda_i(\Sigma_p)}{n^r}\quad\forall p>0\right\}\!.
\end{align*}}%
We start with a lemma. Recall that $\text{d}_\text{TV}$ stands for the total variation distance between two probability measures.

\begin{lem}\label{lem:TV}
Let $\{\Sigma_p\}$ be a spiked sequence of covariance matrices and $M>0$. Then, as $p_n/n\rightarrow c\in(0,1)$,
\[
\lim_{n\rightarrow\infty}\sup_{\Sigma'\in\text{B}_r(\Sigma,2M)}\!\text{d}_\text{TV}\!\left(\vphantom{\bigg\vert}\text{N}(0,\Sigma_p)^n,\text{N}(0,\Sigma'_p)^n\right)\leq\; \sqrt{1-\exp\left(-\frac{cM^2}{2}\right)}
\]
when $r=1$, while this limit is zero when $r>1$.
\end{lem}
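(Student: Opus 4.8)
The plan is to reduce the total-variation bound to an explicit computation for multivariate Gaussians and then take the high-dimensional limit. First I would invoke the standard relation between total variation and Hellinger distance (or, more precisely, the bound $\text{d}_\text{TV}(\mu,\nu)\leq\sqrt{1-\big(1-\tfrac12\text{H}^2(\mu,\nu)\big)^2}$, equivalently $\text{d}_\text{TV}\leq\sqrt{1-\rho(\mu,\nu)^2}$ where $\rho$ is the Bhattacharyya/Hellinger affinity). For two centered Gaussians $\text{N}(0,\Sigma_p)$ and $\text{N}(0,\Sigma_p')$, the affinity has a closed form: the single-observation Bhattacharyya coefficient is $\det(\Sigma_p)^{1/4}\det(\Sigma_p')^{1/4}\big/\det\!\big(\tfrac{\Sigma_p+\Sigma_p'}{2}\big)^{1/2}$, and for the $n$-fold product $\text{N}(0,\Sigma_p)^n$ this just gets raised to the power $n$. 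So the whole problem becomes controlling
\begin{align*}
\rho_n=\left[\frac{\det(\Sigma_p)^{1/4}\det(\Sigma_p')^{1/4}}{\det\!\big((\Sigma_p+\Sigma_p')/2\big)^{1/2}}\right]^n
=\exp\!\left(-\frac n2\sum_{i=1}^{p}\phi\!\left(\frac{\lambda_i(\Sigma_p')}{\lambda_i(\Sigma_p)}\right)\right),
\end{align*}
where I have used that both matrices are spiked (hence simultaneously diagonalizable — here one uses that $\Sigma_p$ and $\Sigma_p'$ share the standard basis as eigenbasis, a point to check carefully or else to phrase the argument purely in terms of eigenvalues via the concavity of $\log\det$), and where $\phi(t)=\tfrac12\log\!\frac{1+t}{2\sqrt t}=\tfrac14\log t-\tfrac12\log\tfrac{1+t}{2}$ wait — more cleanly, $\phi(t)=\tfrac12\log\frac{(1+t)/2}{\sqrt t}$, a nonnegative function vanishing at $t=1$ with $\phi''(1)=1/4$.

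Next I would plug in the constraint defining the ball. For $\Sigma'\in\text{B}_r(\Sigma,2M)$ we have $|\lambda_i(\Sigma_p')-\lambda_i(\Sigma_p)|<2M\lambda_i(\Sigma_p)/n^r$, so the ratio $t_i=\lambda_i(\Sigma_p')/\lambda_i(\Sigma_p)$ satisfies $|t_i-1|<2M/n^r$, uniformly in $i$. A second-order Taylor expansion of $\phi$ around $1$ gives $\phi(t_i)=\tfrac18(t_i-1)^2+O\big((t_i-1)^3\big)$, hence $\sum_{i=1}^p\phi(t_i)\leq \tfrac p8\cdot\big(2M/n^r\big)^2(1+o(1)) = \tfrac{pM^2}{2n^{2r}}(1+o(1))$, where the $o(1)$ is uniform over the ball. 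Therefore
\begin{align*}
\rho_n\geq\exp\!\left(-\frac n2\cdot\frac{pM^2}{2n^{2r}}(1+o(1))\right)
=\exp\!\left(-\frac{p}{n}\cdot\frac{M^2}{4 n^{2r-1}}(1+o(1))\right).
\end{align*}
When $r=1$ this is $\exp\!\big(-\tfrac{cM^2}{4}(1+o(1))\big)$ in the limit (using $p_n/n\to c$), and hence $\text{d}_\text{TV}\leq\sqrt{1-\rho_n^2}\to\sqrt{1-\exp(-cM^2/2)}$ — note the factor-of-two from squaring the affinity lands exactly on the stated bound. When $r>1$, the exponent $\tfrac{M^2}{4}n^{1-2r}\cdot\tfrac pn\to 0$, so $\rho_n\to1$ and $\text{d}_\text{TV}\to 0$. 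Taking the supremum over the ball is harmless since all estimates were uniform in $\Sigma'$.

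The main obstacle, and the part deserving the most care, is the uniformity of the Taylor remainder over the ball combined with the growing number of terms $p=p_n\to\infty$: one is summing $p$ small quantities each controlled to third order, so I need the bound $\phi(t)\leq\tfrac18(t-1)^2+C|t-1|^3$ to hold for all $t$ in a fixed neighborhood of $1$ (which it does, with an explicit $C$, since $\phi$ is smooth there), and then $p\cdot C(2M/n^r)^3 = C(2M)^3 (p/n)\, n^{-3r+1}\to 0$ for $r\geq 1$, so the cubic term is genuinely negligible against the quadratic one $p\cdot\tfrac18(2M/n^r)^2\asymp n^{1-2r}\cdot(p/n)$. A secondary point to nail down is the simultaneous-diagonalizability claim: a spiked matrix is $\diag{\gamma,0}+\sigma^2 I$, so two spiked $\Sigma_p,\Sigma_p'$ are both diagonal in the standard basis and the product-measure affinity factorizes coordinatewise exactly as written; if one instead wanted to allow rotated spikes, the affinity formula in terms of $\det$ still holds and one would bound it via $\log\det$ concavity, but the clean coordinatewise bound above suffices for the definition of $\text{B}_r$ as stated. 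Finally I would double-check the constant: $\phi''(1)=\tfrac14$ gives leading term $\tfrac18(t-1)^2$, times $p$ terms, times $\tfrac n2$, times $2$ from $\rho_n^2$, times $(2M)^2$ from the ball radius, yields exponent $\tfrac n2\cdot p\cdot\tfrac18\cdot 4M^2/n^{2r}\cdot 2 = \tfrac{pM^2}{2n^{2r-1}}$, matching $\tfrac{cM^2}{2}$ at $r=1$ as claimed.
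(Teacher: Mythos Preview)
Your approach is essentially the paper's: bound total variation by the Hellinger/Bhattacharyya affinity, factor the product measure coordinatewise using that both spiked matrices are diagonal, and pass to the limit; the paper uses the explicit algebraic bound $\frac{2\sqrt{\lambda_i\lambda_i'}}{\lambda_i+\lambda_i'}>\frac{\sqrt{1-2M/n^r}}{1-M/n^r}$ and then evaluates $\lim (1-M^2/n^{2r}(1-M/n^r)^{-2})^{np/2}$ directly, whereas you Taylor-expand, but the content is the same.

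One bookkeeping point worth fixing before a clean write-up: with your $\phi(t)=\tfrac12\log\frac{(1+t)/2}{\sqrt t}$ one actually has $\phi''(1)=\tfrac18$ (not $\tfrac14$), so the leading term is $\tfrac1{16}(t-1)^2$; and from the determinant formula the affinity is $\rho_n=\exp\!\big(-n\sum_i\phi(t_i)\big)$, not $\exp\!\big(-\tfrac n2\sum_i\phi(t_i)\big)$. These two factor-of-two slips cancel, which is why your final exponent $cM^2/2$ comes out right.
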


Using this result, we now proceed to show a lower bound on the local minimax rate of convergence for estimating $\sigma^2$, using the classic two-point test argument of \cite{LeCam73}.

\begin{thm}\label{thm:MRLB} Say the underlying sequence of covariance matrices $\{\Sigma_p\}$ is spiked and $p_n/n\rightarrow c\in(0,1)$. Let $\epsilon>0$ and define $$M_\epsilon=\sqrt{-\frac2c\log\Big(1-(1-4\epsilon)^2\Big)}.$$  Then no estimator can estimate $\sigma^2$ with speed $\sigma^2M_\epsilon/n$ over the shrinking neighborhoods B$(\Sigma_p,2M_\epsilon)$, in the sense that
\begin{align*}
\underset{n\rightarrow\infty}{\lim\inf}\inf_{\hat\sigma^2}\sup_{\substack{\Sigma'\in\text{B}_1(\Sigma,2M_\epsilon)}}
\text{P}_{\Sigma'_p}\left[\vphantom{\Bigg\vert}|\hat\sigma^2-\sigma^{2\prime}|> \sigma^2\frac{M_\epsilon}n\right]\geq\epsilon.
\end{align*}
\end{thm}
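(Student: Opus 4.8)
The plan is to run the classical two-point (Le Cam) testing reduction, using Lemma \ref{lem:TV} to control the total variation distance between the competing hypotheses. First I would set up the two hypotheses inside the shrinking ball $\text{B}_1(\Sigma,2M_\epsilon)$: take $\{\Sigma_p\}$ itself as one point, and as the other a spiked sequence $\{\Sigma'_p\}$ obtained by perturbing \emph{only} the noise level, i.e. $\sigma^{2\prime}=\sigma^2(1+\delta_n/n)$ with the spiked eigenvalues $\gamma_k$ left unchanged (or scaled in the same proportion, whichever keeps the sequence genuinely spiked and inside the ball). One checks that $|\lambda_i(\Sigma_p)-\lambda_i(\Sigma'_p)| = \sigma^2\delta_n/n < 2M_\epsilon \lambda_i(\Sigma_p)/n$ for the bottom eigenvalue and that the top eigenvalues move by even less in relative terms, so that $\{\Sigma'_p\}\in\text{B}_1(\Sigma,2M_\epsilon)$ provided $\delta_n$ is chosen so that $\sigma^2\delta_n \le 2M_\epsilon\sigma^2$, i.e. we may push $\delta_n$ up to essentially $2M_\epsilon$. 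The separation in the parameter is then $|\sigma^{2\prime}-\sigma^2| = \sigma^2\delta_n/n$, which we want to be just larger than the claimed rate $\sigma^2 M_\epsilon/n$; so I would take $\delta_n$ slightly below $2M_\epsilon$ and slightly above $M_\epsilon$, say $\delta_n\to\delta\in(M_\epsilon,2M_\epsilon)$.

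Next comes the standard reduction: for any estimator $\hat\sigma^2$, testing $H_0:\Sigma_p$ against $H_1:\Sigma'_p$ via the rule ``reject $H_0$ iff $|\hat\sigma^2-\sigma^2|>\sigma^2 M_\epsilon/(2n)$'' (or a symmetric variant around the midpoint) shows that
\[
\sup_{\Sigma'\in\text{B}_1(\Sigma,2M_\epsilon)}\text{P}_{\Sigma'_p}\!\left[|\hat\sigma^2-\sigma^{2\prime}|>\sigma^2\tfrac{M_\epsilon}{n}\right]
\ \ge\ \tfrac12\Big(\text{P}_{\Sigma_p}[\text{type I}]+\text{P}_{\Sigma'_p}[\text{type II}]\Big)
\ \ge\ \tfrac12\Big(1-\text{d}_\text{TV}\big(\text{N}(0,\Sigma_p)^n,\text{N}(0,\Sigma'_p)^n\big)\Big),
\]
using the elementary fact that the sum of the two error probabilities of any test is at least $1-\text{d}_\text{TV}$ of the two laws, together with the triangle-inequality argument that any $\hat\sigma^2$ within $\sigma^2 M_\epsilon/n$ of the truth under both hypotheses would separate them (here one needs $|\sigma^{2\prime}-\sigma^2| = \sigma^2\delta_n/n > 2\sigma^2 M_\epsilon/n$ is \emph{not} what we have, so I would instead take the separation to be $> \sigma^2 M_\epsilon/n$ and argue via the fact that the two $\sigma^2 M_\epsilon/n$-balls are disjoint when $\delta_n > M_\epsilon$... wait, they need $\delta_n>2M_\epsilon$ for disjointness; so in fact the argument should compare errors at level $\sigma^2 M_\epsilon/(2n)$ and take $\delta_n>M_\epsilon$ — I will calibrate the constants carefully so that the two events are disjoint, which requires the separation to exceed twice the tolerance). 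The cleanest route is: tolerance $\sigma^2 M_\epsilon/(2n)$ would-be in the test but the statement asks for $\sigma^2 M_\epsilon/n$; since being within $\sigma^2 M_\epsilon/n$ of \emph{the truth} under $H_0$ and within $\sigma^2 M_\epsilon/n$ of the truth under $H_1$ forces the two truths within $2\sigma^2 M_\epsilon/n$, and $|\sigma^{2\prime}-\sigma^2|=\sigma^2\delta_n/n$, I must choose $\delta_n\ge 2M_\epsilon$ — but the ball only allows $\delta_n$ up to $2M_\epsilon$, so taking $\delta_n\uparrow 2M_\epsilon$ is exactly tight and is why the ball radius is $2M_\epsilon$. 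This calibration is the fussy part and I would do it slowly.

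Then I would invoke Lemma \ref{lem:TV} with $r=1$ and $M=M_\epsilon$: since $\{\Sigma'_p\}\in\text{B}_1(\Sigma,2M_\epsilon)$,
\[
\limsup_{n\to\infty}\ \text{d}_\text{TV}\big(\text{N}(0,\Sigma_p)^n,\text{N}(0,\Sigma'_p)^n\big)\ \le\ \sqrt{1-\exp\!\left(-\tfrac{cM_\epsilon^2}{2}\right)}\ =\ \sqrt{1-\big(1-(1-4\epsilon)^2\big)}\ =\ |1-4\epsilon|\ =\ 1-4\epsilon,
\]
by the very definition of $M_\epsilon$ (taking $\epsilon<1/4$, which is the only interesting range). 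Plugging this into the chain above gives $\liminf_n \sup \text{P} \ge \tfrac12(1-(1-4\epsilon)) = 2\epsilon \ge \epsilon$, with room to spare; the slack absorbs the $o(1)$ terms coming from the fact that the separation I can achieve is $\sigma^2\delta_n/n$ with $\delta_n\uparrow 2M_\epsilon$ rather than exactly $2M_\epsilon$, and from the $\limsup$ rather than $\lim$ in Lemma \ref{lem:TV}. Finally, since the bound holds for the particular alternative $\Sigma'_p$, it holds a fortiori for the supremum over the whole ball, and since it holds for every $\hat\sigma^2$ it holds for the infimum.

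\textbf{Main obstacle.} The genuinely delicate point is the constant bookkeeping in the Le Cam step — making sure the perturbation $\{\Sigma'_p\}$ simultaneously (a) stays inside $\text{B}_1(\Sigma,2M_\epsilon)$, (b) remains a bona fide spiked sequence (eigenvalues still ordered, multiplicities intact, and in particular the perturbed bottom eigenvalue stays positive and still has multiplicity $p-\rho$, which is automatic if we only rescale $\sigma^2$), and (c) has its noise level separated from $\sigma^2$ by strictly more than the tolerance appearing in the probability, so that the two ``good estimation'' events are disjoint. Because the radius $2M_\epsilon$ of the ball is exactly twice the tolerance $M_\epsilon$, this is a tight squeeze and must be handled by taking $\delta_n$ approaching $2M_\epsilon$ from below and exploiting the strict inequality $2\epsilon>\epsilon$ (equivalently, $\epsilon<1/4$ so that $1-4\epsilon$ is a nontrivial bound) to absorb the resulting vanishing slack. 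Everything else — the total-variation bound, the reduction from estimation to testing — is routine once Lemma \ref{lem:TV} is granted.
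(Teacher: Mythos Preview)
Your proposal is correct and follows essentially the paper's approach: a Le Cam two-point reduction combined with Lemma~\ref{lem:TV} at $r=1$, $M=M_\epsilon$, so that $\text{d}_\text{TV}\to\sqrt{1-\exp(-cM_\epsilon^2/2)}=1-4\epsilon$. The paper sidesteps your ``main obstacle'' by choosing the alternative as the scalar multiple $\Sigma'_p=(1-2M_\epsilon/n)\Sigma_p$, which is automatically spiked with the same $\rho$, sits (at the boundary of) $\text{B}_1(\Sigma,2M_\epsilon)$, and gives exact separation $|\sigma^2-\sigma^{2\prime}|=2\sigma^2 M_\epsilon/n$; it then runs the argument by contradiction (assume the sup is $<\epsilon$, build an event $A=[|\hat\sigma^2-\sigma^2|\le\sigma^2 M_\epsilon/n]$ with $\text{P}_{\Sigma_p}[A]\ge 1-\epsilon$ and $\text{P}_{\Sigma'_p}[A]<\epsilon$, contradicting $\text{d}_\text{TV}<1-2\epsilon$) rather than via your direct $\tfrac12(1-\text{d}_\text{TV})$ bound, but that is the same Le Cam inequality in different clothing.
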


Thus, the minimax rate of estimation of $\sigma^2$ over $n$-shrinking neighborhoods cannot be faster than $O_P(1/n)$ (so in particular over, say, fixed neighborhoods.) Using Theorem \ref{thm:NORM}, we can show our noise estimator $\tilde\sigma^2$ essentially achieves this rate, in the sense that it is $o_P(1/n^r)$  over $n^r$-shrinking neighborhoods for any $r>1$.

\begin{prop}\label{prop:RATE}
Let the underlying sequence of covariance matrices $\{\Sigma_p\}$ be spiked and $p_n/n\rightarrow c\in(0,1)$ with $\gamma_\rho/\sigma^2>\sqrt{c}$. For a given weak $\hat\Gamma$, let $\tilde\sigma^2$ be the associated extremizer of proposition \ref{prop:MINM}. If $\hat\rho$ is strongly consistent and for all $1\leq k\leq\rho$, $\hat\gamma_k$ a.s. converges to some finite constant, then for any $r>1$ and $M>0$, $\tilde\sigma^2$ estimates $\sigma^2$ with rate at least $\sigma^2M/n^r$ over the shrinking neighborhoods $\text{B}_r(\Sigma,2M)$, in the sense that
\begin{align*}
\lim_{n\rightarrow\infty}\sup_{\substack{\Sigma'\in\text{B}_r(\Sigma,2M)}}
\text{P}_{\Sigma'_p}\left[\vphantom{\Bigg\vert}|\tilde\sigma^2-\sigma^{2\prime}|> \sigma^2\frac{M}{n^r}\right]=0.
\end{align*}
\end{prop}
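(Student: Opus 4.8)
The plan is to obtain the uniform statement from the pointwise large-sample behaviour of $\tilde\sigma^2$ in Theorem~\ref{thm:NORM} together with the statistical indistinguishability, supplied by Lemma~\ref{lem:TV} when $r>1$, of $\mathrm{N}(0,\Sigma_p)^n$ from $\mathrm{N}(0,\Sigma'_p)^n$ over an $n^r$-shrinking ball. Fix $r>1$ and $M>0$. The first task is to read off what membership of a competing spiked sequence $\{\Sigma'_p\}$ in $\mathrm{B}_r(\Sigma,2M)$ imposes on its noise level: applying the defining inequality to the smallest eigenvalue, for which $\lambda_p(\Sigma_p)=\sigma^2$ and $\lambda_p(\Sigma'_p)=\sigma^{2\prime}$, gives $|\sigma^{2\prime}-\sigma^2|<2M\sigma^2/n^r$, hence $n|\sigma^{2\prime}-\sigma^2|=O(n^{1-r})\to0$ uniformly over the ball. (One can likewise read off $\rho'=\rho$ and $|\gamma'_k-\gamma_k|=O(n^{-r})$ for $n$ large, which we will not need directly.)

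Next I would exploit that $\tilde\sigma^2$ is well behaved under the base law. The hypotheses of the statement --- the given weak $\hat\Gamma$, with $\hat\rho$ strongly consistent and each $\hat\gamma_k$ a.s.\ convergent --- are precisely those of Theorem~\ref{thm:NORM}(ii) for the sequence $\{\Sigma_p\}$, so $n(\tilde\sigma^2-\sigma^2)$ is squeezed there between two sequences converging in distribution; in particular $\{n(\tilde\sigma^2-\sigma^2)\}_n$ is tight under $\mathrm{P}_{\Sigma_p}$. Now transfer: since $r>1$, $\varepsilon_n:=\sup_{\Sigma'\in\mathrm{B}_r(\Sigma,2M)}\mathrm{d}_\mathrm{TV}\!\big(\mathrm{N}(0,\Sigma_p)^n,\mathrm{N}(0,\Sigma'_p)^n\big)\to0$ by Lemma~\ref{lem:TV}, and since $\tilde\sigma^2$ is a statistic of the data, the law of $n(\tilde\sigma^2-\sigma^2)$ under $\mathrm{P}_{\Sigma'_p}$ is within $\varepsilon_n$ of its law under $\mathrm{P}_{\Sigma_p}$ in total variation; hence $\{n(\tilde\sigma^2-\sigma^2)\}_n$ is tight under $\mathrm{P}_{\Sigma'_p}$ \emph{uniformly} over $\Sigma'\in\mathrm{B}_r(\Sigma,2M)$. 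Writing $n(\tilde\sigma^2-\sigma^{2\prime})=n(\tilde\sigma^2-\sigma^2)-n(\sigma^{2\prime}-\sigma^2)$ and using the uniform $O(n^{1-r})$ bound of the first paragraph, $\{n(\tilde\sigma^2-\sigma^{2\prime})\}_n$ is uniformly tight under $\mathrm{P}_{\Sigma'_p}$ as well. So for every $\eta>0$ there is $K_\eta$ with $\sup_{\Sigma'\in\mathrm{B}_r(\Sigma,2M)}\mathrm{P}_{\Sigma'_p}\!\big[\,n|\tilde\sigma^2-\sigma^{2\prime}|>K_\eta\,\big]<\eta$ for all large $n$, and comparing this $n^{-1}$ scale against the threshold $\sigma^2M/n^r$ --- together with the shrinking of the ball --- forces the supremum in the conclusion to $0$.

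The step I expect to be the crux is the uniform transfer. Lemma~\ref{lem:TV} moves the law of a \emph{fixed} statistic uniformly, so one must verify that the transferred object is exactly the one Theorem~\ref{thm:NORM} controls and --- for the tightness to survive passage to the limit --- that the limiting law there depends continuously on the spiked parameters $(\gamma,\sigma^2)$. This is true because the squeezing sequences $X_n^\pm$ are assembled from the linear spectral statistics in Lemma~\ref{lem:LLN}, whose a.s.\ limits are explicit continuous functions of $(\gamma,\sigma^2)$, together with the assumed limits of $\hat\rho$ and $\hat\gamma_k$. Once that is settled, the remaining bookkeeping --- propagating the $\varepsilon_n$ and $\eta$ contributions and the final $n^{-1}$ versus $n^{-r}$ comparison --- is routine.
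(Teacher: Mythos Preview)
Your approach is the same as the paper's: transfer the law of the statistic from $\mathrm P_{\Sigma'_p}$ to $\mathrm P_{\Sigma_p}$ via Lemma~\ref{lem:TV}, control it under the base law via Theorem~\ref{thm:NORM}(ii), and then compare the $n^{-1}$ scale of the estimator with the $n^{-r}$ threshold. The paper writes this as an explicit $A_p+B_p$ split, you phrase it through uniform tightness; these are interchangeable.

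However, your final step does not go through as written, and the paper's own proof has the identical defect. From uniform tightness you obtain, for every $\eta>0$, a $K_\eta$ with
\[
\sup_{\Sigma'\in\mathrm B_r(\Sigma,2M)}\mathrm P_{\Sigma'_p}\!\big[\,|\tilde\sigma^2-\sigma^{2\prime}|>K_\eta/n\,\big]<\eta.
\]
To deduce $\mathrm P_{\Sigma'_p}\!\big[\,|\tilde\sigma^2-\sigma^{2\prime}|>\sigma^2M/n^r\,\big]\to 0$ from this you would need $\sigma^2M/n^r\geq K_\eta/n$ eventually, i.e.\ $n^{1-r}\to\infty$, which holds for $r<1$ but \emph{fails} for the stated $r>1$. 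Indeed, Theorem~\ref{thm:NORM}(ii) pins $n(\tilde\sigma^2-\sigma^2)$ between two sequences converging to nondegenerate normals, so $|\tilde\sigma^2-\sigma^2|$ is of exact order $1/n$; since $1/n\gg 1/n^r$ when $r>1$, the probability of exceeding $\sigma^2M/n^r$ tends to $1$, not $0$. The paper's last display makes the same slip: it writes the threshold as $\sigma^2M/n^{r-1}$ on the $n$-scale and then claims the standardized bounds tend to $\pm\infty$, but for $r>1$ one has $M/n^{r-1}\to 0$, so the arguments tend to finite constants. In short, the argument (yours and the paper's) is correct for $r<1$, which is also the direction that makes the ``essentially achieves the minimax $1/n$ rate'' interpretation coherent; with $r>1$ the conclusion as stated cannot hold. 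If you keep $r>1$, Lemma~\ref{lem:TV} still gives the TV transfer, but no amount of tightness at scale $1/n$ can beat a threshold of order $1/n^r$.
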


Thus we can conclude that, despite choosing our noise estimator to minimize a covariance problem, good behavior has been transferred to the noise estimation problem, which is not surprising in light of (\ref{eq:I-dual}). In particular, we see that by Theorem \ref{thm:NORM} (\ref{eq:NORM-1}), strong consistency of the noise estimator follows even when the eigenvalues of $\hat\Gamma$ itself are not consistent -- a robustness which is certainly welcome.

\section{Application}\label{sec:A}

Having built and analyzed our noise estimator, we now proceed to illustrate our construction by building a specific covariance estimator. We hope this concrete example will help clarify the approach taken and its behavior in the covariance problem.

\subsection{Example} We build a spiked covariance estimator as follows. The first step is to specify an asymptoticaly negligible spiked component $\hat\Gamma$. For $r$ be some fixed rank strictly smaller than $p$, we take $\tilde\Gamma_r=O\diag{\tilde\gamma}O'$ with
\begin{align*}&
\tilde\gamma_{k}=
\sum_{c=r+1}^p\!l_c
\left(\sum\limits_{c=r+1}^p\frac{l_c}{l_k-l_c}\right)^{-1}
\end{align*}
for $1\!\leq\!k\!\leq\!r$, and $0$ otherwise. These estimators are strongly consistent when $r=\rho$, as we will soon show using Lemma \ref{lem:LLN}; this is the main motivation for our choice. Note that this choice does not quite fit within the framework considered by \cite{Donoho14}, since it is not a function of $l_k$ only. With this choice of spiked part, let $\tilde\sigma^2_r$ be the minimizer of Proposition \ref{prop:MINM} associated with our spiked component. We then have a family $r\rightarrow \tilde\Sigma_r=\tilde\Gamma_r+\tilde\sigma^2_rI$ for all $0\!\leq\!r\!<\!p$, which we naturally extend to the $r\!=\!p$ case through $\tilde\Sigma_p=\tilde\Gamma_p=S$. 

Next, we select the rank $r$ based on the data. Motivated again by the results of Lemma \ref{lem:LLN}, we define the rank estimator
{\setlength{\mathindent}{5pt}\begin{align*}
\tilde\rho=\underset{0\leq r\leq p}{\arg\min}\left\{\frac{\1{r<p}}{l_{r+1}}\frac{(1+\sqrt{p/n})^2}{p-r}\hspace{-5pt}\sum_{c=r+1}^p l_c\geq 1, 
\quad
\big|F_r+G_r\big|\leq \frac {p+1}n\right\},
\end{align*}}%
with $F_r$, $G_r$ the $F$, $G$ of Theorem \ref{thm:URE} applied to $\tilde\Gamma_r$ and $\tilde\sigma^2_r$. This choice aims to select the smallest rank that lies both above the critical threshold and yields improvement in Haff risk over $S$. Since $r=p$ satisfies both criteria, the set is never empty and in the worst case we simply do not correct the eigenvalues of $S$. This will happen when there is strong departure from spikedness, which means that the construction is in some sense robust to this situation: it exploits it when present and reverts to $S$ when not.

Finally, we simply set $\tilde\Sigma=\tilde\Sigma_{\tilde\rho}$ as our estimator. In practice, the computation is straightforward, since everything is in closed-form, with polynomial complexity. An implementation is available as an R package at \url{http://stat.cornell.edu/~chetelat}. At the same time, as previously hinted, the estimator has strongly consistent eigenvalues under spikedness. The proof is a simple application of results from Section \ref{sec:P}.
\begin{prop}\label{prop:CONS}
If the underlying sequence of covariance matrices $\{\Sigma_p\}$ is spiked and $p_n/n\rightarrow c\in(0,1)$ with $\gamma_\rho/\sigma^2>\sqrt{c}$, then $\tilde\rho$, $\tilde\sigma^2$ and $\tilde\gamma_k$ for $1\leq k\leq \rho$ are all strongly consistent.
\end{prop}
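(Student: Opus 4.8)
The plan is to prove the three consistency claims in a natural order, using Lemma \ref{lem:LLN} as the main engine, together with Theorem \ref{thm:NORM} (i) for the noise estimator. First I would establish consistency of $\tilde\rho$, since the other two quantities are defined in terms of it. The rank estimator $\tilde\rho$ is the smallest $r$ simultaneously satisfying the ``above threshold'' criterion $\frac{\1{r<p}}{l_{r+1}}\frac{(1+\sqrt{p/n})^2}{p-r}\sum_{c=r+1}^p l_c\geq 1$ and the ``improvement over $S$'' criterion $|F_r+G_r|\leq \frac{p+1}{n}$. I would show that, almost surely and for $n$ large, $r=\rho$ satisfies both criteria while no $r<\rho$ satisfies the first one. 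For the threshold criterion at $r=\rho$: by Lemma \ref{lem:LLN}(ii) with $m=1$ (or rather the Mar\v{c}enko--Pastur law, since the $l_c$ for $c>\rho$ behave like $\sigma^2\text{MP}(c)$), $\frac{1}{p-\rho}\sum_{c=\rho+1}^p l_c \to \sigma^2$ a.s., and $l_{\rho+1}\to (1+\sqrt c)^2\sigma^2$ a.s. by \eqref{eq:l-limit} (the edge of the bulk), so the left-hand side tends to $\frac{(1+\sqrt c)^2\sigma^2}{(1+\sqrt c)^2\sigma^2}=1$; one needs a slightly careful argument that it is eventually $\geq 1$ (or that the strict-inequality failure at exactly the limit does not matter), which I would handle by noting that for $r<\rho$ the corresponding quantity has $l_{r+1}\to(1+c\sigma^2\frac{\gamma_{r+1}+\sigma^2}{\gamma_{r+1}})[\gamma_{r+1}+\sigma^2]$, which is strictly larger than $(1+\sqrt c)^2\sigma^2$ under supercriticality, while the sum $\frac{1}{p-r}\sum_{c=r+1}^p l_c$ still tends to roughly $\sigma^2$ plus a vanishing contribution from the finitely many spikes between $r$ and $\rho$, so the ratio tends to a limit strictly below $1$; hence a.s. eventually the criterion fails for every $r<\rho$. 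This forces $\tilde\rho\geq\rho$ eventually. To get $\tilde\rho\leq\rho$, I would show $|F_\rho+G_\rho|\leq\frac{p+1}{n}$ eventually: by Theorem \ref{thm:URE} applied to the strong estimator $\tilde\Gamma_\rho$ (which I would verify satisfies the regularity conditions, or cite that it does), $\E{|F|}$ stays bounded and $p\,\E{|G|}$ stays bounded, and more to the point $F_\rho$ and $G_\rho$ themselves converge a.s.\ to explicit limits computable via Lemma \ref{lem:LLN}; one checks the a.s.\ limit of $F_\rho+G_\rho$ has absolute value strictly less than the limit of $\frac{p+1}{n}=\frac{p+1}{n}\to c$ — this requires computing the limiting Haff risk of $\tilde\Sigma_\rho$ and comparing it to $c$ (the limiting Haff ``risk'' of $S$, essentially), i.e.\ showing the correction strictly improves on $S$ asymptotically. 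Combining, $\tilde\rho\to\rho$ a.s.

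Once $\tilde\rho\to\rho$ a.s., consistency of $\tilde\gamma_k$ is essentially immediate: on the a.s.\ event that $\tilde\rho=\rho$ eventually, $\tilde\gamma_k=\tilde\gamma_k^{(\rho)}=\sum_{c=\rho+1}^p l_c\big(\sum_{c=\rho+1}^p \frac{l_c}{l_k-l_c}\big)^{-1}$, and dividing numerator and denominator by $p-\rho$, Lemma \ref{lem:LLN}(ii) with $m=1$ gives numerator $\to\sigma^2$ while Lemma \ref{lem:LLN}(i) gives denominator $\to \sigma^2/\gamma_k$, hence $\tilde\gamma_k\to\gamma_k$ a.s. (Here I use that $\gamma_\rho/\sigma^2>\sqrt c$ so (i) applies for every $1\leq k\leq\rho$.)

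For $\tilde\sigma^2=\tilde\sigma^2_{\tilde\rho}$: on the a.s.\ event $\{\tilde\rho=\rho \text{ eventually}\}$ we have $\tilde\sigma^2=\tilde\sigma^2_\rho$, the minimizer of Proposition \ref{prop:MINM} associated to the spiked part $\tilde\Gamma_\rho$. Since $\tilde\Gamma_\rho$ is weak, $\tilde\rho=\rho$ is a deterministic finite constant, and $\tilde\gamma_k\to\gamma_k$ (finite), Theorem \ref{thm:NORM}(i) applies directly and gives $\tilde\sigma^2_\rho\to\sigma^2$ a.s. One small point to address is that $\tilde\sigma^2$ is a priori a mixture over the random index $\tilde\rho$, but since $\tilde\rho$ equals the constant $\rho$ for all large $n$ on a probability-one event, the a.s.\ limit of $\tilde\sigma^2$ coincides with that of $\tilde\sigma^2_\rho$.

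The main obstacle I anticipate is the $\tilde\rho$ argument, specifically the two strict-inequality verifications: (a) that for each $r<\rho$ the threshold quantity has a.s.\ limit strictly below $1$ (which rests on supercriticality $\gamma_\rho/\sigma^2>\sqrt c$ ensuring $l_{r+1}$ sits strictly above the bulk edge), and (b) that $\tilde\Gamma_\rho$ genuinely improves on $S$ in the limit, i.e.\ the a.s.\ limit of $|F_\rho+G_\rho|$ is strictly below $c$. Part (b) requires actually carrying out the random-matrix computation of the limiting Haff risk of $\tilde\Sigma_\rho$ using Lemma \ref{lem:LLN} and the remarks following it, and showing it is strictly smaller than that of $S$; this is the one place where routine-looking but genuinely substantive calculation is unavoidable, and where I would need to be careful about uniformity in $p$ so that a.s.\ convergence of $F_\rho+G_\rho$ actually holds (this is where the strong regularity conditions on $\tilde\Gamma_\rho$, and verifying them, earn their keep). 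Everything else is bookkeeping around the a.s.\ event where $\tilde\rho$ stabilizes.
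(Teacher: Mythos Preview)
Your proposal is correct and follows essentially the same route as the paper: show that $r=\rho$ eventually satisfies both selection criteria while every $r<\rho$ fails the threshold one, conclude $\tilde\rho=\rho$ eventually, and then read off consistency of $\tilde\gamma_k$ and $\tilde\sigma^2$ from Lemma~\ref{lem:LLN} and Theorem~\ref{thm:NORM}(i). The paper merely reverses the bookkeeping order, first recording $\hat\gamma_{\rho,k}\to\gamma_k$ and $\tilde\sigma^2_\rho\to\sigma^2$ before turning to $\tilde\rho$.

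One simplification you missed: for the URE criterion at $r=\rho$, you plan to show the a.s.\ limit of $|F_\rho+G_\rho|$ is strictly below $c$ and flag this as the substantive obstacle. The paper instead observes directly that $F_\rho+G_\rho\to 0$ a.s.: with $\hat\gamma_{\rho,k}\to\gamma_k$ and $\tilde\sigma^2_\rho\to\sigma^2$, the dominant large-sum terms in $F$ (those of the form $\frac{(n-p-1)(n-p-2)}{n^2p}\sum_c\hat\sigma^4/l_c^2$, $-\frac{n-p-1}{n^2p}(\sum_c\hat\sigma^2/l_c)^2$, $-2\frac{n-p-1}{np}\sum_c\hat\sigma^2/l_c$, and $+1$) combine via Lemma~\ref{lem:LLN}(ii) to $\frac{1}{1-c}-\frac{c}{1-c}-2+1=0$, while the remaining $\rho$-indexed terms and all of $G$ are $O(1/p)$. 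So the step you worried was hardest is in fact immediate, and no comparison with $c$ is needed. Your caution about the threshold quantity at $r=\rho$ sitting exactly on the boundary (limit equal to $1$) is well-placed; the paper simply asserts the threshold set reduces to $\{\rho\}$ without addressing this point.
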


This stands, of course, in contrast with the eigenvalues of the sample covariance matrix $S$, which converge to the wrong values (\ref{eq:l-limit}) in spiked settings.

\subsection{Numerical Comparisons}

We display the performance of the constructed estimator through simulations. The setting is as follows. We fix the dimension sample ratio $c$ at 0.5 and vary $n$, $p$. For each $n$, $p$, we simulate data from a normal $\text{N}(0,\Sigma)$ and approximate its Haff and Frobenius risk using a law of large numbers approximation with 100 iterations. Four true covariance matrices $\Sigma$ are considered. The first is a spiked setting $\Sigma=\diag{5,4,3,2,1,...,1}$, while the other three correspond to autoregressive settings $\Sigma_{ij}=\kappa^{|i-j|}$ for $\kappa=0.05$, $0.5$ and $0.95$. The case $\kappa=0.95$ is particularly difficult for the constructed estimator as it is very far from spikedness.

The risks are computed for $S$, our estimator $\tilde\Sigma$ and two benchmark competitors. The first is Stein's isotonized covariance estimator, with well regarded overall performance. We follow the implementation of \cite{Lin85}. The second is the popular linear shrinkage covariance estimator of  \cite{LedoitWolf04}, specifically designed for high-dimensional settings. We plot the risks and the gain in risk with respect to $S$, defined as $\text{Risk}(S)/\text{Risk}(\hat\Sigma)-1$. The computations were performed using the R package, and the results are given as Figures \ref{plot:spiked}-\ref{plot:ar095}. Blue corresponds to $S$, red to $\tilde\Sigma$, green to Stein's isotonized estimator and yellow to the Ledoit-Wolf estimator. 

\begin{figure}[t]
\caption{Spiked covariance setting.}
\label{plot:spiked}
\includegraphics[trim = 2cm 12.5cm 2cm 1cm, clip, scale=0.7]{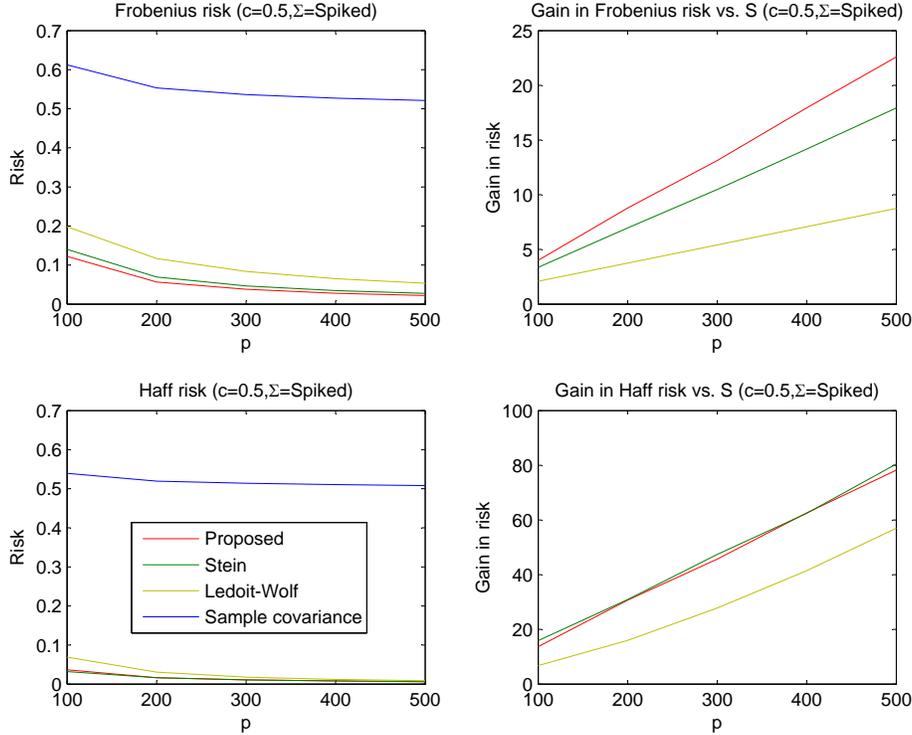}
\end{figure}

\begin{figure}[t]
\caption{AR(0.05) setting. The sample covariance matrix is omitted.}
\label{plot:ar005}
\includegraphics[trim = 1.5cm 12.5cm 2cm 1cm, clip, scale=0.7]{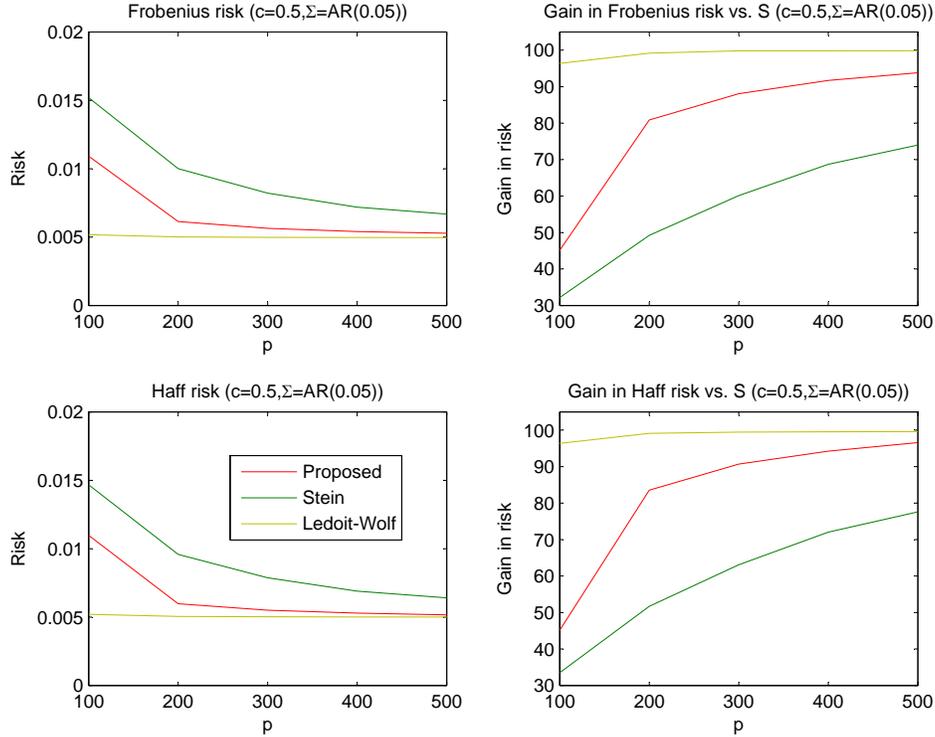}
\end{figure}

\begin{figure}[t]
\caption{AR(0.50) setting.}
\label{plot:ar050}
\includegraphics[trim = 2cm 12.5cm 2cm 1cm, clip, scale=0.7]{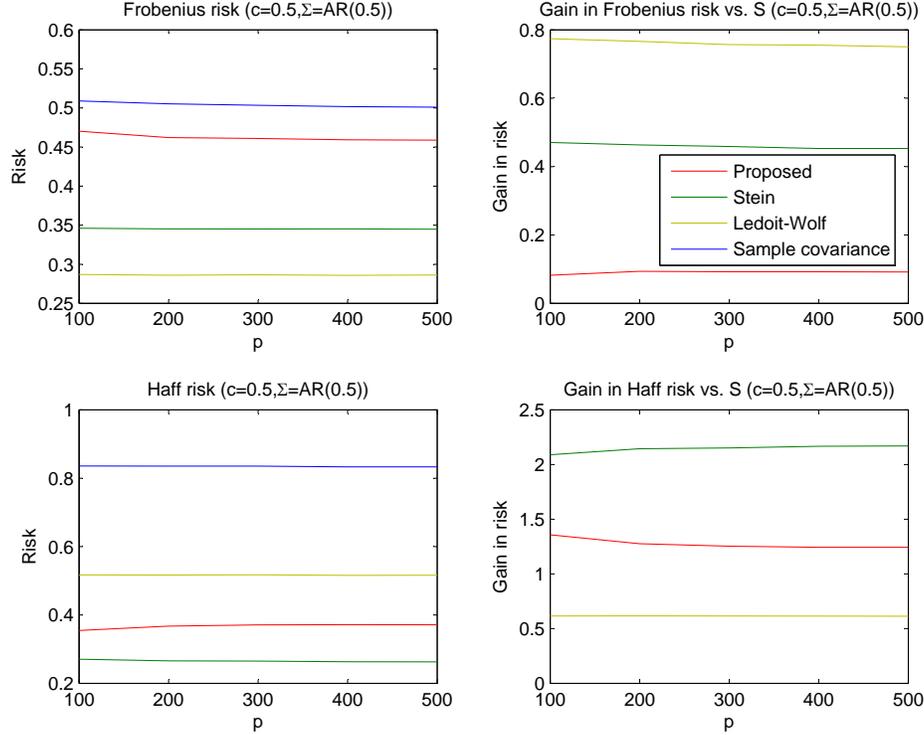}
\end{figure}

\begin{figure}[t]
\caption{AR(0.95) setting.}
\label{plot:ar095}
\includegraphics[trim = 2cm 12.5cm 2cm 1cm, clip, scale=0.7]{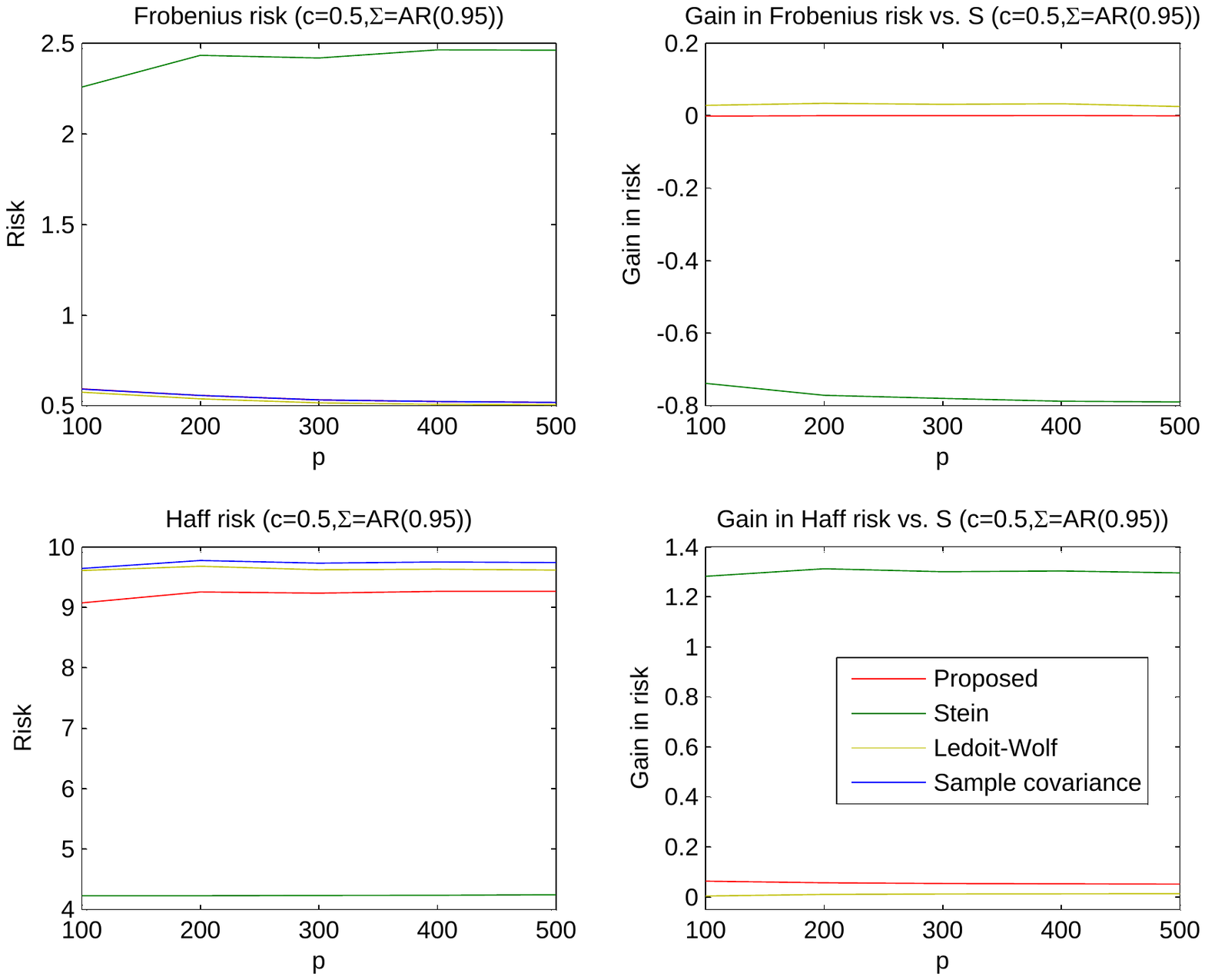}
\end{figure}

We can see from the results of the simulations that the expected good performance in Haff loss of our estimator seems to translate well into the more standard Frobenius loss. The estimator performs particularly well in supercritical spiked settings, with a $23$-times improvement over $S$ in Frobenius risk for $p=500$, $n=1000$ in our setting. In addition, the estimator is quite robust to deviations from spikedness, as even in worst-case scenarios such as an AR(0.95) setting, we do not do worse than $S$ in Haff or Frobenius risk. There is therefore little to lose by using it rather than the sample covariance matrix, and as far as such a thing can exist, it could be advocated as some kind of generic high-dimensional covariance estimator.

\section{Comments}\label{sec:D}

In this work we considered two parallel high-dimensional problems, the estimation of noise in principal components analysis under absolute error loss and the estimation of a spiked covariance matrix under Frobenius loss. We proposed a variational solution, by restricting ourselves to regular estimators and minimizing an unbiased covariance risk estimator in the invariant analogue of the loss. The resulting noise estimator was shown to be strongly consistent and almost asymptotically normal and minimax for the noise problem, and we used the construction to build a robust spiked covariance estimator with good simulation performance. Beyond this, however, there are several aspects of our solution that warrant further discussion. 

First, we assumed throughout this work that the underlying data was normal, and the construction and proofs depend quite heavily on it. This could be a point of discord between practice and the theory outlined here. However, we feel that, unlike many statistical problems where normality is convenient but unrealistic, it is quite natural here. Indeed, the construction and its properties only depends on the data through the eigenstructure of $S$, unlike other estimators such as the one of \cite{LedoitWolf04}. The sample covariance matrix being an empirical average, one can expect it to behave asymptotically like a Wishart, and in that sense the assumption does not appear particularly restrictive.

Another assumption running through the work is that although we are in high-dimensions, we keep $p\leq n$. The extension to a $p>n$ setting is attractive, as in addition to the properties described above, a corresponding robust spiked covariance estimator would be automatically invertible, in contrast with $S$. The single obstacle appears to be the absence of an appropriate unbiased risk estimator for the Haff loss when $p>n$. This is therefore more an obstruction by knowledge than mathematics, as if such a construction would be found, the method outlined in this work could easily be applied.

As we considered minimization of a covariance loss, it might be surprising that we did not present any results on the behavior of the estimator in the covariance problem. We strongly believe that the Haff risk must tend to zero under spikedness since, as some algebra shows, the unbiased risk estimator of our estimate tends a.s. to zero. This is quite interesting since the Haff risk of $S$ equals, in contrast, $(p+1)/n\rightarrow c>0$. Unfortunately, we haven't been able to prove this statement. Although the literature on the probabilistic behavior of Wishart eigenvalues is extensive, it is more scant on their $L^1$ behavior, and this limits what can be proven as of now.

In Section \ref{sec:C}, we considered an invariant analogue of the Frobenius loss $\|\hat\Sigma-\Sigma\|_F^2/p$, the Haff loss $\|\hat\Sigma\Sigma^{-1}-I\|_F^2/p$ which allowed for the existence of an unbiased risk estimator. Since our estimator is particularly adapted to this invariant covariance loss, it might also be of interest to study an invariant noise loss such as $|\hat\sigma^2/\sigma^2-1|$.

We did not tackle the problem of selecting the spiked eigenvalue estimators $\hat\gamma_k$ in an optimal way, beyond the suggestion in Section \ref{sec:A}. The recent work of \cite{Donoho14} could offer a solution. The authors consider the spiked covariance estimation problem where the noise is known and fixed at $\sigma^2=1$, and look for optimal shrinking of the spiked eigenvalues $l_k$, $1\leq k\leq \rho$. In the Frobenius and Haff losses, their optimal estimators coincide and equal
\begin{align*}
\hat\gamma_k=\left[l_k-1+\frac{cl_k}{l_k-1}\right]\frac{1-c/(l_k-1)^2}{1+c/(l_k-1)}
\end{align*}
for $l_k>(1+\sqrt{c})^2$. An appealing feature of this estimate is that it accounts for the deterministic angles between the top sample and population eigenvectors. It would be interesting to study the behavior of the noise estimator $\tilde\sigma^2$ from Theorem \ref{prop:MINM} applied to these spiked estimators, with perhaps adjustments for not knowing $\sigma^2$.

Finally, we should remark that our construction automatically provides well-conditioned covariance estimators, which is quite important for applications. Therefore, when the parameter of interest is the precision rather than the covariance matrix, using $\tilde\Sigma^{-1}$ as estimator appears reasonable, although we currently do not have any formal results on its behavior for this problem.
\section{Technical Results and Proofs}\label{sec:L}

\subsection{Proofs for Section \ref{sec:C}}

The following Stein-Haff identity is used to compute an unbiased estimator of risk for orthogonally invariant estimators in proposition \ref{thm:URE}. The general identity dates back to  \cite{Haff79} and its specialization to orthogonally invariant estimators for $n\geq p$ first implicitly used by \cite{Sheena95}. Unfortunately, the approach taken by the author requires regularity conditions that are difficult to verify in practice (conditions 1--3 in his Section 1 and 2). The following lemma follows the approach of \cite{Konno09} and \cite{KubokawaSrivastava08} to obtain the same $n\geq p$ result, but under weaker, simpler conditions.

\begin{lem}\label{lem:KONN} Let $W\sim W_p(n,\Sigma)$ with $n\geq p$, and let $W/n=OLO'$ be the spectral decomposition of the associated sample covariance matrix. Let $\psi_1(L),...,\psi_p(L)$ be differentiable functions of the eigenvalues of $W/n$ satisfying:
\begin{align}
&\E{\left|\sum_{k=1}^p \frac{n-p-1}n\frac{\psi_k}{l_k}+\frac{2}{n}\sum_{k=1}^p\frac{\partial \psi_k}{\partial l_k}+\frac1{n}\sum_{k\neq b}^p\frac{\psi_k-\psi_b}{l_k-l_b}\right|}<\infty.
\label{eq:KONN-reg}
\end{align}
Define $\Psi=\text{diag}(\psi_1,...,\psi_p)$. Then
{\setlength{\mathindent}{10pt}\begin{align*}
\E{\tr{\Sigma^{-1}O\Psi O'}}
=
\E{\sum_{k=1}^p \frac{n-p-1}n\frac{\psi_k}{l_k}+\frac{2}{n}\sum_{k=1}^p\frac{\partial \psi_k}{\partial l_k}+\frac1{n}\sum_{k\neq b}^p\frac{\psi_k-\psi_b}{l_k-l_b}}\!.
\end{align*}}
\end{lem}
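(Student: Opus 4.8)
\textbf{Proof proposal for Lemma \ref{lem:KONN}.}

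The plan is to establish the Stein--Haff identity by a change-of-variables argument on the Wishart density, passing the differential operators onto the test functions via integration by parts. First I would write the density of $W\sim W_p(n,\Sigma)$ with respect to Lebesgue measure on the cone of positive-definite matrices, and then pass to the spectral coordinates $(O,L)$ using the Jacobian of the map $W\mapsto(O,L)$, which introduces the Vandermonde-type factor $\prod_{k<b}(l_k-l_b)$; this is the source of the off-diagonal $\sum_{k\neq b}(\psi_k-\psi_b)/(l_k-l_b)$ term in the final formula. The key algebraic identity to exploit is that the Wishart density satisfies a first-order differential relation in $W$ of the form $\partial_{ij}\log f_W(W) = \tfrac{n-p-1}{2}(W^{-1})_{ij} - \tfrac{n}{2}(\Sigma^{-1})_{ij}$ (up to conventions), so that integrating $(\Sigma^{-1})$ against a matrix-valued function against the density can be re-expressed, after integration by parts, in terms of $W^{-1}$ and of the divergence of the test function. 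Specializing the test function to the orthogonally invariant form $O\Psi O'$ and carrying out the matrix calculus reduces everything to the eigenvalues.

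The main technical work, and I expect the main obstacle, is justifying the integration by parts itself, i.e. controlling the boundary terms. This is precisely where the regularity condition \eqref{eq:KONN-reg} enters: one needs the boundary contributions at the walls $l_k=l_b$ of the chamber $H_p(\mathbb{R})$ and at $l_p\to 0$, $l_1\to\infty$ to vanish, and one wants to invoke Fubini/dominated convergence to move the expectation and the derivatives around. Rather than re-deriving this from scratch, I would follow the route of \cite{Konno09} and \cite{KubokawaSrivastava08}: work first on a compact subset of $H_p(\mathbb{R})$ bounded away from the walls and from $0$ and $\infty$, where all manipulations are classical, obtain the identity there with an explicit remainder, and then let the compact set exhaust $H_p(\mathbb{R})$. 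The finiteness assumption \eqref{eq:KONN-reg} together with the boundary behavior of the Wishart density (the factors $l_k^{(n-p-1)/2}$ near $0$ and the exponential decay $e^{-\tfrac{n}{2}\operatorname{tr}(\Sigma^{-1}L)}$ near $\infty$, plus the Vandermonde factor near the diagonal walls) is exactly what makes the remainder vanish in the limit. The differentiability hypothesis on the $\psi_k$ is what lets the chamberwise computation go through.

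A secondary point worth isolating is the passage from the matrix derivative of $O\Psi O'$ to the eigenvalue expression: differentiating an orthogonally invariant matrix function produces both a "diagonal" piece $\sum_k \partial\psi_k/\partial l_k$ (with the factor $2/n$ coming from the symmetry convention on $\partial_{ij}$) and an "off-diagonal" piece coming from the variation of the eigenvectors $O$, which by standard perturbation theory contributes terms of the form $(\psi_k-\psi_b)/(l_k-l_b)$. I would treat this as a known computation in the Stein--Haff literature and simply cite it, checking only that the constants $\tfrac{n-p-1}{n}$, $\tfrac{2}{n}$, $\tfrac{1}{n}$ match the $n\geq p$ scaling of our normalized $S=W/n$. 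Once these pieces are in place the identity follows, with \eqref{eq:KONN-reg} guaranteeing that both sides are finite and that all interchanges of limits are legitimate.
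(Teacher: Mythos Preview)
Your proposal follows the classical Haff/Sheena route: work directly with the Wishart density in spectral coordinates and integrate by parts on the eigenvalue chamber $H_p(\mathbb{R})$. The paper takes a genuinely different path. It represents $W=X'X$ with $X\sim N_{n\times p}(0,I_n\otimes\Sigma)$, sets $\tilde X=X\Sigma^{-1/2}$ and $H=\tfrac{1}{n}\Sigma^{1/2}OL^{-1}\Psi O'\Sigma^{-1/2}$, and applies a Gaussian Stein identity (Lemma~3 of \cite{ChetelatWells12}) to the map $\tilde X\mapsto \tilde X H$. The derivatives $\partial l_k/\partial X_{ij}$ and $\partial O_{kl}/\partial X_{ij}$ are computed explicitly from the spectral decomposition, and the divergence $\mathrm{div}_{\mathrm{vec}\,\tilde X}\,\mathrm{vec}(\tilde X H)$ is shown to equal exactly the right-hand side of the identity. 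The Gaussian Stein lemma then requires only that this divergence be integrable, which is precisely \eqref{eq:KONN-reg}.

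The difference is not cosmetic; it is the whole point of the lemma. In the Wishart-density approach you outline, the integration by parts on $H_p(\mathbb{R})$ generates boundary terms at $l_p\to 0$, at $l_1\to\infty$, and at the walls $l_k=l_b$, and killing each of these typically requires separate growth or integrability conditions on $\psi_k$, $\psi_k/l_k$, and $(\psi_k-\psi_b)/(l_k-l_b)$ individually --- this is exactly what the paper refers to as Sheena's ``conditions 1--3'' that are ``difficult to verify in practice.'' Your assertion that \eqref{eq:KONN-reg} together with the density's boundary behavior ``is exactly what makes the remainder vanish'' is where the proposal has a gap: \eqref{eq:KONN-reg} bounds only the \emph{integrated sum} that appears after the boundary terms have already been discarded, and does not by itself control those boundary pieces term by term. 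The Gaussian-matrix route sidesteps the issue entirely because the Stein identity for $\tilde X$ lives on $\mathbb{R}^{np}$ with no boundary, so the single divergence condition \eqref{eq:KONN-reg} genuinely suffices. If you want to stay with the chamber approach you will need to impose and verify stronger hypotheses than the lemma states; to get the lemma as written, switch to the $X$-coordinates.
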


\begin{proof}[\bf Proof]
We use Lemma 3 in \cite{ChetelatWells12}. Decompose $W=X'X$ for some $X\sim N_{n\times p}(0,I_n\otimes \Sigma)$. In the spirit of Lemma 4.1 in \cite{Konno09}, we find
\begin{align*}
&(dX')X+X'dX=d(X'X)=n(dO)LO'+nOdLO'+nOLdO'
\\&\quad\Rightarrow\qquad O'\left[(dX')X+X'dX\right]O=nO'dOL+nL(dO')O+ndL.
\end{align*}
Since $O'dO+(dO')O=0$, we get
\begin{align*}
&O'\left[(dX')X+X'dX\right]O=nO'dOL-nLO'dO+ndL,
\end{align*}
so that for $k\neq l$,
\begin{align*}
&O'dO_{kl}=\frac1n\frac1{l_l-l_k}O'\left[(dX')X+X'dX\right]O_{kl},
\\&
dL_{kk}=\frac1nO'\left[(dX')X+X'dX\right]O_{kk}
\end{align*}
and $O'dO_{kk}=0$. Then 
\begin{align}
&\frac{\partial l_k}{\partial X_{ij}}=
\frac1n\sum_{\alpha,\beta,\gamma}O'_{k\alpha}\frac{X'_{\alpha\beta}}{dX_{ij}}X_{\beta\gamma}O_{\gamma k}
+\frac1n\sum_{\alpha,\beta,\gamma}O'_{k\alpha}X'_{\alpha\beta}\frac{X_{\beta\gamma}}{dX_{ij}}O_{\gamma k}
\notag\\&\qquad=
\frac2n\sum_{\gamma}O'_{kj}X_{i\gamma}O_{\gamma k}
\label{eq:KONN-partial1}
\end{align}
and
\begin{align}
&\frac{\partial O_{kl}}{\partial X_{ij}}
=
\frac1n\sum_{\alpha\neq l,\beta,\gamma,\epsilon} O_{k\alpha}\frac1{l_l-l_\alpha}O'_{\alpha\beta}\left[\frac{\partial X'_{\beta\gamma}}{\partial X_{ij}}X_{\gamma\epsilon}+X'_{\beta\gamma}\frac{\partial X'_{\gamma\epsilon}}{\partial X_{ij}}\right]O_{\epsilon l}
\notag\\&\qquad=
\frac1n\sum_{\alpha\neq l,\beta} O_{k\alpha}\frac{O'_{\alpha j}O_{\beta l}+O'_{\alpha\beta}O_{jl}}{l_l-l_\alpha}X_{i\beta}.
\label{eq:KONN-partial2}
\end{align}

Now define $\tilde X=X\Sigma^{-1/2}$ and $H=\frac1n\Sigma^{1/2} OL^{-1}\Psi O'\Sigma^{-1/2}$ -- we need to compute $\text{div}_{\text{vec}(\tilde X)}\;\text{vec}\left(\tilde X H\right)$. We find
{\setlength{\mathindent}{5pt}\begin{align}
&\text{div}_{\text{vec}(\tilde X)}\;\text{vec}\left(\tilde X H\right)
=
\sum_{\alpha,i,j}\frac{\partial}{\partial \tilde X_{\alpha i}} \left\lbrace \tilde X_{\alpha j} H_{ji} \right\rbrace 
=
n\sum_{i}H_{ii}+\sum_{\alpha,j}\tilde X_{\alpha j} \frac{\partial H_{ji}}{\partial \tilde X_{\alpha i}} 
\notag\\&\quad=
\sum_{\gamma}\frac{\psi_\gamma}{l_\gamma}+\frac1n\sum_{\alpha,\beta,i,j,k,l}\tilde X_{\alpha j} \Sigma^{1/2}_{\beta i}\Sigma^{1/2}_{jk} \frac{\partial}{\partial X_{\alpha \beta}} \left\lbrace  OL^{-1}\Psi O'_{kl} \right\rbrace\Sigma^{-1/2}_{li}
\notag\\&\quad=
\sum_{\gamma}\frac{\psi_\gamma}{l_\gamma}
+\frac1n\sum_{\alpha,k,l} X_{\alpha  k}  \frac{\partial}{\partial X_{\alpha l}} \left\lbrace  OL^{-1}\Psi O'_{kl} \right\rbrace
\label{eq:KONN-ure-xpr}
\\&\quad=
\sum_{\gamma}\frac{\psi_\gamma}{l_\gamma}
+\frac1n\sum_{\alpha,k,l,\beta} X_{\alpha  k}  \frac{\partial O_{k\beta}}{\partial X_{\alpha l}}\left[L^{-1}\Psi\right]_{\beta\beta}O'_{\beta l}
\notag\\&\qquad
+\frac1n\sum_{\alpha,k,l,\beta} X_{\alpha  k}  O_{k\beta}\frac{\partial \left[L^{-1}\Psi\right]_{\beta\beta}}{\partial X_{\alpha l}}O'_{\beta l}
+\frac1n\sum_{\alpha,k,l,\beta} X_{\alpha  k}  O_{k \beta}\left[L^{-1}\Psi\right]_{\beta\beta}\frac{\partial O'_{\beta l}}{\partial X_{\alpha l}}.\notag
\end{align}}
Using (\ref{eq:KONN-partial1}) and (\ref{eq:KONN-partial2}), we obtain
{\setlength{\mathindent}{5pt}\begin{align*}
&\quad=
\sum_{\gamma}\frac{\psi_\gamma}{l_\gamma}
+\frac1{n^2}\sum_{\alpha,k,l,\beta,\gamma\neq \beta,\epsilon}X_{\alpha  k}  
O_{k\gamma}\frac{O'_{\gamma l}O_{\epsilon\beta}+O'_{\gamma\epsilon}O_{l\beta}}{l_\beta-l_\gamma}X_{\alpha\epsilon}
\left[L^{-1}\Psi\right]_{\beta\beta}O'_{\beta l}
\notag\\&\quad\quad
+\frac2{n^2}\sum_{\alpha,k,l,\beta,\gamma,\epsilon}X_{\alpha  k}  O_{k\beta}
O'_{\gamma l}X_{\alpha\epsilon}O_{\epsilon\gamma}
\frac{\partial \left[\psi_\beta/l_\beta\right]}{\partial l_\gamma}O'_{\beta l}
\notag\\&\quad\quad
+\frac1{n^2}\sum_{\alpha,k,l,\beta,\gamma\neq \beta,\epsilon} X_{\alpha  k}  O_{k \beta}\left[L^{-1}\Psi\right]_{\beta\beta}
O_{l\gamma}\frac{O'_{\gamma l}O_{\epsilon\beta}+O'_{\gamma\epsilon}O_{l\beta}}{l_\beta-l_\gamma}X_{\alpha\epsilon}
\notag\\&\quad=
\sum_{\gamma}\frac{\psi_\gamma}{l_\gamma}
+\frac1{n}\sum_{\gamma\neq \beta}
 \frac{l_\gamma\psi_\beta}{(l_\beta-l_\gamma)l_\beta}
+\frac2{n}\sum_{\gamma}l_\gamma
\frac{\partial \left[\psi_\gamma/l_\gamma\right]}{\partial l_\gamma}
+\frac1{n}\sum_{\gamma\neq \beta} 
\frac{\psi_\beta}{l_\beta-l_\gamma}
\notag\\&\quad=
\frac{n-p-1}{n}\sum_{\gamma}\frac{\psi_\gamma}{l_\gamma}
+\frac2n\sum_{\gamma}\frac{\partial \psi_\gamma}{\partial l_\gamma}
+\frac1{n}\sum_{\gamma\neq \beta} 
\frac{\psi_\beta-\psi_\gamma}{l_\beta-l_\gamma}.
\end{align*}}
By (\ref{eq:KONN-reg}), we conclude
\begin{align*}
&\E{\left|\text{div}_{\text{vec}(\tilde X)}\;\text{vec}\left(\tilde X H\right)\right|}
\\&\qquad= 
\E{\left|\sum_{k=1}^p \frac{n-p-1}n\frac{\psi_k}{l_k}+\frac{2}{n}\sum_{k=1}^p\frac{\partial \psi_k}{\partial l_k}+\frac1{n}\sum_{k\neq b}^p\frac{\psi_k-\psi_b}{l_k-l_b}\right|}
<\infty.
\end{align*}
Therefore, we can apply Lemma 3 in \cite{ChetelatWells12} to $G=\frac1n OL^{-1}\Psi O'$, which holds for any $(p, n)$. We obtain that
\begin{align*}
&\E{\tr{\Sigma^{-1}O\Psi O'}}=\E{\tr{L^{-1}\Psi}+\tr{X'\nabla_{X}G'}}
\\&\qquad=
\E{
\sum_{\gamma}\frac{\psi_\gamma}{l_\gamma}
+\frac1n\sum_{k,l,\alpha}X'_{k\alpha}\frac{\partial}{\partial X_{\alpha l}}OL^{-1}\Psi O'_{kl}
}.
\end{align*}
But the expression inside the expected value is precisely eq. (\ref{eq:KONN-ure-xpr}), so
\begin{align*}
&\qquad= 
\E{\sum_{k=1}^p \frac{n-p-1}n\frac{\psi_k}{l_k}+\frac{2}{n}\sum_{k=1}^p\frac{\partial \psi_k}{\partial l_k}+\frac1{n}\sum_{k\neq b}^p\frac{\psi_k-\psi_b}{l_k-l_b}}
\end{align*}
as desired.
\end{proof}

\begin{lem}\label{lem:KONN2} Let $W\sim W_p(n,\Sigma)$ with $n\geq p$, and let $W/n=OLO'$ be the spectral decomposition of the associated sample covariance matrix. Let $\psi_1(L),...,\psi_p(L)$ be twice-differentiable functions of the eigenvalues of $W/n$, and define the associated quantities
\begin{align*}
\psi^*_k=\frac{n-p-1}n\frac{\psi_k^2}{l_k}+\frac{4}n\psi_k\frac{\partial \psi_k}{\partial l_k}+\frac2n\psi_k\sum_{b\neq k}^p\frac{\psi_k-\psi_b}{l_k-l_b}
\qquad\text{ for }k=1,...,p
\end{align*} 
with $\Psi^*=\text{diag}(\psi_1,...,\psi_p)$. Assume 
\begin{align*}
&\E{\sum_{k=1}^p\left|\frac{\psi^*_k}{l_k}\right|^{1+\epsilon}}<\infty
\qquad\qquad\text{ and }
\\&
\E{\left|\sum_{k=1}^p\frac{n-p-1}n\frac{\psi_k^*}{l_k}+\frac2n\sum_{k=1}^p\frac{\partial \psi_k^*}{\partial l_k}+\frac1n\sum_{k\neq b}^p\frac{\psi_k^*-\psi_b^*}{l_k-l_b}\right|}<\infty.
\end{align*}
for some $\epsilon>0$. Then
{\setlength{\mathindent}{5pt}\begin{align*}
\E{\!\vphantom{\bigg\vert}\tr{\big[\Sigma^{-1}O\Psi O'\big]^2}\!}
\!=
\E{\sum_{k=1}^p \frac{n-p-1}n\frac{\psi_k^*}{l_k}+\!\frac2n\sum_{k=1}^p\frac{\partial \psi_k^*}{\partial l_k}+\!\frac1n\sum_{k\neq b}^p\frac{\psi_k^*-\psi_b^*}{l_k-l_b}}\!\!.
\end{align*}}
\end{lem}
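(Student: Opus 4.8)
The plan is to recognize Lemma~\ref{lem:KONN2} as a second-order version of the Stein--Haff identity of Lemma~\ref{lem:KONN} and to prove it in two moves: a single integration by parts that collapses the quadratic trace $\tr{[\Sigma^{-1}O\Psi O']^2}$ into the linear trace $\tr{\Sigma^{-1}O\Psi^*O'}$, followed by an application of Lemma~\ref{lem:KONN} to the functions $\psi_1^*,\dots,\psi_p^*$.

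Write $B=O\Psi O'$, which is a matrix-valued function of $W$ alone since the $\psi_k$ are functions of $L$, so that $[\Sigma^{-1}O\Psi O']^2=\Sigma^{-1}(B\Sigma^{-1}B)$. First I would apply the Wishart Stein--Haff identity underlying Lemma~\ref{lem:KONN} (equivalently, Lemma~3 of \cite{ChetelatWells12} after the substitution $\tilde X=X\Sigma^{-1/2}$, exactly as in its proof) to the matrix function $\Phi=B\Sigma^{-1}B$; this is legitimate because the underlying integration by parts is carried out in the data only, so it tolerates the dependence of $\Phi$ on the fixed parameter $\Sigma$. This yields $\E{\tr{\Sigma^{-1}B\Sigma^{-1}B}}=\E{(n-p-1)\tr{W^{-1}B\Sigma^{-1}B}+2\,\tr{\mathcal{D}_W(B\Sigma^{-1}B)}}$, with $\mathcal{D}_W$ the Wishart Haff differential operator.

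Next I would evaluate the two pieces. For the first, cyclicity of the trace gives $\tr{W^{-1}B\Sigma^{-1}B}=\tr{\Sigma^{-1}BW^{-1}B}$, and since $BW^{-1}B=O\Psi O'\cdot\tfrac{1}{n}OL^{-1}O'\cdot O\Psi O'=\tfrac{1}{n}O\,\diag{\psi_k^2/l_k}\,O'$ is again orthogonally invariant, this term equals $\tfrac{n-p-1}{n}\tr{\Sigma^{-1}O\,\diag{\psi_k^2/l_k}\,O'}$, supplying the first term of $\psi_k^*$. For the second, the product rule (with the central $\Sigma^{-1}$ carried through as a constant) together with the symmetry of $B$ and $\Sigma^{-1}$ gives $2\,\tr{\mathcal{D}_W(B\Sigma^{-1}B)}=4\,\tr{\Sigma^{-1}B\,(\mathcal{D}_W B)}$; moreover $\mathcal{D}_W(O\Psi O')$ is orthogonally equivariant, hence of the form $O\,\diag{\mu_k}\,O'$, and reading it off the derivative formulas \eqref{eq:KONN-partial1}--\eqref{eq:KONN-partial2} from the proof of Lemma~\ref{lem:KONN} gives $\mu_k=\tfrac{1}{n}\tfrac{\partial\psi_k}{\partial l_k}+\tfrac{1}{2n}\sum_{b\neq k}\tfrac{\psi_k-\psi_b}{l_k-l_b}$ (which one can double-check against $\psi_k=l_k$, where it must collapse to $(p+1)/(2n)$). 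Consequently $B\,(\mathcal{D}_W B)=O\,\diag{\psi_k\mu_k}\,O'$, so the second piece equals $\tr{\Sigma^{-1}O\,\diag{\tfrac{4}{n}\psi_k\tfrac{\partial\psi_k}{\partial l_k}+\tfrac{2}{n}\psi_k\sum_{b\neq k}\tfrac{\psi_k-\psi_b}{l_k-l_b}}\,O'}$, i.e.\ the two remaining terms of $\psi_k^*$. Adding, $\E{\tr{[\Sigma^{-1}O\Psi O']^2}}=\E{\tr{\Sigma^{-1}O\Psi^*O'}}$. Finally, $\psi_k^*$ is differentiable in $L$ (as $\psi_k\in C^2$) and the second displayed hypothesis of Lemma~\ref{lem:KONN2} is precisely condition \eqref{eq:KONN-reg} written for $\psi^*$, so Lemma~\ref{lem:KONN} applied to $\psi^*$ rewrites $\E{\tr{\Sigma^{-1}O\Psi^*O'}}$ as the asserted right-hand side, completing the proof.

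I expect the main obstacle to be twofold. The primary one is the index bookkeeping in the divergence computation of the second piece: one must track which intermediate terms still carry a factor $\Sigma^{-1}$ (those touching the central matrix, which survive) as opposed to which collapse (those where $W^{-1}$ replaces the central $\Sigma^{-1}$), and verify that the survivors recombine with exactly the constants appearing in $\psi_k^*$; this is essentially the computation from the proof of Lemma~\ref{lem:KONN} carried through once more with an extra matrix factor, so roughly twice the length. The secondary but indispensable point is making the integration by parts rigorous: the intermediate functional $B\Sigma^{-1}B$ carries extra factors of $W^{-1}$, hence of $1/l_k$, so the bare $L^1$ control that suffices in Lemma~\ref{lem:KONN} no longer does, and it is the first displayed hypothesis $\E{\sum_{k=1}^p|\psi_k^*/l_k|^{1+\epsilon}}<\infty$ that furnishes, via H\"older's inequality and a truncation of $\Psi$, the uniform integrability needed both to differentiate under the expectation and to guarantee that every expectation in sight is finite.
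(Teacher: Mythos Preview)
Your proposal is correct and follows the same two-step strategy as the paper: one integration by parts reduces $\E{\tr{[\Sigma^{-1}O\Psi O']^2}}$ to $\E{\tr{\Sigma^{-1}O\Psi^* O'}}$, and then Lemma~\ref{lem:KONN} applied to $\psi^*$ gives the stated right-hand side, with the two displayed hypotheses playing exactly the roles you describe. The only difference is organizational: the paper executes the first step by a brute-force divergence computation in the $\tilde X$-coordinates (differentiating all six factors of $OL^{-1}\Psi O'\Sigma^{-1}O\Psi O'$ via \eqref{eq:KONN-partial1}--\eqref{eq:KONN-partial2} and recombining), whereas you package the same computation through the $W$-based Haff operator, the product rule $\tr{\mathcal{D}_W(B\Sigma^{-1}B)}=2\,\tr{\Sigma^{-1}B(\mathcal{D}_W B)}$, and the fact that $\mathcal{D}_W(O\Psi O')$ is itself of the form $O\,\diag{\mu_k}\,O'$. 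Your route is cleaner conceptually, though note that the equivariance heuristic alone does not immediately force diagonality---you will still need the eigenvector derivative formulas (in $W$- rather than $X$-variables) to verify both that the off-diagonal pieces cancel and that $\mu_k$ has the claimed form; this is exactly the computation the paper does in coordinates. The paper's use of the first hypothesis is precisely your H\"older argument (pairing $|\psi_k^*/l_k|^{1+\epsilon}$ against moments of $\tr{\Sigma^{-1}S}\sim\chi^2_{np}/n$), with no truncation needed.
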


\begin{proof}[\bf Proof]
We use Lemma 3 in \cite{ChetelatWells12} again. Decompose $W=X'X$ for some $X\sim N_{n\times p}(0,I_n\otimes \Sigma)$, and define $\tilde X=X\Sigma^{-1/2}$ and \linebreak $H=\frac1n\Sigma^{1/2} OL^{-1}\Psi O' \Sigma^{-1}O\Psi O'\Sigma^{-1/2}$. Then
{\setlength{\mathindent}{5pt}\begin{align}
&\text{div}_{\text{vec}(\tilde X)}\;\text{vec}\left(\tilde X H\right)
=
\sum_{\alpha,i,j}\frac{\partial}{\partial \tilde X_{\alpha i}} \left\lbrace \tilde X_{\alpha j} H_{ji} \right\rbrace 
=
n\sum_{i}H_{ii}+\sum_{\alpha,j}\tilde X_{\alpha j} \frac{\partial H_{ji}}{\partial \tilde X_{\alpha i}} 
\notag\\&\quad=
\sum_{i,j,\gamma}\Sigma^{-1}_{ij}O_{j\gamma}\frac{\psi_\gamma^2}{l_\gamma}O'_{\gamma i}
\\&\qquad\qquad
+\frac1n\sum_{\alpha,\beta,i,j,k,l}\tilde X_{\alpha j} \Sigma^{1/2}_{\beta i}\Sigma^{1/2}_{jk} \frac{\partial}{\partial X_{\alpha \beta}} \left\lbrace  OL^{-1}\Psi O' \Sigma^{-1}O\Psi O'\right\rbrace_{kl}\Sigma^{-1/2}_{li}
\notag\\&\quad=
\sum_{i,j,\gamma}\Sigma^{-1}_{ij}O_{j\gamma}\frac{\psi_\gamma^2}{l_\gamma}O'_{\gamma i}
+\frac1n\sum_{\alpha,k,l} X_{\alpha  k}  \frac{\partial}{\partial X_{\alpha l}} OL^{-1}\Psi O'_{k i} \Sigma^{-1}_{ij}O\Psi O'_{j l}
\label{eq:KONN2-ure-xpr}
\\&\quad=
\sum_{i,j,\gamma}\Sigma^{-1}_{ij}O_{j\gamma}\frac{\psi_\gamma^2}{l_\gamma}O'_{\gamma i}
+\frac1n\sum_{i,j,\alpha,k,l,\beta}\Sigma^{-1}_{ij}O\Psi O'_{j l} X_{\alpha  k}  \frac{\partial O_{k\beta}}{\partial X_{\alpha l}}\left[L^{-1}\Psi\right]_{\beta\beta}O'_{\beta i}
\notag\\&\quad\quad
+\frac1n\sum_{i,j,\alpha,k,l,\beta}\Sigma^{-1}_{ij}O\Psi O'_{j l} X_{\alpha  k}  O_{k\beta}\frac{\partial \left[L^{-1}\Psi\right]_{\beta\beta}}{\partial X_{\alpha l}}O'_{\beta i}
\notag\\&\quad\quad
+\frac1n\sum_{i,j,\alpha,k,l,\beta}\Sigma^{-1}_{ij}O\Psi O'_{j l} X_{\alpha  k}  O_{k \beta}\left[L^{-1}\Psi\right]_{\beta\beta}\frac{\partial O_{i \beta}}{\partial X_{\alpha l}}
\notag\\&\quad\quad
+\frac1n\sum_{i,j,\alpha,k,l} X_{\alpha  k}   OL^{-1}\Psi O'_{k i} \Sigma^{-1}_{ij}\frac{\partial O_{j\beta}}{\partial X_{\alpha l}}\Psi_{\beta\beta} O'_{\beta l}
\notag\\&\quad\quad
+\frac1n\sum_{i,j,\alpha,k,l} X_{\alpha  k}   OL^{-1}\Psi O'_{k i} \Sigma^{-1}_{ij}O_{j\beta}\frac{\partial \Psi_{\beta\beta}}{\partial X_{\alpha l}}O'_{\beta l}
\notag\\&\quad\quad
+\frac1n\sum_{i,j,\alpha,k,l} X_{\alpha  k}   OL^{-1}\Psi O'_{k i} \Sigma^{-1}_{ij}O_{j\beta}\Psi_{\beta\beta}\frac{\partial O_{l\beta}}{\partial X_{\alpha l}}
\notag\\&\quad=
\sum_{i,j,\gamma}\Sigma^{-1}_{ij}O_{j\gamma}\frac{\psi_\gamma^2}{l_\gamma}O'_{\gamma i}
\notag\\&\quad\quad
+\frac1{n^2}\hspace{-20pt}\sum_{i,j,k,l,\alpha,\beta,\gamma\neq\beta,\epsilon}\hspace{-20pt}
\Sigma^{-1}_{ij}O\Psi O'_{j l} X_{\alpha  k}  O_{k\gamma}\frac{O'_{\gamma l}O_{\epsilon\beta}+O'_{\gamma\epsilon}O_{l\beta}}{l_\beta-l_\gamma}X_{\alpha\epsilon}
\left[L^{-1}\Psi\right]_{\beta\beta}O'_{\beta i}
\notag\\&\quad\quad
+\frac2{n^2}\hspace{-10pt}\sum_{i,j,\alpha,\beta,\gamma,\epsilon}\hspace{-10pt}
\Sigma^{-1}_{ij}O\Psi O'_{j l} X_{\alpha  k}  O_{k\beta}
O'_{\gamma l}X_{\alpha\epsilon}O_{\epsilon\gamma}
\frac{\partial \left[\psi_\beta/l_\beta\right]}{\partial l_\gamma}
O'_{\beta i}
\notag\\&\quad\quad
+\frac1{n^2}\hspace{-18pt}\sum_{i,j,k,l,\alpha,\beta,\gamma\neq\beta,\epsilon}\hspace{-18pt}
\Sigma^{-1}_{ij}O\Psi O'_{j l} X_{\alpha  k}  O_{k \beta}\left[L^{-1}\Psi\right]_{\beta\beta}O_{i\gamma}\frac{O'_{\gamma l}O_{\epsilon\beta}+O'_{\gamma\epsilon}O_{l\beta}}{l_\beta-l_\gamma}X_{\alpha\epsilon}
\notag\\&\quad\quad
+\frac1{n^2}\hspace{-15pt}\sum_{i,j,\alpha,\beta,\gamma\neq\beta,\epsilon}\hspace{-20pt}
 X_{\alpha  k}   OL^{-1}\Psi O'_{k i} \Sigma^{-1}_{ij}
O_{j\gamma}\frac{O'_{\gamma l}O_{\epsilon\beta}+O'_{\gamma\epsilon}O_{l\beta}}{l_\beta-l_\gamma}X_{\alpha\epsilon}
\Psi_{\beta\beta} O'_{\beta l}
\notag\\&\quad\quad
+\frac2{n^2}\hspace{-10pt}\sum_{i,j,\alpha,\beta,\gamma,\epsilon}\hspace{-10pt}
 X_{\alpha  k}   OL^{-1}\Psi O'_{k i} \Sigma^{-1}_{ij}O_{j\beta}
O'_{\gamma l}X_{\alpha\epsilon}O_{\epsilon\gamma}
\frac{\partial \psi_\beta}{\partial l_\gamma}
O'_{\beta l}
\notag\\&\quad\quad
+\frac1{n^2}\hspace{-18pt}\sum_{i,j,k,l,\alpha,\beta,\gamma\neq\beta,\epsilon}\hspace{-18pt}
 X_{\alpha  k}   OL^{-1}\Psi O'_{k i} \Sigma^{-1}_{ij}O_{j\beta}\Psi_{\beta\beta}
O_{l\gamma}\frac{O'_{\gamma l}O_{\epsilon\beta}+O'_{\gamma\epsilon}O_{l\beta}}{l_\beta-l_\gamma}X_{\alpha\epsilon}
\notag\\&\quad=
\sum_{i,j,\gamma}\Sigma^{-1}_{ij}O_{j\gamma}\frac{\psi_\gamma^2}{l_\gamma}O'_{\gamma i}
\notag\\&\quad\quad
+\frac1{n}\hspace{-10pt}\sum_{i,j,l,\beta,\gamma\neq\beta}\hspace{-10pt}
\Sigma^{-1}_{ij}
O_{j\beta}\frac{l_\gamma\psi_\beta^2}{(l_\beta-l_\gamma)l_\beta}O'_{\beta i}
+\frac2{n}\sum_{i,j,\gamma}
\Sigma^{-1}_{ij}O_{j\gamma}\psi_\gamma l_\gamma
\frac{\partial \left[\psi_\gamma/l_\gamma\right]}{\partial l_\gamma}O'_{\gamma i}
\notag\\&\quad\quad
+\frac1{n}\hspace{-5pt}\sum_{i,j,\beta,\gamma\neq\beta}\hspace{-5pt}
\Sigma^{-1}_{ij}O_{j\gamma}\frac{\psi_\beta\psi_\gamma}{l_\beta-l_\gamma}O'_{\gamma i}
+\frac1n\hspace{-5pt}\sum_{i,j,\beta,\gamma\neq\beta}\hspace{-5pt}
\Sigma^{-1}_{ij}
O_{j\gamma}\frac{\psi_\gamma\psi_\beta}{l_\beta-l_\gamma}O'_{\gamma i} 
\notag\\&\quad\quad
+\frac2n\sum_{i,j,\gamma}
 \Sigma^{-1}_{ij}O_{j\gamma} \psi_\gamma\frac{\partial \psi_\gamma}{\partial l_\gamma}
O'_{\gamma i} 
+\frac1n\hspace{-5pt}\sum_{i,j,\beta,\gamma\neq\beta,\epsilon}\hspace{-5pt}
\Sigma^{-1}_{ij}O_{j\beta}\frac{\psi_\beta^2}{l_\beta-l_\gamma}O'_{\beta i}
\notag\\&\quad=
\frac{n-p-1}{n}\sum_{i,j,\gamma}\Sigma^{-1}_{ij}O_{j\gamma}\frac{\psi_\gamma^2}{l_\gamma}O'_{\gamma i}
+\frac4{n}\sum_{i,j,\gamma}
\Sigma^{-1}_{ij}O_{j\gamma}\psi_\gamma 
\frac{\partial \psi_\gamma}{\partial l_\gamma}O'_{\gamma i}
\notag\\&\quad\quad
+\frac2{n}\hspace{-10pt}\sum_{i,j,l,\beta,\gamma\neq\beta}\hspace{-10pt}
\Sigma^{-1}_{ij}
O_{j\gamma}
\psi_\gamma\frac{(\psi_\gamma-\psi_\beta)}{(l_\gamma-l_\beta)}
O'_{\gamma i}
\end{align}}
Thus
\begin{align*}
&\E{\left|\text{div}_{\text{vec}(\tilde X)}\;\text{vec}\left(\tilde X H\right)\right|}
=
\frac1n\E{\left|\sum_{i,j=1}^p\Sigma^{-1}_{ij}O\Psi^*O'_{ji}\right|}
\\&\qquad\leq
\frac1n\E{\sum_{k=1}^p
\bigg|\left[L^{1/2}O'\Sigma^{-1}OL^{1/2}\right]_{kk}\bigg|
\left|\frac{\psi^*_k}{l_k}\right|
}
\\&\qquad\leq
\frac1n
\sum_{k=1}^p\E{\left[L^{1/2}O'\Sigma^{-1}OL^{1/2}\right]^{1+\frac1\epsilon}_{kk}}^{\frac\epsilon{1+\epsilon}}
\E{\left|\frac{\psi^*_k}{l_k}\right|^{1+\epsilon}}^{\frac1{1+\epsilon}}
\\&\qquad\leq
\frac1n
\left(\E{\sum_{k=1}^p\left[L^{1/2}O'\Sigma^{-1}OL^{1/2}\right]
^{1+\frac1\epsilon}_{kk}}\right)^{\frac\epsilon{1+\epsilon}}
\left(\E{\sum_{k=1}^p\left|\frac{\psi^*_k}{l_k}\right|^{1+\epsilon}}\right)^{\frac1{1+\epsilon}}
\end{align*}
Since 
\begin{align*}
&\sum_{k=1}^p\left[L^{1/2}O'\Sigma^{-1}OL^{1/2}\right]^{1+\frac1\epsilon}_{kk}
\leq 
\left(\sum_{k=1}^p\left|L^{1/2}O'\Sigma^{-1}OL^{1/2}\right|_{kk}\right)^{1+\frac1\epsilon}
\\&\qquad=
\tr{L^{1/2}O'\Sigma^{-1}OL^{1/2}}^{1+\frac1\epsilon}
=
\tr{\Sigma^{-1}S}^{1+\frac1\epsilon}
\sim 
(\chi^2_{pn})^{1+\frac1\epsilon}
\end{align*}
we get
\begin{align*}
&\E{\left|\text{div}_{\text{vec}(\tilde X)}\;\text{vec}\left(\tilde X H\right)\right|}
\\&\;\;\leq
\frac{2\Gamma\left(1+\frac1\epsilon+\frac{np}2\right)^{\frac\epsilon{1+\epsilon}}}{n\Gamma\left(\frac{np}2\right)^{\frac\epsilon{1+\epsilon}}}
\left(\E{\sum_{k=1}^p\left|\frac{\psi^*_k}{l_k}\right|^{1+\epsilon}}\right)^{\frac1{1+\epsilon}}
<\infty
\end{align*}
by assumption of the lemma. Therefore by Lemma 3 in \cite{ChetelatWells12},
\begin{align*}
&\E{\tr{\big[\Sigma^{-1}O\Psi O'\big]^2}}=\E{\tr{L^{-1}\Psi}+\tr{X'\nabla_{X}G'}}
\\&\quad=
\E{
\sum_{i,j,k=1}^p\Sigma^{-1}_{ij}O_{jk}\frac{\psi_k^2}{l_k}O'_{k i}
\right.\\&\qquad\qquad\left.
+\frac1n\sum_{\alpha=1}^n\sum_{k,l=1}^p X_{\alpha  k}  \frac{\partial}{\partial X_{\alpha l}} OL^{-1}\Psi O'_{k i} \Sigma^{-1}_{ij}O\Psi O'_{j l}}
\\&\quad=
\E{\sum_{i,j,k=1}^p\Sigma^{-1}_{ij}O_{jk}\psi^*_k O'_{k i}}
\qquad\qquad\big(\text{ by (\ref{eq:KONN2-ure-xpr}) }\big).
\end{align*}
Finally, by Lemma \ref{lem:KONN}, we conclude
\begin{align*}
&\quad=
\E{\sum_{k=1}^p \frac{n-p-1}n\frac{\psi_k^*}{l_k}+\frac{2}n\sum_{k=1}^p\frac{\partial \psi_k^*}{\partial l_k}+\frac1n\sum_{k\neq b}^p\frac{\psi_k^*-\psi_b^*}{l_k-l_b}}
\end{align*}
as desired.
\end{proof}

\begin{lem}\label{lem:BDFC} Let $l_1>...>l_p>0$ be the eigenvalues of a $W_p(n,\Sigma)$-distributed matrix ,for some $\Sigma>0$. If $n\geq p+1$, then
\begin{enumerate}[(i)]
\item for any $1\leq k\leq p$ and $0\leq m<\frac{n-p-1}2$, $\E{\frac1{|l_k|^m}}<\infty$;
\item for any $1\leq k\neq b\leq p$ and $1\leq m<2$, $\E{\frac1{|l_k-l_b|^m}}<\infty$;
\item for any $1\leq k\neq b\neq e\leq p$ and $1\leq m<2$, $\E{\frac1{|l_k-l_b|^m|l_k-l_e|^m}}<\infty$.
\end{enumerate}
\end{lem}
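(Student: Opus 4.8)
The plan is to bound the joint density of the ordered eigenvalues $l_1>\dots>l_p>0$ and reduce each of the three assertions to the convergence of an elementary Gaussian-type integral over a positive orthant. Recall that for $W\sim W_p(n,\Sigma)$ with $n\geq p$ the density of $(l_1,\dots,l_p)$ on the ordered simplex is proportional to $\prod_{i<j}(l_i-l_j)\,\prod_i l_i^{(n-p-1)/2}\int_{O_p(\mathbb{R})}\exp\big(-\tfrac12\tr{\Sigma^{-1}OLO'}\big)\,dO$, with $L=\diag{l_1,\dots,l_p}$. Since $(O'\Sigma^{-1}O)_{ii}\geq\lambda_{\min}(\Sigma^{-1})=\|\Sigma\|_2^{-1}$ and the $l_i$ are positive, $\tr{\Sigma^{-1}OLO'}=\sum_i(O'\Sigma^{-1}O)_{ii}l_i\geq a\sum_i l_i$ with $a:=\|\Sigma\|_2^{-1}>0$, so the orthogonal integral is at most a constant times $e^{-a\sum_i l_i/2}$. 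Hence there is a constant $C=C(n,p,\Sigma)$ such that the eigenvalue density is bounded by $C\prod_{i<j}(l_i-l_j)\prod_i l_i^{(n-p-1)/2}e^{-a\sum_i l_i/2}$ on the ordered simplex. The hypothesis $n\geq p+1$ enters precisely here: it makes the exponent $(n-p-1)/2$ nonnegative, so that $\prod_i l_i^{(n-p-1)/2}\leq(\sum_i l_i)^{p(n-p-1)/2}$.

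For (i), since $l_k\geq l_p$ it suffices to bound $\E{l_p^{-m}}$. In the master bound I would set aside the single factor $l_p^{(n-p-1)/2}$, bound $\prod_{i<j}(l_i-l_j)\leq(\sum_i l_i)^{\binom{p}{2}}$ and $\prod_{i<p}l_i^{(n-p-1)/2}\leq(\sum_i l_i)^{(p-1)(n-p-1)/2}$, and absorb the resulting power of $\sum_i l_i$ into half of the exponential (a polynomial times $e^{-a\sum_i l_i/4}$ is bounded). Enlarging the region from the ordered simplex to $\mathbb{R}_+^p$ only increases the integral, and it then factorizes over coordinates: every factor other than the one involving $l_p$ is a finite Gaussian moment, while the $l_p$ factor is $\int_0^\infty l_p^{(n-p-1)/2-m}e^{-al_p/4}\,dl_p$, which converges exactly when $(n-p-1)/2-m>-1$, and in particular whenever $m<(n-p-1)/2$.

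For (ii) and (iii) the point is that the singular factors are already present, to the first power, in the Vandermonde determinant: cancelling, $|l_k-l_b|^{-m}\prod_{i<j}(l_i-l_j)=|l_k-l_b|^{1-m}\prod_{\{i,j\}\neq\{k,b\}}|l_i-l_j|$ for (ii), and $|l_k-l_b|^{-m}|l_k-l_e|^{-m}\prod_{i<j}(l_i-l_j)=|l_k-l_b|^{1-m}|l_k-l_e|^{1-m}\prod_{\{i,j\}\notin\{\{k,b\},\{k,e\}\}}|l_i-l_j|$ for (iii) (the pairs $\{k,b\}$ and $\{k,e\}$ are distinct, so both cancellations are legitimate). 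Bounding the remaining Vandermonde factors and the powers $\prod_i l_i^{(n-p-1)/2}$ by a power of $\sum_i l_i$, absorbing into half of the exponential, and enlarging to $\mathbb{R}_+^p$, I would be reduced to showing that $\int_{\mathbb{R}_+^p}|l_k-l_b|^{1-m}e^{-a\sum_i l_i/4}\,dl$, respectively $\int_{\mathbb{R}_+^p}|l_k-l_b|^{1-m}|l_k-l_e|^{1-m}e^{-a\sum_i l_i/4}\,dl$, is finite. Both factorize over the uninvolved coordinates; the remaining two-dimensional integral is finite because $|l_k-l_b|^{1-m}$ has a line singularity of order $m-1<1$, hence is locally integrable, with the Gaussian weight controlling infinity; and the three-dimensional one is handled by integrating out $l_b$ and $l_e$ at fixed $l_k$, each inner integral $\int_0^\infty|l_k-t|^{1-m}e^{-at/4}\,dt$ being dominated by an affine function of $l_k$, and then integrating against $e^{-al_k/4}$.

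The difficulty here is organizational rather than conceptual. One must check that the polynomial majorants for the non-singular factors are genuine pointwise bounds on the ordered simplex (each $l_i-l_j$ with $i<j$ lies in $(0,\sum_m l_m)$ and each $l_i\leq\sum_m l_m$), and, in (iii), that the two-variable singularity near the triple-coincidence locus $l_k=l_b=l_e$ is indeed tame: after cancellation the exponents are $1-m\in(-1,0]$ in two separate variables, which is integrable without appealing to the remaining $|l_b-l_e|$ factor. An alternative would be to invoke convergence criteria for Dirichlet- or Selberg-type integrals, but the direct factorization above is the cleaner route.
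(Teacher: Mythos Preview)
Your proposal is correct and arrives at the same conclusions by a somewhat different, cleaner route than the paper. The paper's proof first reduces (ii) to adjacent eigenvalues and (iii) to pairs of adjacent gaps, then performs an explicit linear change of variables to the gaps $x=l_k-l_{k+1}$, $z=l_b-l_{b+1}$ (and an intermediate $y$ when $k+1\neq b$), expands the remaining polynomial factor as a finite sum of monomials in the new coordinates, and bounds the integral term by term; this produces several cases and some bookkeeping. You instead keep the original coordinates, cancel the singular factors against the Vandermonde once, and crush \emph{all} remaining polynomial factors into a single power of $\sum_i l_i$, which is then absorbed into half of the exponential weight; the residual integral factorizes and the only nontrivial pieces are the one-, two-, and three-dimensional integrals with the mild singularity of order $m-1<1$.

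Your approach avoids the case analysis entirely and makes the role of the hypothesis $m<2$ transparent (it is exactly the local integrability threshold for $|l_k-l_b|^{1-m}$). The paper's approach, while more laborious, keeps closer track of the polynomial structure and would be the natural starting point if one needed quantitative bounds. One small remark: your claim that $\int_0^\infty|l_k-t|^{1-m}e^{-at/4}\,dt$ is dominated by an affine function of $l_k$ is correct but deserves a line of justification, since the piece $\int_0^{l_k}(l_k-t)^{1-m}\,dt=l_k^{2-m}/(2-m)$ grows like $l_k^{2-m}$; as $2-m\leq 1$ this is indeed $O(1+l_k)$, which is what you need.
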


\begin{proof}[\bf Proof] First, notice that in (ii), we can take $k<b$ without loss of generality. Then
\begin{align*}
\E{\frac1{|l_k-l_b|^m}}\leq \E{\frac1{|l_{k}-l_{k+1}|^m}},
\end{align*}
and it would be enough to show the r.h.s. finite for all $1\leq k< p$ to show (ii).

Similarly, in (iii), we can take $b<e$ without loss of generality, and there are then three possibilities. Either $k<b<e$, in which case
\begin{align*}
\E{\frac1{|l_k-l_b|^m|l_k-l_e|^m}}\leq \E{\frac1{|l_{k}-l_{k+1}|^m|l_{e-1}-l_e|^m}},
\end{align*}
or $b<k<e$ in which case
\begin{align*}
\E{\frac1{|l_k-l_b|^m|l_k-l_e|^m}}\leq \E{\frac1{|l_b-l_{b+1}|^m|l_{e-1}-l_e|^m}}
\end{align*}
or $b<e<k$ in which case
\begin{align*}
\E{\frac1{|l_k-l_b|^m|l_k-l_e|^m}}\leq \E{\frac1{|l_b-l_{b+1}|^m|l_e-l_{e+1}|^m}}
\end{align*}
Thus in any case it is enough to show that
\begin{align*}
\E{\frac1{|l_k-l_{k+1}|^m|l_b-l_{b+1}|^m}}<\infty
\end{align*}
for all $1\leq k<b< p$ to show (iii).

By \cite{Muirhead82}, Theorem 3.2.18, the joint density of $l_1>...>l_p$ is given by
\begin{align}
f_{l_1,...,l_p}(l_1,...,l_p)=&\;\frac{\pi^{p^2/2}2^{-pn}|\Sigma|^{-n/2}}{\Gamma_p(p/2)\Gamma_p(n/2)}\prod_{i=1}^pl_i^{\frac{n-p-1}2}\prod_{1\leq i<j\leq p}(l_i-l_j)
\\&\;\int_{O(p)}\text{etr}\left(-\frac 12\Sigma^{-1}HLH'\right)dH\;\1{l_1>...>l_p>0}
\label{eq:BDFC-density}
\end{align}
for $L=\text{diag}(l_1,...,l_p)$. Define $I_2=\lbrace (i,j)\,\vert\, i<j \wedge (i,j)\neq (k,k+1) \rbrace$ and $I_3=\lbrace (i,j)\,\vert\, i<j \wedge (i,j)\neq (k,k+1), (b,b+1)\rbrace$. The expressions
\begin{align*}
&P_1(l_1,...,l_p)=\prod_{i\neq k}^p l_i^{n-p-1}\prod_{i<j}^p(l_i-l_j)^2
\\&
P_2(l_1,...,l_p)=\prod_{i=1}^pl_i^{n-p-1}\prod_{(i,j)\in I_2}(l_i-l_j)^2
\\&
P_3(l_1,...,l_p)=\prod_{i=1}^pl_i^{n-p-1}\prod_{(i,j)\in I_3}(l_i-l_j)^2
\end{align*}
and $K=\frac{\pi^{p^2/2}2^{-pn}|\Sigma|^{-n/2}}{\Gamma_p(p/2)\Gamma_p(n/2)}$ can then be defined to write
{\setlength{\mathindent}{0pt}\begin{align*}
&f_{l_1,...,l_p}(l_1,...,l_p)
=
K l_k^{\frac{n-p-1}2} P^{1/2}_1(l_1,...,l_p)
\\&\qquad\qquad
\int_{O(p)}\!\!\!\!\!\!\text{etr}\left(-\frac 12\Sigma^{-1}HLH'\right)dH\;\1{l_1>...>l_p>0}
\\&\qquad
=K |l_k-l_{k+1}| P^{1/2}_2(l_1,...,l_p)
\\&\qquad\qquad
\int_{O(p)}\!\!\!\!\!\!\text{etr}\left(-\frac 12\Sigma^{-1}HLH'\right)dH\;\1{l_1>...>l_p>0}
\\&\qquad=
K |l_k-l_{k+1}||l_b-l_{b+1}| P^{1/2}_3(l_1,...,l_p)
\\&\qquad\qquad
\int_{O(p)}\!\!\!\!\!\!\text{etr}\left(-\frac 12\Sigma^{-1}HLH'\right)dH\;\1{l_1>...>l_p>0}.
\end{align*}}
The important point is that since $n-p-1\geq 0$, $P_1$, $P_2$ and $P_3$ are polynomials in $l_1,...,l_p$. Define $x=l_{k+1}-l_k$, $y=l_{k+1}-l_b$ and $z=l_b-l_{b+1}$, so that $l_k=l_{b+1}+z+y+x$, $l_{k+1}=l_{b+1}+z+y$ and $l_b=l_{b+1}+z$. (It might happen that $k+1$=$b$, something which should be kept in mind.) Then 
{\setlength{\mathindent}{5pt}\begin{align*}
&P_2(l_1,...,l_{k+1}+x,...,l_{k+1},...,l_p),
\\&
P_3(l_1,...,l_{b+1}+z+x,...,l_{b+1}+z,...,l_b)
&\text{ if }k+1=b,
\\&
P_3(l_1,...,l_{b+1}+z+y+x,...,l_{b+1}+z+y,...,l_{b+1}+z,...,l_b)
&\text{ if }k+1\neq b,
\end{align*}}
must still be polynomials, in $\lbrace l_i\rbrace\backslash\lbrace l_k\rbrace\cup\lbrace x\rbrace$, $\lbrace l_i\rbrace\backslash\lbrace l_k,l_b\rbrace\cup\lbrace x,z\rbrace$ and \linebreak$\lbrace l_i\rbrace\backslash\lbrace l_k,l_{k+1},l_b\rbrace\cup\lbrace x,y,z\rbrace$ respectively. Therefore, for some finite degrees $D_1, ..., D_4$ one can write
\begin{align*}
&P_1(l_1,...,l_p)=
\sum_{\substack{d_1+...+d_p\\ \leq D_1}}
A^1_{d_1,...,d_p}l_1^{d_1}\cdots l_p^{d_p},
\\&
P_2(l_1,...,l_{k+1}+x,...,l_{k+1},...,l_p)=
\sum_{\substack{d_1+...+d_p\\ \leq D_2}}
A^2_{d_1,...,d_p}l_1^{d_1}\cdots x^{d_k}\cdots l_p^{d_p},
\\&
P_3(l_1,...,l_{b+1}+z+x,...,l_{b+1}+z,...,l_b)
\\&\qquad=
\sum_{\substack{d_1+...+d_p\\ \leq D_3}}
A^3_{d_1,...,d_p}l_1^{d_1}\cdots x^{d_k}\cdots z^{d_{b}}\cdots l_p^{d_p},
\qquad\qquad\text{ if }k+1=b,
\\&
P_3(l_1,...,l_{b+1}+z+y+x,...,l_{b+1}+z+y,...,l_{b+1}+z,...,l_b)
\\&\qquad=
\sum_{\substack{d_1+...+d_p\\ \leq D_4}}
A^4_{d_1,...,d_p}l_1^{d_1}\cdots x^{d_k}\cdots y^{d_{k+1}}\cdots z^{d_b}\cdots l_p^{d_p},
\quad\text{ if }k+1\neq b.
\end{align*}
for coefficients $A^1_{d_1,...,d_p}, ..., A^4_{d_1,...,d_p}\in\mathbb{R}$.

If we denote the greatest eigenvalue of $\Sigma$ by $\lambda_{\text{max}}$, then $\Sigma^{-1}\geq \lambda_{\text{max}}^{-1}I$ so that
\begin{align*}
\int_{O(p)}\text{etr}\left(-\frac 12\Sigma^{-1}HLH'\right)dH
\leq
\exp\left(-\frac1{2\lambda_\text{max}}\sum_{i=1}^p l_i\right)
\end{align*}
for any $l_1,...,l_p\geq 0$.

Now, for (i), we can use (\ref{eq:BDFC-density}) and $\1{l_1>...>l_p>0}\leq \prod_{i=1}^p\1{l_i>0}$ to find
\begin{align*}
&\E{\frac1{|l_k|^m}}
\leq
K\int_{\mathbb{R}^p}\frac1{l_k^{m-\frac{n-p-1}2}} P^{1/2}_1(l_1,...,l_p)
\exp\left(-\frac1{2\lambda_\text{max}}\sum_{i=1}^p l_i\right)\;
\\&\qquad\qquad\qquad\qquad
\1{l_1>...>l_p>0}dl_1\cdots dl_p
\\&\qquad\leq
K\sum_{\substack{d_1+...+d_p\\ \leq D_1}}
\sqrt{\left|A^1_{d_1,...,d_p}\right|}
\int_0^\infty l_1^{d_1/2}e^{-l_1/{2\lambda_\text{max}}}\;dl_1 \cdots
\\&\qquad\qquad
\int_0^\infty l_k^{d_k/2-m+\frac{n-p-1}2}e^{-l_k/{2\lambda_\text{max}}}\;dl_k \cdots
\int_0^\infty l_p^{d_p/2}e^{-l_p/{2\lambda_\text{max}}}\;dl_p.
\end{align*}
Notice that for $i\neq k$, $\int_0^\infty l_i^{d_i/2}e^{-l_i/{2\lambda_\text{max}}}\;dl_i<\infty$ for any $d_i\geq 0$, and \linebreak$\int_0^\infty l_k^{d_k/2-m+\frac{n-p-1}2}e^{-l_k/{2\lambda_\text{max}}}\;dl_k<\infty$ for all $d_k\geq 0$ iff $0\leq m<\frac{n-p-1}2$. Thus $\E{1/|l_k|^m}<\infty$, as desired.

For (ii), we proceed similarly, but though a change of variables $(l_k,l_{k+1})\rightarrow (l_{k+1}+x,l_k)$. Then, using 
{\setlength{\mathindent}{5pt}\begin{align*}
\1{\vphantom{\bigg|}l_1>...>l_{k+1}+x>...>l_{k+1}>...>l_p>0}
\leq 
\1{x>0}\prod_{i\neq k}^p\1{l_i>0},
\end{align*}}
we obtain
{\setlength{\mathindent}{5pt}\begin{align*}
&\E{\frac1{|l_k-l_{k+1}|^m}}
\leq
K\int_{\mathbb{R}^p}\frac1{|l_k-l_{k+1}|^{m-1}} P^{1/2}_2(l_1,...,l_p)
\\&\qquad\qquad\qquad\qquad
\exp\left(-\frac1{2\lambda_\text{max}}\sum_{i=1}^p l_i\right)\;
\1{l_1>...>l_p>0}dl_1\cdots dl_p
\\&\qquad\leq
K\sum_{\substack{d_1+...+d_p\\ \leq D_2}}
\sqrt{\left|A^2_{d_1,...,d_p}\right|}
\int_0^\infty l_1^{d_1/2}e^{-l_1/{2\lambda_\text{max}}}\;dl_1 \cdots
\\&\qquad\qquad
\int_0^\infty x^{d_k/2-m+1}e^{-x/{2\lambda_\text{max}}}\;dx \cdots
\int_0^\infty l_{k+1}^{d_{k+1}/2}e^{-l_{k+1}/{\lambda_\text{max}}}\;dl_{k+1} \cdots
\\&\qquad\qquad
\int_0^\infty l_p^{d_p/2}e^{-l_p/{2\lambda_\text{max}}}\;dl_p.
\end{align*}}
Then again, for $i\neq k$ and any $d_i\geq 0$, the respective integrals are finite; and to have $\int_0^\infty x^{d_k/2-m+1}e^{-x/{2\lambda_\text{max}}}\;dx<\infty$ for all $d_k\geq 0$ requires $m<2$. In such a case, we end up with $\E{1/{|l_k-l_{k+1}|^m}}<\infty$, as desired.

For (iii), we must consider separately the cases $k+1=b$ and $k+1\neq b$. In the first case, one can take the change of variables $(l_k,l_b,l_{b+1})\longrightarrow(l_{b+1}+x+z,l_{b+1}+z,l_{b+1})$. Using that
\begin{align*}
&\1{\vphantom{\bigg|}l_1>...>l_{b+1}+x+z>...>l_{b+1}+z>...>l_{b+1}>...>l_p>0}
\\&\qquad\qquad\leq 
\1{x>0}\1{z>0}\prod_{i\neq k,b}^p\1{l_i>0},
\end{align*}
we then obtain
{\setlength{\mathindent}{5pt}\begin{align*}
&\E{\frac1{|l_k-l_{b}|^m|l_b-l_{b+1}|^m}}
\leq
K\int_{\mathbb{R}^p}\frac1{|l_k-l_{b}|^{m-1}|l_b-l_{b+1}|^{m-1}} 
\\&\qquad\qquad
P^{1/2}_3(l_1,...,l_p)
\exp\left(-\frac1{2\lambda_\text{max}}\sum_{i=1}^p l_i\right)\;
\1{l_1>...>l_p>0}dl_1\cdots dl_p
\\&\qquad\leq
K\sum_{\substack{d_1+...+d_p\\ \leq D_3}}
\sqrt{\left|A^3_{d_1,...,d_p}\right|}
\int_0^\infty l_1^{d_1/2}e^{-l_1/{2\lambda_\text{max}}}\;dl_1 \cdots
\\&\qquad\qquad
\int_0^\infty x^{d_k/2-m+1}e^{-x/{2\lambda_\text{max}}}\;dx \cdots
\int_0^\infty z^{d_b/2-m+1}e^{-z/{\lambda_\text{max}}}\;dz \cdots
\\&\qquad\qquad
\int_0^\infty l_{b+1}^{d_{b+1}/2}e^{-3l_{b+1}/{2\lambda_\text{max}}}\;dl_{b+1} \cdots
\int_0^\infty l_p^{d_p/2}e^{-l_p/{2\lambda_\text{max}}}\;dl_p.
\end{align*}}
Again, all the integrals converge as long as $m<2$, in which case we have $\E{1/{|l_k-l_b|^m|l_b-l_{b+1}|^m}}<\infty$, as desired.

Finally, for (iii) with $k+1\neq b$, one can take the change of variables $(l_k,l_{k+1},l_b,l_{b+1})\longrightarrow
(l_{b+1}+x+y+z,l_{b+1}+y+z,l_{b+1}+z,l_{b+1})$. Then using that
\begin{align*}
&\1{\vphantom{\bigg|}
l_1>...>l_{b+1}+x+y+z>...>l_{b+1}+y+z>...>l_{b+1}+z
\right.\\&\qquad\qquad\left.
>...>l_{b+1}>...>l_p>0\vphantom{\bigg|}}
\\&\qquad\qquad\leq 
\1{x>0}\1{y>0}\1{z>0}\prod_{i\neq k,k+1,b}^p\1{l_i>0},
\end{align*}
we obtain
{\setlength{\mathindent}{5pt}\begin{align*}
&\E{\frac1{|l_k-l_{k+1}|^m|l_b-l_{b+1}|^m}}
\leq
K\int_{\mathbb{R}^p}\frac1{|l_k-l_{k+1}|^{m-1}|l_b-l_{b+1}|^{m-1}} 
\\&\qquad\qquad
P^{1/2}_4(l_1,...,l_p)
\exp\left(-\frac1{2\lambda_\text{max}}\sum_{i=1}^p l_i\right)\;
\1{l_1>...>l_p>0}dl_1\cdots dl_p
\\&\qquad\leq
K\sum_{\substack{d_1+...+d_p\\ \leq D_4}}
\sqrt{\left|A^4_{d_1,...,d_p}\right|}
\int_0^\infty l_1^{d_1/2}e^{-l_1/{2\lambda_\text{max}}}\;dl_1 \cdots
\\&\qquad\qquad
\int_0^\infty x^{d_k/2-m+1}e^{-x/{2\lambda_\text{max}}}\;dx \cdots
\int_0^\infty y^{d_k/2-m+1}e^{-y/{\lambda_\text{max}}}\;dy \cdots
\\&\qquad\qquad
\int_0^\infty z^{d_b/2-m+1}e^{-3z/{2\lambda_\text{max}}}\;dz \cdots
\int_0^\infty l_{b+1}^{d_{b+1}/2}e^{-2l_{b+1}/{\lambda_\text{max}}}\;dl_{b+1} \cdots
\\&\qquad\qquad
\int_0^\infty l_p^{d_p/2}e^{-l_p/{2\lambda_\text{max}}}\;dl_p.
\end{align*}}
All the integrals converge as long as $m<2$, in which case we have \linebreak$\E{1/{|l_k-l_{b+1}|^m|l_b-l_{b+1}|^m}}<\infty$, as desired.
\end{proof}

\begin{proof}[\bf Proof of Theorem \ref{thm:URE}]
By independence, it is clear that
\begin{align*}
\E{L_H(\hat\Sigma,\Sigma)}
&=\text{E}_{\hat\rho}\left[\text{E}_S\left[L_H(\hat\Sigma,\Sigma)\,\vert\,\hat\rho\right]\right]
\\
&=\text{E}_{\hat\rho}\left[\text{E}_S\left[L_H(\hat\Sigma,\Sigma)\right]\right],
\end{align*}
so we can treat $\hat\rho$ as a constant throughout the calculations, without loss of generality. Define the auxiliary terms $\psi_k=\hat\gamma_k+\hat\sigma^2$ and 
\begin{align*}
\psi^*_k=\frac{n-p-1}n\frac{\psi^{2}_k}{l_k}+4\frac{\psi_k}n\frac{\partial \psi_k}{\partial l_k}+2\frac{\psi_k}n\sum_{b\neq k}^p\frac{\psi_k-\psi_b}{l_k-l_b}
\end{align*}
for all $1\leq k\leq p$, and consider:
{\setlength{\mathindent}{0pt}\begin{align*}
&R_1=\sum_{k=1}^p\frac{n-p-1}n\frac{\psi_k^{*}}{l_k}+\frac2n\sum_{k=1}^p\frac{\partial \psi_k^{*}}{\partial l_k}+\frac1n\sum_{k\neq b}^p\frac{\psi_k^{*}-\psi_b^{*}}{l_k-l_b}
\\&\;\;=
\left\lbrace
\frac{(n-p-1)^2}{n^2}\sum_{k=1}^{p}\frac{\psi^2_k}{l_k^2}
+4\frac{n-p-1}{n^2}\sum_{k=1}^{p}\frac{\psi_k}{l_k}\frac{\partial\psi_k}{\partial l_k}
\right.\\&\qquad\left.
+2\frac{n-p-1}{n^2}\sum_{k\neq b=1}^p\frac{\psi_k}{l_k}\frac{\psi_k-\psi_b}{l_k-l_b}
\right\rbrace
+\left\lbrace
4\frac{n-p-1}{n^2}\sum_{k=1}^{p}\frac{\psi_k}{l_k}\frac{\partial\psi_k}{\partial l_k}
\right.\\&\qquad\left.
-2\frac{n-p-1}{n^2}\sum_{k=1}^{p}\frac{\psi^2_k}{l_k^2}
+\frac8{n^2}\sum_{k=1}^{p}\left(\frac{\partial\psi_k}{\partial l_k}\right)^2
+\frac8{n^2}\sum_{k=1}^{p}\psi_k\frac{\partial^2\psi_k}{\partial l_k^2}
\right.\\&\qquad\left.
+\frac4{n^2}\sum_{k\neq b=1}^p\frac{\partial\psi_k}{\partial l_k}\frac{\psi_k-\psi_b}{l_k-l_b}
+\frac4{n^2}\sum_{k\neq b=1}^p\psi_k\frac{\frac{\partial\psi_k}{\partial l_k}-\frac{\partial\psi_b}{\partial l_k}}{l_k-l_b}
\right.\\&\qquad\left.
-\frac2{n^2}\sum_{k\neq b=1}^p\left(\frac{\psi_k-\psi_b}{l_k-l_b}\right)^2
\right\rbrace
+\left\lbrace
\left(
2\frac{n-p-1}{n^2}\sum_{k\neq b=1}^p\frac{\psi_k}{l_k}\frac{\psi_k-\psi_b}{l_k-l_b}
\right.\right.\\&\qquad\left.\left.
-\frac{n-p-1}{n^2}\sum_{k\neq b=1}^p\frac{\psi_k}{l_k}\frac{\psi_b}{l_b}
\right)
+\left(
\frac4{n^2}\sum_{k\neq b=1}^p\psi_k\frac{\frac{\partial\psi_k}{\partial l_k}-\frac{\partial\psi_b}{\partial l_b}}{l_k-l_b}
\right.\right.\\&\qquad\left.\left.
+\frac4{n^2}\sum_{k\neq b=1}^p\frac{\partial\psi_k}{\partial l_k}\frac{\psi_k-\psi_b}{l_k-l_b}
\right)
+\left(
\frac2{n^2}\!\!\sum_{k\neq b\neq e=1}^p\frac{\psi_k}{l_k-l_b}\left(\frac{\psi_k-\psi_e}{l_k-l_e}-\frac{\psi_b-\psi_e}{l_b-l_e}\right)
\right.\right.\\&\qquad\left.\left.
+\frac2{n^2}\sum_{k\neq b\neq e=1}^p\frac{\psi_k-\psi_b}{l_k-l_b}\frac{\psi_k-\psi_e}{l_k-l_e}
+\frac2{n^2}\sum_{k\neq b=1}^p\left(\frac{\psi_k-\psi_b}{l_k-l_b}\right)^2
\right)
\right\rbrace
\\&\;\;=
\frac{(n-p-1)(n-p-2)}{n^2}\sum_{k=1}^{p}\frac{\psi^2_k}{l_k^2}
-\frac{(n-p-1)}{n^2}\left(\sum_{k=1}^p\frac{\psi_k}{l_k}\right)^2
\\&\qquad
+\frac8{n^2}\sum_{k=1}^{p}\left(\frac{\partial\psi_k}{\partial l_k}\right)^2
+\frac8{n^2}\sum_{k=1}^{p}\psi_k\frac{\partial^2\psi_k}{\partial l_k^2}
+8\frac{n-p-1}{n^2}\sum_{k=1}^{p}\frac{\psi_k}{l_k}\frac{\partial\psi_k}{\partial l_k}
\\&\qquad
+4\frac{n-p-1}{n^2}\sum_{k\neq b=1}^p\frac{\psi_k}{l_k}\frac{\psi_k-\psi_b}{l_k-l_b}
+\frac8{n^2}\sum_{k\neq b=1}^p\frac{\partial\psi_k}{\partial l_k}\frac{\psi_k-\psi_b}{l_k-l_b}
\\&\qquad
+\frac4{n^2}\sum_{k\neq b=1}^p\psi_k\frac{\frac{\partial\psi_k}{\partial l_k}-\frac{\partial\psi_b}{\partial l_b}}{l_k-l_b}
+\frac4{n^2}\sum_{k\neq b=1}^p\psi_k\frac{\frac{\partial\psi_k}{\partial l_k}-\frac{\partial\psi_b}{\partial l_k}}{l_k-l_b}
\\&\qquad
+\frac2{n^2}\sum_{k\neq b\neq e=1}^p\frac{\psi_k}{l_k-l_b}\left(\frac{\psi_k-\psi_e}{l_k-l_e}-\frac{\psi_b-\psi_e}{l_b-l_e}\right)
\\&\qquad
+\frac2{n^2}\sum_{k\neq b\neq e=1}^p\frac{\psi_k-\psi_b}{l_k-l_b}\frac{\psi_k-\psi_e}{l_k-l_e}.
\end{align*}}
Now, by H\"older's inequality, we find:
{\setlength{\mathindent}{0pt}\begin{align*}
&\;\;\E{\vphantom{\bigg\vert}|R_1|}
\leq
\frac{|n-p-1||n-p-2|}{n^2}\sum_{k=1}^{p}\E{\left|\frac{\psi_k}{l_k}\right|^2}
\\&\qquad
+\frac{|n-p-1|}{n^2}\left(\sum_{k=1}^p\E{\left|\frac{\psi_k}{l_k}\right|^2}^{\frac12}\right)^2
+\frac8{n^2}\sum_{k=1}^{p}\E{\left|\frac{\partial\psi_k}{\partial l_k}\right|^2}
\\&\qquad
+\frac8{n^2}\sum_{k=1}^{p}\E{\left|\psi_k\frac{\partial^2\psi_k}{\partial l_k^2}\right|}
+8\frac{|n-p-1|}{n^2}\sum_{k=1}^{p}\E{\left|\frac{\psi_k}{l_k}\right|^2}^\frac12\E{\left|\frac{\partial\psi_k}{\partial l_k}\right|^2}^\frac12
\\&\qquad
+4\frac{|n-p-1|}{n^2}\sum_{k\neq b=1}^p\E{\left|\frac{\psi_k}{l_k}\right|^{4.5}}^\frac1{4.5}
\\&\hspace{120pt}
\left(\E{\left|\psi_k\right|^{4.5}}^\frac1{4.5}
+
\E{\left|\psi_b\right|^{4.5}}^\frac1{4.5}
\right)
\E{\frac1{|l_k-l_b|^{1.8}}}^\frac1{1.8}
\\&\qquad
+\frac8{n^2}\sum_{k\neq b=1}^p\left(\E{\left|\frac{\partial\psi_k}{\partial l_k}\right|^{4.5}}^\frac1{4.5}\E{\left|\psi_k\right|^{4.5}}^\frac1{4.5}
\right.\\&\hspace{120pt}\left.
+
\E{\left|\frac{\partial\psi_k}{\partial l_k}\right|^{4.5}}^\frac1{4.5}\E{\left|\psi_b\right|^{4.5}}^\frac1{4.5}\right)\E{\frac1{|l_k-l_b|^{1.8}}}^\frac1{1.8}
\\&\qquad
+\frac4{n^2}\sum_{k\neq b=1}^p\E{\left|\psi_k\right|^{4.5}}^\frac1{4.5}
\left(
\E{\left|\frac{\partial\psi_k}{\partial l_k}\right|^{4.5}}^\frac1{4.5}
\right.\\&\hspace{180pt}\left.
+\E{\left|\frac{\partial\psi_b}{\partial l_b}\right|^{4.5}}^\frac1{4.5}
\right)
\E{\frac1{|l_k-l_b|^{1.8}}}^\frac1{1.8}
\\&\qquad
+\frac4{n^2}\sum_{k\neq b=1}^p\E{\left|\psi_k\right|^{4.5}}^\frac1{4.5}
\left(
\E{\left|\frac{\partial\psi_k}{\partial l_k}\right|^{4.5}}^\frac1{4.5}
\right.\\&\hspace{180pt}\left.
+\E{\left|\frac{\partial\psi_b}{\partial l_k}\right|^{4.5}}^\frac1{4.5}
\right)
\E{\frac1{|l_k-l_b|^{1.8}}}^\frac1{1.8}
\\&\qquad
+\frac2{n^2}\sum_{k\neq b\neq e=1}^p\left(\E{\left|\psi_k\right|^{4.5}}^\frac1{4.5}+\E{\left|\psi_b\right|^{4.5}}^\frac1{4.5}\right)
\\&\hspace{75pt}
\left(\E{\left|\psi_k\right|^{4.5}}^\frac1{4.5}+\E{\left|\psi_e\right|^{4.5}}^\frac1{4.5}\right)
\E{\left|\frac1{(l_k-l_b)(l_k-l_e)}\right|^{1.8}}^\frac1{1.8}
\\&\qquad
+\frac2{n^2}\sum_{k\neq b\neq e=1}^p\left(\E{\left|\psi_k\right|^{4.5}}^\frac1{4.5}+\E{\left|\psi_b\right|^{4.5}}^\frac1{4.5}\right)
\\&\hspace{75pt}
\left(\E{\left|\psi_k\right|^{4.5}}^\frac1{4.5}+\E{\left|\psi_e\right|^{4.5}}^\frac1{4.5}\right)
\E{\left|\frac1{(l_k-l_b)(l_k-l_e)}\right|^{1.8}}^\frac1{1.8}.
\end{align*}}
Similarly, consider
\begin{align*}
R_2=\frac{n-p-1}n\sum_{k=1}^{p}\frac{\psi_k}{l_k}
+\frac2n\sum_{k=1}^{p}\frac{\partial\psi_k}{\partial l_k}
+\frac1n\sum_{k\neq b=1}^p\frac{\psi_k-\psi_b}{l_k-l_b},
\end{align*}
so that
{\setlength{\mathindent}{0pt}\begin{align*}
&\E{\vphantom{\bigg\vert}|R_2|}\leq 
\frac{|n-p-1|}n\sum_{k=1}^{p}\E{\left|\frac{\psi_k}{l_k}\right|}
+\frac2n\sum_{k=1}^{p}\E{\left|\frac{\partial\psi_k}{\partial l_k}\right|}
\\&\qquad
+\frac1n\sum_{k\neq b=1}^p
\left(\E{\left|\psi_k\right|^{2.25}}^\frac1{2.25}+\E{\left|\psi_b\right|^{2.25}}^\frac1{2.25}\right)
\E{\frac1{|l_k-l_b|^{1.8}}}^\frac1{1.8},
\end{align*}}
Moreover, for any $\epsilon>0$, 
{\setlength{\mathindent}{0pt}\begin{align*}
&\E{\sum_{k=1}^p\left|\frac{\psi^*_k}{l_k}\right|}
\leq
\frac{|n-p-1|}n\sum_{k=1}^p\E{\left|\frac{\psi_k}{l_k}\right|^{2(1+\epsilon)}}^{\frac1{1+\epsilon}}
+\frac4n\sum_{k=1}^p\E{\left|\frac{\psi_k}{l_k}\right|^{2(1+\epsilon)}}
\\&\qquad
\cdot\E{\left|\frac{\partial \psi_k}{\partial l_k}\right|^{2(1+\epsilon}}^{\frac1{2(1+\epsilon)}}
+\frac2n\sum_{k\neq b}^p\E{\frac1{|l_k-l_b|^{1.8(1+\epsilon)}}}^\frac1{1.8(1+\epsilon)}
\\&\qquad
\cdot\E{\left|\frac{\psi_k}{l_k}\right|^{4.5(1+\epsilon)}}^\frac1{4.5(1+\epsilon)}
\!\!\left(\E{\left|\psi_k\right|^{4.5(1+\epsilon)}}^\frac1{4.5(1+\epsilon)}+\E{\left|\psi_b\right|^{4.5(1+\epsilon)}}^\frac1{4.5(1+\epsilon)}\right)\!\!.
\end{align*}}
Note that for any $1\leq k\leq p$ and $\epsilon>0$,
\begin{align*}
\E{\left|\psi_k\right|^{4.5}}^\frac1{4.5}\leq\E{\left|\frac{\psi_k}{l_k}\right|^{9(1+\epsilon)}}^\frac1{9(1+\epsilon)}\E{\left|l_k\right|^\frac{9\epsilon}{1+\epsilon}}^\frac{1+\epsilon}{9\epsilon},
\end{align*}
and for any $m>1$, $\E{\left|l_k\right|^m}^2\leq \E{\tr{S^{2m}}}\leq \tr{\Sigma}^{2m}\E{\tr{S\Sigma^{-1}}^{2m}}
=\tr{\Sigma}^{2m}\E{(\chi^2_{np})^{2m}}<\infty$.
Now consider that, for any $m>1$,
\begin{align*}&
\left|\psi_k\right|^m
\leq 
2^{m-1}(\left|\hat\gamma_k\right|^m+\left|\hat\sigma^2\right|^m)
\\&
\left|\frac{\partial\psi_k}{\partial l_k}\right|^m
\leq 
2^{m-1}(\left|\frac{\partial\hat\gamma_k}{\partial l_k}\right|^m+\left|\frac{\partial\hat\sigma^2}{\partial l_k}\right|^m)
\\&
\left|\psi_k\frac{\partial^2\psi_k}{\partial l_k^2}\right|^m
=
\left|\hat\gamma_k+\hat\sigma^2\right|\left|\frac{\partial^2\hat\gamma_k}{\partial l_k^2}+\frac{\partial^2\hat\sigma^2}{\partial l_k^2}\right|^m.
\end{align*}
Therefore, since $\hat\Gamma$ satisfies the weak regularity conditions and $\hat\Sigma\in V_p(\hat\Gamma)$, we obtain
 $\E{|R_1|}<\infty$, $\E{|R_2|}<\infty$ and $\E{\sum_{k=1}^p\left|\frac{\psi^*_k}{l_k}\right|}<\infty$.

Therefore, all the regularity conditions of Lemmas \ref{lem:KONN} and \ref{lem:KONN2} are satisfied, and we have for $\Psi=\text{diag}(\psi_1,...,\psi_p)$
{\setlength{\mathindent}{0pt}\begin{align*}
&\E{L\left(\hat\Sigma,\Sigma\right)}
=
\frac1p\E{\tr{\left[\Sigma^{-1}O\Psi O'\right]^2}-2\tr{\Sigma^{-1}O\Psi O'}+p}
\\&\quad=
\frac1p\E{R_1-2R_2+p}
\\&\quad=
\E{
\frac{(n-p-1)(n-p-2)}{n^2p}\sum_{k=1}^{p}\frac{\psi^{2}_k}{l_k^2}
-\frac{(n-p-1)}{n^2p}\left(\sum_{k=1}^p\frac{\psi_k}{l_k}\right)^2
\right.\\&\qquad\left.
+\frac8{n^2p}\sum_{k=1}^{p}\left(\frac{\partial\psi_k}{\partial l_k}\right)^2
+\frac8{n^2p}\sum_{k=1}^{p}\psi_k\frac{\partial^2\psi_k}{\partial l_k^2}
+8\frac{n-p-1}{n^2p}\sum_{k=1}^{p}\frac{\psi_k}{l_k}\frac{\partial\psi_k}{\partial l_k}
\right.\\&\qquad\left.
+4\frac{n-p-1}{n^2p}\sum_{k\neq b=1}^p\frac{\psi_k}{l_k}\frac{\psi_k-\psi_b}{l_k-l_b}
+\frac8{n^2p}\sum_{k\neq b=1}^p\frac{\partial\psi^n_k}{\partial l_k}\frac{\psi_k-\psi_b}{l_k-l_b}
\right.\\&\qquad\left.
+\frac4{n^2p}\sum_{k\neq b=1}^p\psi^n_k\frac{\frac{\partial\psi_k}{\partial l_k}-\frac{\partial\psi_b}{\partial l_b}}{l_k-l_b}
+\frac4{n^2p}\sum_{k\neq b=1}^p\psi^n_k\frac{\frac{\partial\psi_k}{\partial l_k}-\frac{\partial\psi_b}{\partial l_k}}{l_k-l_b}
\right.\\&\qquad\left.
+\frac2{n^2p}\sum_{k\neq b\neq e=1}^p\frac{\psi_k}{l_k-l_b}\left(\frac{\psi_k-\psi_e}{l_k-l_e}-\frac{\psi_b-\psi_e}{l_b-l_e}\right)
\right.\\&\qquad\left.
+\frac2{n^2p}\sum_{k\neq b\neq e=1}^p\frac{\psi_k-\psi_b}{l_k-l_b}\frac{\psi_k-\psi_e}{l_k-l_e}
-2\frac{n-p-1}{np}\sum_{k=1}^{p}\frac{\psi_k}{l_k}
\right.\\&\qquad\left.
-\frac4{np}\sum_{k=1}^{p}\frac{\partial\psi_k}{\partial l_k}
-\frac2{np}\sum_{k\neq b=1}^p\frac{\psi_k-\psi_b}{l_k-l_b}
+1}.
\end{align*}}
We can now collect the terms of order $1$ and $1/p$, defining
\begin{align*}
&F(\hat\Sigma)
=\frac{(n-p-1)(n-p-2)}{n^2p}\sum_{k=1}^{p}\frac{\psi_k^2}{l_k^2}
-\frac{n-p-1}{n^2p}\left(\sum_{k=1}^p\frac{\psi_k}{l_k}\right)^2
\\&\quad
+4\frac{n-p-1}{n^2p}\sum_{k\neq b=1}^p\frac{\psi_k}{l_k}\frac{\psi_k-\psi_b}{l_k-l_b}
+\frac2{n^2p}\sum_{k\neq b\neq e=1}^p\frac{\psi_k-\psi_b}{l_k-l_b}\frac{\psi_k-\psi_e}{l_k-l_e}
\\&\quad
+\frac2{n^2p}\sum_{k\neq b\neq e=1}^p\frac{\psi_k}{l_k-l_b}\left(\frac{\psi_k-\psi_e}{l_k-l_e}-\frac{\psi_b-\psi_e}{l_b-l_e}\right)
\\&\quad
-2\frac{n-p-1}{np}\sum_{k=1}^{p}\frac{\psi_k}{l_k}
-\frac2{np}\sum_{k\neq b=1}^p\frac{\psi_k-\psi_b}{l_k-l_b}
+1
\end{align*}
and
\begin{align*}
&G(\hat\Sigma)
=
\frac8{n^2p}\sum_{k=1}^{p}\left(\frac{\partial\psi_k}{\partial l_k}\right)^2
+\frac8{n^2p}\sum_{k=1}^{p}\psi_k\frac{\partial^2\psi_k}{\partial l_k^2}
+8\frac{n-p-1}{n^2p}\sum_{k=1}^{p}\frac{\psi_k}{l_k}\frac{\partial\psi_k}{\partial l_k}
\\&\quad
+\frac8{n^2p}\sum_{k\neq b=1}^p\frac{\partial\psi_k}{\partial l_k}\frac{\psi_k-\psi_b}{l_k-l_b}
+\frac4{n^2p}\sum_{k\neq b=1}^p\psi_k\frac{\frac{\partial\psi_k}{\partial l_k}-\frac{\partial\psi_b}{\partial l_b}}{l_k-l_b}
\\&\quad
+\frac4{n^2p}\sum_{k\neq b=1}^p\psi_k\frac{\frac{\partial\psi_k}{\partial l_k}-\frac{\partial\psi_b}{\partial l_k}}{l_k-l_b}
-\frac4{np}\sum_{k=1}^{p}\frac{\partial\psi_k}{\partial l_k}
\end{align*}
so that $\E{L(\hat\Sigma,\Sigma)}=\E{F(\hat\Sigma)+G(\hat\Sigma)}$, with $\E{\left|F(\hat\Sigma)+G(\hat\Sigma)\right|}\leq\E{|R_1|}+2\E{|R_2|}+p<\infty$, as desired. Plugging in $\psi_k=\hat\gamma_k+\hat\sigma^2$ yields, after a bit of algebra:
{\setlength{\mathindent}{5pt}\begin{align}
&F(\hat\Gamma+\hat\sigma^2I)
=
\frac{(n-p-1)(n-p-2)}{n^2p}\sum_{k=1}^{\rho}\frac{\hat\gamma_k^2}{l_k^2}
\notag\\&\quad
+2\frac{(n-p-1)(n-p-2)}{n^2p}\sum_{k=1}^{\rho}\frac{\hat\gamma_k\hat\sigma^2}{l_k^2}
+\frac{(n-p-1)(n-p-2)}{n^2p}\sum_{c=1}^{p}\frac{\hat\sigma^4_\rho}{l_c^2}
\notag\\&\quad
-\frac{n-p-1}{n^2p}\left(\sum_{k=1}^\rho\frac{\hat\gamma_k}{l_k}\right)^2
-2\frac{n-p-1}{n^2p}\sum_{c=1}^p\frac{\hat\sigma^2}{l_c}\sum_{k=1}^\rho\frac{\hat\gamma_k}{l_k}
\notag\\&\quad
-\frac{n-p-1}{n^2p}\left(\sum_{c=1}^p\frac{\hat\sigma^2}{l_c}\right)^2
+4\frac{n-p-1}{n^2p}\sum_{k\neq b}^\rho\frac{\hat\gamma_k}{l_k}\frac{\hat\gamma_k-\hat\gamma_b}{l_k-l_b}
\notag\\&\quad
+4\frac{n-p-1}{n^2p}\sum_{k=1}^\rho\sum_{c=\rho+1}^p\frac{\hat\gamma_k}{l_k}\frac{\hat\gamma_k}{l_k-l_c}
+4\frac{n-p-1}{n^2p}\sum_{k=1}^\rho\sum_{c=\rho+1}^p\frac{\hat\sigma^2}{l_c}\frac{\hat\gamma_k}{l_k-l_c}
\notag\\&\quad
+\frac2{n^2p}\sum_{k\neq b\neq e=1}^\rho\frac{\hat\gamma_k-\hat\gamma_b}{l_k-l_b}\frac{\hat\gamma_k-\hat\gamma_e}{l_k-l_e}
+\frac2{n^2p}\sum_{k\neq b=1}^\rho\sum_{c=\rho+1}^p\frac{\hat\gamma_k}{l_k-l_c}\frac{\hat\gamma_b}{l_b-l_c}
\notag\\&\quad
-\frac6{n^2p}\sum_{k\neq b}^\rho\sum_{c=\rho+1}^p\frac{\hat\gamma_k+\hat\sigma^2}{l_k-l_c}\frac{\hat\gamma_b}{l_b-l_c}
+\frac6{n^2p}\sum_{k=1}^\rho\sum_{c\neq d}^p\frac{\hat\gamma_k+\hat\sigma^2}{l_k-l_c}\frac{\hat\gamma_k}{l_k-l_d}
\notag\\&\quad
-\frac2{n^2p}\sum_{k=1}^\rho\sum_{c\neq d}^p\frac{\hat\gamma_k}{l_k-l_c}\frac{\hat\gamma_k}{l_k-l_d}
+\frac2{n^2p}\sum_{k\neq b\neq e=1}^\rho\frac{\hat\gamma_k}{l_k-l_b}\left(\frac{\hat\gamma_k-\hat\gamma_e}{l_k-l_e}-\frac{\hat\gamma_b-\hat\gamma_e}{l_b-l_e}\right)
\notag\\&\quad
+\frac4{n^2p}\sum_{k\neq b=1}^p\sum_{c=\rho+1}^p\frac{\hat\gamma_k-\hat\gamma_b}{l_k-l_b}\frac{\hat\gamma_k}{l_k-l_c}
+\frac6{n^2p}\sum_{k\neq b}^\rho\sum_{c=\rho+1}^p\frac{\hat\gamma_k-\hat\gamma_b}{l_k-l_b}\frac{\hat\gamma_k+\hat\sigma^2}{l_k-l_c}
\notag\\&\quad
-2\frac{n-p-1}{np}\sum_{k=1}^{\rho}\frac{\hat\gamma_k}{l_k}
-2\frac{n-p-1}{np}\sum_{c=1}^{p}\frac{\hat\sigma^2}{l_c}
-\frac2{np}\sum_{k\neq b=1}^\rho\frac{\hat\gamma_k-\hat\gamma_k}{l_k-l_b}
\notag\\&\quad
-\frac4{np}\sum_{k=1}^\rho\sum_{c=\rho+1}^p\frac{\hat\gamma_k}{l_k-l_c}
+1
\label{eq:CF-ureF}
\end{align}}
and
{\setlength{\mathindent}{5pt}\begin{align}
&G(\hat\Gamma+\hat\sigma^2I)
=
\frac8{n^2p}\sum_{k=1}^{\rho}\left(\frac{\partial\hat\gamma_k}{\partial l_k}\right)^2
+\frac{16}{n^2p}\sum_{k=1}^{\rho}\frac{\partial\hat\gamma_k}{\partial l_k}\frac{\partial\hat\sigma^2}{\partial l_k}
+\frac8{n^2p}\sum_{k=1}^{p}\left(\frac{\partial\hat\sigma^2}{\partial l_k}\right)^2
\notag\\&\quad
+\frac8{n^2p}\sum_{k=1}^\rho\hat\gamma_k\frac{\partial^2\hat\gamma_k}{\partial l_k^2}
+\frac8{n^2p}\sum_{k=1}^{p}\hat\sigma^2\frac{\partial^2\hat\sigma^2}{\partial l_k^2}
+8\frac{n-p-1}{n^2p}\sum_{k=1}^{\rho}\frac{\hat\gamma_k}{l_k}\frac{\partial\hat\gamma_k}{\partial l_k}
\notag\\&\quad
+8\frac{n-p-1}{n^2p}\sum_{k=1}^{\rho}\frac{\hat\gamma_k}{l_k}\frac{\partial\hat\sigma^2}{\partial l_k}
+8\frac{n-p-1}{n^2p}\sum_{k=1}^{\rho}\frac{\hat\sigma^2}{l_k}\frac{\partial\hat\gamma_k}{\partial l_k}
\notag\\&\quad
+8\frac{n-p-1}{n^2p}\sum_{k=1}^{p}\frac{\hat\sigma^2}{l_k}\frac{\partial\hat\sigma^2}{\partial l_k}
+\frac8{n^2p}\sum_{k\neq b=1}^\rho\frac{\partial\hat\gamma_k}{\partial l_k}\frac{\hat\gamma_k-\hat\gamma_k}{l_k-l_b}
\notag\\&\quad
+\frac8{n^2p}\sum_{k=1}^\rho\sum_{c=\rho+1}^p\frac{\partial\hat\gamma_k}{\partial l_k}\frac{\hat\gamma_k}{l_k-l_c}
+\frac8{n^2p}\sum_{k=1}^\rho\sum_{c=\rho+1}^p\frac{\partial\hat\sigma^2}{\partial l_c}\frac{\hat\gamma_k}{l_k-l_c}
\notag\\&\quad
+\frac4{n^2p}\sum_{k\neq b=1}^\rho\hat\gamma_k\frac{\frac{\partial\hat\gamma_k}{\partial l_k}-\frac{\partial\hat\gamma_b}{\partial l_b}}{l_k-l_b}
+\frac4{n^2p}\sum_{k=1}^\rho\sum_{c=\rho+1}^p\hat\gamma_k\frac{\frac{\partial\hat\gamma_k}{\partial l_k}-\frac{\partial\hat\sigma^2}{\partial l_c}}{l_k-l_c}
\notag\\&\quad
+\frac4{n^2p}\sum_{k=1}^\rho\sum_{c=\rho+1}^p\hat\sigma^2\frac{\frac{\partial\hat\gamma_k}{\partial l_k}-\frac{\partial\hat\sigma^2}{\partial l_c}}{l_k-l_c}
+\frac4{n^2p}\sum_{c\neq d=\rho+1}^p\hat\sigma^2\frac{\frac{\partial\hat\sigma^2}{\partial l_c}-\frac{\partial\hat\sigma^2}{\partial l_d}}{l_c-l_d}
\notag\\&\quad
+\frac4{n^2p}\sum_{k\neq b=1}^\rho\hat\gamma_k\frac{\frac{\partial\hat\gamma_k}{\partial l_k}-\frac{\partial\hat\gamma_b}{\partial l_k}}{l_k-l_b}
+\frac4{n^2p}\sum_{k=1}^\rho\sum_{c=\rho+1}^p\hat\gamma_k\frac{\frac{\partial\hat\gamma_k}{\partial l_k}-\frac{\partial\hat\sigma^2}{\partial l_k}}{l_k-l_c}
\notag\\&\quad
+\frac4{n^2p}\sum_{k=1}^\rho\sum_{c=\rho+1}^p\hat\sigma^2\frac{\frac{\partial\hat\gamma_k}{\partial l_c}-\frac{\partial\hat\sigma^2}{\partial l_c}}{l_k-l_c}
+\frac4{n^2p}\sum_{c\neq d=\rho+1}^p\hat\sigma^2\frac{\frac{\partial\hat\sigma^2}{\partial l_c}-\frac{\partial\hat\sigma^2}{\partial l_c}}{l_c-l_d}
\notag\\&\quad
-\frac4{np}\sum_{k=1}^\rho\frac{\partial\hat\gamma_k}{\partial l_k}
-\frac4{np}\sum_{k=1}^{p}\frac{\partial\hat\sigma^2}{\partial l_k}.
\label{eq:CF-ureG}
\end{align}}

For the second part of the theorem, we see that
{\setlength{\mathindent}{0pt}\begin{align*}
&\E{\left|F(\hat\Sigma)\right|}
\leq
1+
\frac{|n-p-1||n-p-2|}{n^2}\E{\sup_{p\in\mathbb{N}^*}\max_{1\leq k\leq p}\left|\frac{\psi_k}{l_k}\right|^2}
\\&\quad
+\frac{|n-p-1|p}{n^2}\E{\sup_{p\in\mathbb{N}^*}\max_{1\leq k\leq p}\left|\frac{\psi_k}{l_k}\right|^2}
\\&\quad
+4\frac{|n-p-1|(p-1)}{n^2}\E{\sup_{p\in\mathbb{N}^*}\max_{1\leq k\leq p}\left|\frac{\psi_k}{l_k}\right|^2}^{\frac12}\E{\sup_{p\in\mathbb{N}^*}\max_{1\leq k\neq b\leq p}\left|\frac{\psi_k-\psi_b}{l_k-l_b}\right|^2}^{\frac12}
\\&\quad
+\frac{2(p-1)(p-2)}{n^2}\E{\sup_{p\in\mathbb{N}^*}\max_{1\leq k\neq b\leq p}\left|\frac{\psi_k-\psi_b}{l_k-l_b}\right|^2}
\\&\quad
+\frac{2(p-1)(p-2)}{n^2}\E{\sup_{p\in\mathbb{N}^*}\max_{1\leq k\leq p}\left|\frac{\psi_k}{l_k}\right|^2}^{\frac12}
\\&\qquad\qquad
\cdot\E{\sup_{p\in\mathbb{N}^*}\max_{1\leq k\neq b\neq e\leq p}\left|\frac{l_k}{l_k-l_b}\left(\frac{\psi_k-\psi_e}{l_k-l_e}-\frac{\psi_b-\psi_e}{l_b-l_e}\right)\right|^2}^{\frac12}
\\&\quad
-2\frac{|n-p-1|}{n}\E{\sup_{p\in\mathbb{N}^*}\max_{1\leq k\leq p}\left|\frac{\psi_k}{l_k}\right|}
-\frac{2(p-1)}{n}\E{\sup_{p\in\mathbb{N}^*}\max_{1\leq k\neq b\leq p}\left|\frac{\psi_k-\psi_b}{l_k-l_b}\right|}
\end{align*}}
and
{\setlength{\mathindent}{0pt}\begin{align*}
&p\E{\left|G(\hat\Sigma)\right|}
\leq
\frac{8p}{n^2}\E{\sup_{p\in\mathbb{N}^*}\max_{1\leq k\leq p}\left|\frac{\partial\psi_k}{\partial l_k}\right|^2}
+\frac{8p}{n^2}\E{\sup_{p\in\mathbb{N}^*}\max_{1\leq k\leq p}\left|\psi_k\frac{\partial^2\psi_k}{\partial l_k^2}\right|}
\\&\quad
+8\frac{|n-p-1|p}{n}\E{\sup_{p\in\mathbb{N}^*}\max_{1\leq k\leq p}\left|\frac{\psi_k}{l_k}\right|^2}^{\frac12}\E{\sup_{p\in\mathbb{N}^*}\max_{1\leq k\leq p}\left|\frac{\partial\psi_k}{\partial l_k}\right|^2}^\frac12
\\&\quad
+\frac{8(p-1)p}{n^2}\E{\sup_{p\in\mathbb{N}^*}\max_{1\leq k\leq p}\left|\frac{\partial\psi_k}{\partial l_k}\right|^2}^\frac12\E{\sup_{p\in\mathbb{N}^*}\max_{1\leq k\neq b\leq p}\left|\frac{\psi_k-\psi_b}{l_k-l_b}\right|^2}^{\frac12}
\\&\quad
+\frac{4(p-1)p}{n^2}\E{\sup_{p\in\mathbb{N}^*}\max_{1\leq k\leq p}\left|\frac{\psi_k}{l_k}\right|^2}^{\frac12}\!\!\!
\E{\sup_{p\in\mathbb{N}^*}\max_{\substack{1\leq k\\ \neq b\leq p}}\left|\frac{l_k}{l_k-l_b}\left(\frac{\partial\psi_k}{\partial l_k}-\frac{\partial\psi_b}{\partial l_b}\right)\right|^2}^\frac12
\\&\quad
+\frac{4(p-1)p}{n^2}\E{\sup_{p\in\mathbb{N}^*}\max_{1\leq k\leq p}\left|\frac{\psi_k}{l_k}\right|^2}^{\frac12}\!\!\!
\E{\sup_{p\in\mathbb{N}^*}\max_{\substack{1\leq k\\ \neq b\leq p}}\left|\frac{l_k}{l_k-l_b}\left(\frac{\partial\psi_k}{\partial l_k}-\frac{\partial\psi_b}{\partial l_k}\right)\right|^2}^\frac12
\\&\quad
-\frac{4p}{n}\E{\sup_{p\in\mathbb{N}^*}\max_{1\leq k\leq p}\left|\frac{\partial\psi_k}{\partial l_k}\right|}.
\end{align*}}
Again, one can proceed like in the weak case to see that if $\hat\Gamma$ satisfies its strong regularity conditions and $\hat\Sigma\in\tilde V_p(\hat\Gamma)$, we get $\lim\limits_{n\rightarrow\infty}\E{\big|F(\hat\Sigma)\big|}<\infty$ and $\lim\limits_{n\rightarrow\infty}p\E{\big|G(\hat\Sigma)\big|}<\infty$ as $\lim\limits_{n\rightarrow\infty}\frac{p_n}n\in(0,1)$, as desired.
\end{proof}

\begin{proof}[\bf Proof of Proposition \ref{prop:MINM}]
First note any element of $C_c^\infty(H_p;\mathbb{R})$, the space of smooth, compactly supported functions from $H_p$ to $\mathbb{R}$, satisfies the weak regularity conditions of Definition \ref{defn:WRC} for any weak $\hat\Gamma$.
Now, if $\tilde\Sigma$ is a minimum over $V_p(\hat\Gamma)$, then for any $\eta\in C_c^\infty(H^p_+;\mathbb{R})$ and any $t\in\mathbb{R}$, $\tilde\sigma^2+t\eta$ satisfies the $\hat\Gamma$-weak regularity conditions too and  $\epsilon\rightarrow\E{F(\hat\Gamma+[\tilde\sigma^2+t\eta]I)}$ is smooth over $\mathbb{R}$ with a minimum at $t=0$.
But we find that the first variation satisfies
{\setlength{\mathindent}{5pt}\begin{align}
&\frac{\partial}{\partial t}\E{F(\hat\Gamma+[\tilde\sigma^2+t\eta]I)}\bigg\vert_{t=0}
=
\E{\eta\cdot\left(
2\frac{(n-p-1)(n-p-2)}{n^2p}\sum_{k=1}^{{\hat\rho}}\frac{\hat\gamma_k}{l_k^2}
\right.\right.\notag\\&\quad\left.\left.
+2\frac{(n-p-1)(n-p-2)}{n^2p}\sum_{c=1}^{p}\frac{\tilde\sigma^2}{l_c^2}
-2\frac{n-p-1}{n^2p}\sum_{c=1}^p\frac{1}{l_c}\sum_{k=1}^{\hat\rho}\frac{\hat\gamma_k}{l_k}
\right.\right.\notag\\&\quad\left.\left.
-2\frac{n-p-1}{n^2p}\sum_{c=1}^p\frac{1}{l_c}\sum_{c=1}^p\frac{\tilde\sigma^2}{l_c}
+4\frac{n-p-1}{n^2p}\sum_{k=1}^{\hat\rho}\sum_{c={\hat\rho}+1}^p\frac{1}{l_c}\frac{\hat\gamma_k}{l_k-l_c}
\right.\right.\notag\\&\quad\left.\left.
-\frac6{n^2p}\sum_{k\neq b}^{\hat\rho}\sum_{c={\hat\rho}+1}^p\frac{1}{l_k-l_c}\frac{\hat\gamma_b}{l_b-l_c}
+\frac6{n^2p}\sum_{k=1}^{\hat\rho}\sum_{c\neq d={\hat\rho}+1}^p\frac{1}{l_k-l_c}\frac{\hat\gamma_k}{l_k-l_d}
\right.\right.\notag\\&\quad\left.\left.
+\frac6{n^2p}\sum_{k\neq b}^{\hat\rho}\sum_{c={\hat\rho}+1}^p\frac{\hat\gamma_k-\hat\gamma_b}{l_k-l_b}\frac{1}{l_k-l_c}
-2\frac{n-p-1}{np}\sum_{c=1}^{p}\frac{1}{l_k}
\right)}
\notag\\&\qquad=
\E{\eta \cdot F_1\big[l,\hat\rho,\hat\gamma,\tilde\sigma^2\big]}
\notag\\&\qquad=
\int_{H_p}\eta(l_1,...,l_p)F_1\big[l,\hat\rho,\hat\gamma,\tilde\sigma^2\big] \cdot f_{l_1,...,l_p}(l_1,...,l_p)\prod_{i=1}^pdl_i,
\label{eq:MINM-F1def}
\end{align}}
where $f_{l_1,...,l_p}(l_1,...,l_p)$ stands for the p.d.f. of $l_1>...>l_p$. Now, if this equals zero for all $\eta\in C_c^\infty(H^p_+;\mathbb{R})$, by the fundamental lemma of calculus of variations (see, say, \cite{GiaquintaHildebrandt96} ch.\ 2.2) we obtain $F_1\big[l,\hat\rho,\hat\gamma,\tilde\sigma^2\big] \cdot f_{l_1,...,l_p}(l_1,...,l_p)\equiv 0$, that is, $F_1\big[l,\hat\rho,\hat\gamma,\tilde\sigma^2\big]\equiv 0$. This implies $\tilde\sigma^2=A/B$.

For the second statement, notice that by construction, the space of $\hat\Gamma$-weak noise estimators is convex; let $\hat\sigma^2$ be some arbitrary element. Define $H: [0,1]\rightarrow \mathbb{R}$ to be the smooth function
\[
H(t)=\E{F(\hat\Gamma+[\tilde\sigma^2+t(\hat\sigma^2-\tilde\sigma^2)]I)}.
\]
Notice that, for  $F_1$ as in eq.\ (\ref{eq:MINM-F1def}),
\begin{align*}
H'(0)=\E{(\hat\sigma^2-\tilde\sigma^2)\cdot F_1\big[l,\hat\rho,\hat\gamma,\tilde\sigma^2\big]}=0,
\end{align*}
since $\tilde\sigma^2=A/B$.
Moreover,
{\setlength{\mathindent}{5pt}\begin{align}
&H''(t)=\frac{\partial^2}{\partial t^2}\E{F(\hat\Gamma_r+[\tilde\sigma^2+t(\hat\sigma^2-\tilde\sigma^2)]I)}
\notag\\&\quad=\;
2\frac{n-p-1}{n^2p}\E{\left(\hat\sigma^2-\tilde\sigma^2\right)^2\left(
(n-p-2)\sum_{c=1}^{p}\frac{1}{l_c^2}
-\left(\sum_{c=1}^p\frac{1}{l_c}\right)^2
\right)}
\label{eq:MINM-secder}\\&\quad\geq\;
2\frac{(n-p-1)(n-2p-2)}{n^2p^2}\E{\left(\hat\sigma^2-\tilde\sigma^2\right)^2\left(
\sum_{c=1}^p\frac{1}{l_c}\right)^2}
\quad\left(\parbox{1.65cm}{Jensen's\\inequality}\right)
\notag\\&\quad\geq\;
0,
\notag
\end{align}}
for $n\geq 2p+2$. Therefore, by integration by parts
\begin{align*}
&\E{F(\hat\Gamma+\hat\sigma^{2}I)}-\E{F(\hat\Gamma+\tilde\sigma^2I)}
\\&\qquad=
H(1)-H(0)=\int_0^1 (1-t)H''(t)\,dt
\; \geq0.
\end{align*}
Since this is true for any $\hat\Gamma$-weak noise estimator $\hat\sigma^2$, we conclude that $\tilde\Sigma$ is a minimum over $V_p(\hat\Gamma)$, as desired.
\end{proof}

\subsection{Proofs for Section \ref{sec:P}}

\begin{proof}[\bf Proof of Lemma \ref{lem:LLN}]
To simplify notation in what follows, define $c_\pm=[1\pm\sqrt{c}]^2$. In the proof of Theorem 2.3 in \cite{Nadler08}, p.\ 2807, it is remarked that for $\sigma^2=1$ and $\rho=1$, the empirical distribution of $l_2,...,l_p$ converges a.s. to a Mar\v{c}enko-Pastur distribution with parameter $c$. That is, for the truncated empirical spectral measure
$d\mu_p=\frac1{p-\rho}\sum_{c=\rho+1}^pd\delta_{l_i}$
(where the $\delta$ are Dirac measures) we have weak convergence $d\mu_p \Rightarrow d\mu_\text{MP(c)}$ a.s. where
\begin{align*}
d\mu_\text{MP(c)}= \frac{\sqrt{(c_+-t)(t-c_-)}}{2\pi ct}\1{\vphantom{\bigg\vert}c_-\leq t\leq c_+}dt.
\end{align*}
As noted by the author, the argument carries on for $\rho\neq 1$, and if $\sigma\neq 1$ we can apply the argument to $l_{\rho+1}/\sigma^2,...,l_p/\sigma^2$ to obtain $d\mu_p \Rightarrow d\mu_{\sigma^2\text{MP}(c)}$ a.s., where
\begin{align*}
d\mu_{\sigma^2\text{MP}(c)}=& \frac{\sqrt{(\sigma^2c_+-t)(t-\sigma^2c_-)}}{2\pi c\sigma^2t}\1{\vphantom{\bigg\vert}\sigma^2c_-\leq t\leq \sigma^2c_+}dt.
\end{align*}
\textit{Part (i)} Applying the results of \cite{BaikSilverstein06}, Theorem 1.1 to $l_k/\sigma^2$ and $l_{\rho+1}/\sigma^2$ we obtain:
\begin{align*}
&l_k\xrightarrow[n\rightarrow\infty]{\text{a.s.}}\frac{(\gamma_k+\sigma^2)(\gamma_k+c\sigma^2)}{\gamma_k},
\qquad\qquad
l_{\rho+1}\xrightarrow[n\rightarrow\infty]{\text{a.s.}}c_+\sigma^2.
\end{align*}
We will write $\bar l_k=(\gamma_k+\sigma^2)(\gamma_k+c\sigma^2)/\gamma_k$ to simplify notation. Let the underlying sample space be denoted $\Omega$. Since $\gamma_\rho>\sqrt{c}\sigma^2$, we have $\bar l_k-c_+\sigma^2=M$ for some $M>0$. Therefore, for almost all $\omega\in\Omega$, there exists an $N_1(\omega)$ such that $\forall n>N(\omega)$, $l^p_k(\omega)-l^p_p(\omega)>...>l^p_k(\omega)-l^p_{\rho+1}(\omega)>M/2$ and $l^p_p(\omega)<...<l^p_{\rho+1}(\omega)<c_+\sigma^2+M/2$. Moreover, for any $\epsilon>0$, there must be an $N_2(\omega)$ such that for all $n>N_2(\omega,\epsilon)$, $|\bar l_k-l^p_k(\omega)|<\epsilon$. Notice that we can write, for any $n>N_1(\omega)\vee N_2(\omega,\epsilon)$,
\begin{align*}
\frac1{p-\rho}\sum_{c=\rho+1}^p\frac{l^p_c(\omega)}{l^p_k(\omega)-l^p_c(\omega)}
=&\frac1{p-\rho}\sum_{c=\rho+1}^p\frac{[\bar l_k-l^p_k(\omega)]l^p_c(\omega)}{[l^p_k(\omega)-l^p_c(\omega)][\bar l_k-l^p_c(\omega)]}
\\&
+\int_{(0,c_+\sigma^2+\frac{M}2)}\frac{t}{\bar l_k-t}d\mu_p(t,\omega),
\end{align*}
and
\begin{align}
\left|\frac1{p-\rho}\sum_{c=\rho+1}^p\frac{[\bar l_k-l^p_k(\omega)]l^p_c(\omega)}{[l^p_k(\omega)-l^p_c(\omega)][\bar l_k-l^p_c(\omega)]}\right|<\left[\frac{4}{M^2}c_+\sigma^2+\frac 2M\right]\epsilon.\label{eq:lemLLN-bound1-1}
\end{align}
But $0<t/(\bar l_k-t)<1+2c_+\sigma^2/M$ on $t\in(0,c_+\sigma^2+\frac{M}2)$, and it is certainly continuous. Therefore, by the portmanteau theorem of weak convergence of measures,
\begin{align}
&\lim_{n\rightarrow\infty}\int_{(0,c_+\sigma^2+\frac{M}2)}\frac{t}{\bar l_k-t}d\mu_p(t,\omega)=\int_{(0,c_+\sigma^2+\frac{M}2)}\frac{t}{\bar l_k-t}d\mu_{\sigma^2\text{MP}(c)}(t)
\notag\\&\quad
=\int_{c_-\sigma^2}^{c^+\sigma^2}\frac{t}{\bar l_k-t}\frac{\sqrt{(\sigma^2c_+-t)(t-\sigma^2c_-)}}{2\pi c\sigma^2t}dt
\notag\\&\quad
=\frac{2\sigma^2}{\bar l_k-(c+1)\sigma^2+\sqrt{\vphantom{\big\vert}[\bar l_k-(c+1)\sigma^2]^2-4c\sigma^4}}.\label{eq:lemLLN-limit1}
\end{align}
But by definition, $\bar l_k=(\gamma_k+\sigma^2)(\gamma_k+c\sigma^2)/\gamma_k$ which can be rewritten as a quadratic equation in $\gamma_k$,
\begin{align*}
\gamma_k^2-[(\bar l_k-(c+1)\sigma^2]\gamma_k+c\sigma^4=0.
\end{align*}
The roots are
\begin{align*}
\frac12[\bar l_k-(c+1)\sigma^2]\pm\frac12\sqrt{[\bar l_k-(c+1)\sigma^2]^2-4c\sigma^4},
\end{align*}
and notice that twice the negative root satisfies
\begin{align*}
&[\bar l_k-(c+1)\sigma^2]-\sqrt{[\bar l_k-(c+1)\sigma^2]^2-4c\sigma^4}
\\&\qquad=\;
[\bar l_k-(c+1)\sigma^2]
\\&\qquad\qquad\quad
-\sqrt{\left([\bar l_k-(c+1)\sigma^2]-2\sqrt{c}\sigma^2\right)\left([\bar l_k-(c+1)\sigma^2]+2\sqrt{c}\sigma^2\right)}
\\&\qquad\leq\;
[\bar l_k-(c+1)\sigma^2]-[\bar l_k-(c+1)\sigma^2]+2\sqrt{c}\sigma^2
\\&\qquad=\;
2\sqrt{c}\sigma^2.
\end{align*}
Therefore, $\gamma_k$ cannot equal the negative root, because it would imply $\gamma_k\leq \sqrt{c}\sigma^2$, a contradiction. So  $\gamma_k$ equals the positive root, which, plugged in eq.\ (\ref{eq:lemLLN-limit1}), yields
\begin{align*}
&\lim_{n\rightarrow\infty}\int_{(0,c_+\sigma^2+\frac{M}2)}\frac{t}{\bar l_k-t}d\mu_p(t,\omega)=\frac{\sigma^2}{\gamma_k}.
\end{align*}
Hence, for some $N_3(\omega,\epsilon)$, we have for all $n>N_3(\omega,\epsilon)$
\begin{align*}
&\left|\int_{(0,c_+\sigma^2+\frac{M}2)}\frac{t}{\bar l_k-t}d\mu_p(t,\omega)-\frac{\sigma^2}{\gamma_k}\right|<\epsilon.
\end{align*}
Therefore, from eq.\ (\ref{eq:lemLLN-bound1-1}), we obtain that for $n>N_1(\omega)\vee N_2(\omega,\epsilon)\vee N_3(\omega,\epsilon)$
\begin{align*}
&\left|\frac1{p-\rho}\sum_{c=\rho+1}^p\frac{l^p_c(\omega)}{l^p_k(\omega)-l^p_c(\omega)}
-\frac{\sigma^2}{\gamma_k}\right|
<
\left[\frac{4}{M^2}c_+\sigma^2+\frac 2M+1\right]\epsilon.
\end{align*}
Since $\epsilon>0$ is arbitrary, we conclude that for almost all $\omega\in\Omega$,
\begin{align*}
&\lim_{n\rightarrow\infty}\frac1{p-\rho}\sum_{c=\rho+1}^p\frac{l^p_c(\omega)}{l^p_k(\omega)-l^p_c(\omega)}
=\frac{\sigma^2}{\gamma_k},
\end{align*}
as desired.

\textit{Part (ii)} The proof is similar in spirit to the previous one, but simpler. Applying the results of \cite{BaikSilverstein06}, Theorem 1.1 to $l_p/\sigma^2$ we obtain:
\begin{align*}
l_p\xrightarrow[n\rightarrow\infty]{\text{a.s.}}c_-\sigma^2.
\end{align*}
Therefore, for almost all $\omega\in\Omega$, there is a $N(\omega)$ such that $\forall n>N(\omega)$, $l^p_{\rho+1}(\omega)>...>l^p_p(\omega)>c_-\sigma^2/2$. Hence, for $n>N(\omega)$,
\begin{align*}
&\frac1{p-\rho}\sum_{c=\rho+1}^p\frac{1}{l_c^{pm}(\omega)}
=\int_{\left(\frac{c_-\sigma^2}2,\infty\right)}\frac1{t^m}d\mu_p(t,\omega),
\end{align*}
Certainly, $0<1/t^m<\left(\frac2{c_-\sigma^2}\right)^m$ for $t\in \left(\frac{c_-\sigma^2}2,\infty\right)$, and $1/t^m$ is continuous there. Thus, by the portmanteau theorem of weak convergence of measures, we have for almost all $\omega$
\begin{align*}
&\lim_{n\rightarrow\infty}\frac1{p-\rho}\sum_{c=\rho+1}^p\frac{1}{l_c^{pm}(\omega)}
=\int_{\left(\frac{c_-\sigma^2}2,\infty\right)}\frac1{t^m}d\mu_{\sigma^2\text{MP}(c)}(t)
\\&\qquad=
\int_{c_-\sigma^2}^{c^+\sigma^2}\frac1{t^m}\frac{\sqrt{(\sigma^2c_+-t)(t-\sigma^2c_-)}}{2\pi c\sigma^2t}dt
\\&\qquad=
\frac1{(1-c)^{2m-1}}\frac1{\sigma^{2m}},
\end{align*}
which concludes the proof.
\end{proof}

\begin{proof}[\bf Proof of Theorem \ref{thm:NORM}]
Recall the definition of $\tilde\sigma^2$ as $A/B$ from proposition \ref{prop:MINM}.

~\newline
{\it Part (i)} It follows easily from the Lemma \ref{lem:LLN} that
\begin{equation}A\xrightarrow[n\rightarrow\infty]{\text{a.s.}}\frac1{\sigma^2}
\qquad\text{ and }\qquad
B\xrightarrow[n\rightarrow\infty]{\text{a.s.}}\frac1{\sigma^{4}}.\label{eq:NORM-d1}
\end{equation}
The result then follows immediately. 

~\newline
{\it Part (ii)} 
Denote by $\bar\gamma_k$ the a.s. finite limit $\lim\limits_{n\rightarrow\infty}\hat\gamma_k$. Start by writing $A=\frac{n-p-1}{np}\sum\limits_{c=1}^p\frac1{l_c}+E_n$. We find
{\setlength{\mathindent}{5pt}\begin{align}
&n(\tilde\sigma^2-\sigma^2)
\notag\\&\quad=
n\sigma^2\left[\frac{\frac{n-p-1}{np}\sum\limits_{c=1}^p\frac{\sigma^2}{l_c}}{\frac{(n-p-1)(n-p-2)}{n^2p}\sum\limits_{c=1}^{p}\frac{\sigma^4}{l_c^2}
-\frac{(n-p-1)p}{n^2}\left(\frac1p\sum\limits_{c=1}^p\frac{\sigma^2}{l_c}\right)^2}-1\right]+\frac{nE_n}{B}
\notag\\&\quad=
\frac{\sigma^2}{\sigma^4 B}\left[
n\left(\frac{n-p-1}{np}\sum\limits_{c=1}^p\frac{\sigma^2}{l_c}-1\right)
\right.\notag\\&\quad\qquad\qquad\left.
+n\left(\frac1{1-\frac{p-\hat\rho}n}-\frac{(n-p-1)(n-p-2)}{n^2p}\sum\limits_{c=1}^{p}\frac{\sigma^4}{l_c^2}\right)
\right.\notag\\&\quad\qquad\qquad\left.
+n\left(\frac{(n-p-1)p}{n^2}\left(\frac1p\sum\limits_{c=1}^p\frac{\sigma^2}{l_c}\right)^2-\frac{\frac{p-\hat\rho}n}{1-\frac{p-\hat\rho}n}\right)
\right]+\frac{nE_n}{B}.
\label{eq:NORM-a1}
\end{align}}
First consider $nE_n$. We know the asymptotic behavior of all terms except $\frac3{np}\sum\limits_{k=1}^{\hat\rho}\sum\limits_{c\neq d=r+1}^p \frac1{l_k-l_c}\frac{\hat\gamma_k}{l_k-l_d}$, which we can crudely bound as
$$
0<
\frac3{np}\sum_{k=1}^{\hat\rho}\sum_{c\neq d=\hat\rho+1}^p \frac1{l_k-l_c}\frac{\hat\gamma_k}{l_k-l_d}
<\frac3{np}\sum_{k=1}^{\hat\rho}\hat \gamma_k\left(\sum_{c=\hat\rho+1}^p \frac1{l_k-l_c}\right)^2.
$$
Therefore, we obtain that 
\begin{align}&
\frac{(1-c)^2}c\sum_{k=1}^{\hat\rho}\frac{\gamma_k^2\bar\gamma_k}{(\gamma_k+\sigma^2)^2(\gamma_k+c\sigma^2)^2}
+\frac1{\sigma^2}\sum_{k=1}^{\hat\rho}\frac{\gamma_k\bar\gamma_k}{(\gamma_k+\sigma^2)(\gamma_k+c\sigma^2)}
\notag\\&\qquad\qquad
-2\frac1{\sigma^2}\sum_{k=1}^{\hat\rho}\frac{\gamma_k\bar\gamma_k}{(\gamma_k+c\sigma^2)^2}
\notag\\&\hspace{100pt}\geq\quad
\lim_{n\rightarrow\infty}nE_n
\quad\geq
\notag\\&
\frac{(1-c)^2}c\sum_{k=1}^{\hat\rho}\frac{\gamma_k^2\bar\gamma_k}{(\gamma_k+\sigma^2)^2(\gamma_k+c\sigma^2)^2}
+\frac1{\sigma^2}\sum_{k=1}^{\hat\rho}\frac{\gamma_k\bar\gamma_k}{(\gamma_k+\sigma^2)(\gamma_k+c\sigma^2)}
\notag\\&\qquad\qquad
-2\frac1{\sigma^2}\sum_{k=1}^{\hat\rho}\frac{\gamma_k\bar\gamma_k}{(\gamma_k+c\sigma^2)^2}
-3c\sum_{k=1}^{\hat\rho}\frac{\bar\gamma_k}{(\gamma_k+c\sigma^2)^2}
\label{eq:NORM-d2}
\end{align}
almost surely, using the results of Lemma \ref{lem:LLN} and \cite{BaikSilverstein06}, Theorem 1.1. Now notice that
\begin{align}
&n\left(\frac{n-p-1}{np}\sum\limits_{c=1}^p\frac{\sigma^2}{l_c}-1\right)
\notag\\&=
n\left[\frac{n-p-1}{np}\sum\limits_{c=1}^p\frac{\sigma^2}{l_c}-\frac{n-p-1}{n(1-\frac{p-\rho}n)}+\frac{n-p-1}{n(1-\frac{p-\rho}n)}-1\right]
\notag\\&=
\left(1-\frac{p}n-\frac1n\right)n\left[\frac{1}{p}\sum\limits_{c=1}^p\frac{\sigma^2}{l_c}-\frac{1}{1-\frac{p-\rho}n}\right]-\frac {n(\rho+1)}{n-p}\label{eq:NORM-c1}
\end{align}
and
\begin{align}
&n\left(\frac1{1-\frac pn}-\frac{(n-p-1)(n-p-2)}{n^2p}\sum\limits_{c=1}^{p}\frac{\sigma^4}{l_c^2}\right)
\notag\\&\qquad=
n\left(\frac{n}{n-p}-\frac{(n-p-1)(n-p-2)}{n^2(1-\frac{p-\rho}n)^3}
\right.\notag\\&\qquad\qquad\left.
+\frac{(n-p-1)(n-p-2)}{n^2(1-\frac{p-\rho}n)^3}-\frac{(n-p-1)(n-p-2)}{n^2p}\sum\limits_{c=1}^{p}\frac{\sigma^4}{l_c^2}\right)
\notag\\&\qquad=
\left(1-\frac{p}n-\frac1n\right)\left(1-\frac{p}n-\frac2n\right)n\left[\frac1{(1-\frac{p-\rho}n)^3}-\frac1{p}\sum\limits_{c=1}^{p}\frac{\sigma^4}{l_c^2}\right]
\notag\\&\qquad\qquad
+\frac{\rho n^2}{(n-p)(n-p+\rho)}+\frac{(2r+3)n^2}{(n-p+\rho)^2}-\frac{(\rho^2+3r+2)n^2}{(n-p+\rho)^3}.\label{eq:NORM-c2}
\end{align}
Moreover,
\begin{align}
&n\left(\frac{(n-p-1)p}{n^2}\left(\frac1p\sum\limits_{c=1}^p\frac{\sigma^2}{l_c}\right)^2-\frac{\frac{p-\rho}n}{1-\frac{p-\rho}n}\right)
\notag\\&\qquad=
n\bigg(\frac{(n-p-1)p}{n^2}\left(\frac1p\sum\limits_{c=1}^p\frac{\sigma^2}{l_c}\right)^2-\frac{(n-p-1)p}{n^2}\frac1{(1-\frac{p-\rho}n)^2}
\notag\\&\qquad\qquad
+\frac{(n-p-1)p}{n^2}\frac1{(1-\frac{p-\rho}n)^2}-\frac{\frac{p-\rho}n}{1-\frac{p-\rho}n}\bigg)
\notag\\&\qquad=
\left(1-\frac{p}n-\frac1n\right)\frac pnn\left[\left(\frac1p\sum\limits_{c=1}^p\frac{\sigma^2}{l_c}\right)^2-\frac1{(1-\frac{p-\rho}n)^2}\right]
\notag\\&\qquad\qquad
+\frac{\rho n}{n-p+\rho}-\frac{(\rho+1)pn}{(n-p+\rho)^2}
\notag\\&\qquad=
\left(1-\frac{p}n-\frac1n\right)\frac pn\left[\frac1p\sum\limits_{c=1}^p\frac{\sigma^2}{l_c}+\frac1{1-\frac{p-\rho}n}\right] \!n\!\left[\frac1p\sum\limits_{c=1}^p\frac{\sigma^2}{l_c}-\frac1{1-\frac{p-\rho}n}\right]
\notag\\&\qquad\qquad
+\frac{\rho n}{n-p+\rho}-\frac{(\rho+1)pn}{(n-p+\rho)^2}.\label{eq:NORM-c3}
\end{align}
Now, divide the sample covariance matrix as
\begin{align*}
S_n=\sigma^2\left(
{\renewcommand{\arraystretch}{1.5}
\begin{array}{c|c}
S^{11}_n & S^{12}_n \\
\hline
S^{21}_n & S^{22}_n
\end{array}}
\right)
\end{align*}
with $S^{11}_n$ $\rho\times\rho$. Then $S^{22}_n$ has a $\text{W}_{p-\rho}(n,I)$ distribution -- let $\mu_1>...>\mu_{p-\rho}$ be its eigenvalues and notice that by Cauchy's interlacing theorem, $l_i>\sigma^2\mu_i>l_{i+\rho}$ for all $i=1,...,p-\rho$. Therefore, we have
\begin{align}
&
\frac{p-\rho}pn\left[\frac1{p-\rho}\sum\limits_{c=1}^{p-\rho}\frac{1}{\mu_c}-\frac1{1-\frac{p-\rho}n}\right]+\rho\frac np\frac{\sigma^2}{l_p}-\frac{\rho n^2}{(n-p+\rho)p}
\notag\\&\qquad\qquad\geq\;
n\left[\frac1p\sum\limits_{c=1}^p\frac{\sigma^2}{l_c}-\frac1{1-\frac{p-\rho}n}\right]
\geq\;
\notag\\&
\frac{p-\rho}pn\left[\frac1{p-\rho}\sum\limits_{c=1}^{p-\rho}\frac{1}{\mu_c}-\frac1{1-\frac{p-\rho}n}\right]+\frac np\sum\limits_{c=1}^{\rho}\frac{\sigma^2}{l_c}-\frac{\rho n^2}{(n-p+\rho)p}\label{eq:NORM-b1}
\end{align}
and
\begin{align}
&
\frac{p-\rho}pn\left[\frac1{p-\rho}\sum\limits_{c=1}^{p-\rho}\frac{1}{\mu^2_c}-\frac1{(1-\frac{p-\rho}n)^3}\right]+\rho\frac np\frac{\sigma^4}{l_p^2}-\frac{\rho n^4}{(n-p+\rho)^3p}
\notag\\&\qquad\qquad\geq\;
n\left[\frac1p\sum\limits_{c=1}^p\frac{\sigma^4}{l^2_c}-\frac1{(1-\frac{p-\rho}n)^3}\right]
\geq\;
\notag\\&
\frac{p-\rho}pn\left[\frac1{p-\rho}\sum\limits_{c=1}^{p-\rho}\frac{1}{\mu^2_c}-\frac1{(1-\frac{p-\rho}n)^3}\right]+\frac np\sum\limits_{c=1}^{\rho}\frac{\sigma^4}{l_c^2}-\frac{\rho n^4}{(n-p+\rho)^3p}.\label{eq:NORM-b2}
\end{align}
Consequently, let us study the quantities
\begin{align*}
n\left[\frac1{p-\rho}\sum\limits_{c=1}^{p-\rho}\frac1{\mu_c}-\frac1{1-\frac{p-\rho}n}\right]
\quad\text{and}\quad
n\left[\frac1{p-\rho}\sum\limits_{c=1}^{p-\rho}\frac{1}{\mu_c^2}-\frac1{(1-\frac{p-\rho}n)^3}\right].
\end{align*}
We use Theorem 1.1 in \cite{BaiSilverstein04}. First notice that, in our white Wishart case, what they write $F^{c,H}$ is the c.d.f. of a Mar\v cenko-Pastur distribution with parameter $c$. Let $c_n=(p-\rho)/n$ - then
\begin{align*}
&\int \frac1x dG_n(x)=n\left[\int\frac1x dF^{S^{22}_n}(x)-\int\frac1x dF^{c_n,F^I}(x)\right]
\\&\qquad=
n\left[\frac1{p-\rho}\sum\limits_{c=1}^{p-\rho}\frac1{\mu_c}
-\bigints_{[1-\sqrt{c_n}]^2}^{[1+\sqrt{c_n}]^2}\hspace{-40pt}
\frac{\sqrt{([1+\sqrt{c_n}]^2-t)(t-[1-\sqrt{c_n}]^2)}}{2\pi c_nt^2}dt\right]
\\&\qquad=
n\left[\frac1{p-\rho}\sum\limits_{c=1}^{p-\rho}\frac1{\mu_c}-\frac1{(1-\frac{p-\rho}n)^2}\right]
\end{align*}
and
\begin{align*}
&\int \frac1{x^2} dG_n(x)=n\left[\int\frac1{x^2} dF^{S^{22}}(x)-\int\frac1{x^2} dF^{c_n,F^I}(x)\right]
\\&\qquad=
n\left[\frac1{p-\rho}\sum\limits_{c=1}^{p-\rho}\frac1{\mu^2_c}
-\bigints_{[1-\sqrt{c_n}]^2}^{[1+\sqrt{c_n}]^2}\hspace{-40pt}
\frac{\sqrt{([1+\sqrt{c_n}]^2-t)(t-[1-\sqrt{c_n}]^2)}}{2\pi c_nt^3}dt\right]
\\&\qquad=
n\left[\frac1{p-\rho}\sum\limits_{c=1}^{p-\rho}\frac1{\mu^2_c}-\frac1{(1-\frac{p-\rho}n)^3}\right].
\end{align*}
But according to the theorem, as $p_n/n\rightarrow c\in(0,1)$
\begin{align*}
\left(\int \frac1{x} dG_n(x),\int \frac1{x^2} dG_n(x)\right)
\xrightarrow[n\rightarrow\infty]{\mathcal{D}}
\text{N}_2\left(\tilde\mu,\tilde\Sigma\right)
\end{align*}
where the components of $\tilde\mu$ and $\tilde\Sigma$ are given by eq. (1.6) and (1.7) in the theorem statement (\cite{BaiSilverstein04} p.\ 558). To compute these, we follow the arguments of Section 5 from the same paper. According to eq. (5.13) from p.\ 598, we find
\begin{align*}
&\tilde\mu_1=\frac14\left[\frac1{[1-\sqrt{c}]^2}+\frac1{[1+\sqrt{c}]^2}\right]
-\frac1{2\pi}\int_{[1-\sqrt{c}]^2}^{[1+\sqrt{c}]^2}\frac{dt}{t\sqrt{4c-(t-1-c)^2}},
\\&
\tilde\mu_2=\frac14\left[\frac1{[1-\sqrt{c}]^4}+\frac1{[1+\sqrt{c}]^4}\right]
-\frac1{2\pi}\int_{[1-\sqrt{c}]^2}^{[1+\sqrt{c}]^2}\frac{dt}{t^2\sqrt{4c-(t-1-c)^2}},
\end{align*}
and the integrals give, after a Poisson substitution $t=1+c-2\sqrt{c}\cos\theta$,
\begin{align*}
&\frac1{2\pi}\int_{[1-\sqrt{c}]^2}^{[1+\sqrt{c}]^2}\frac{dt}{t\sqrt{4c-(t-1-c)^2}}
=
\frac1{2\pi}\int_{0}^{\pi}\frac{d\theta}{1+c-2\sqrt{c}\cos\theta}
\\&\qquad=
\frac1{2\pi}\left[\frac2{1-c}\arctan\left(\frac{1+\sqrt{c}}{1-\sqrt{c}}\tan(\theta/2)\right)\right]_{0}^{\pi}
\\&\qquad=
\frac1{2\pi}\left[\frac2{1-c}\frac{\pi}2-0\right]=\frac1{2(1-c)},
\\&
\frac1{2\pi}\int_{[1-\sqrt{c}]^2}^{[1+\sqrt{c}]^2}\frac{dt}{t^2\sqrt{4c-(t-1-c)^2}}
=
\frac1{2\pi}\int_{0}^{\pi}\frac{d\theta}{(1+c-2\sqrt{c}\cos\theta)^2}
\\&\qquad=
\frac1{2\pi}\left[\frac{2(1+c)}{(1-c)^3}\arctan\left(\frac{1+\sqrt{c}}{1-\sqrt{c}}\tan(\theta/2)\right)
\right.\\&\qquad\qquad\qquad\left.
+\frac1{(1-c)^2}\frac{2\sqrt{c}\sin\theta}{1+c-2\sqrt{c}\cos\theta}\right]_{0}^{\pi}
\\&\qquad=
\frac1{2\pi}\left[\frac{2(1+c)}{(1-c)^3}\frac{\pi}2+0-0-0\right]
=\frac{1+c}{2(1-c)^3}.
\end{align*}
Therefore, we obtain
\begin{align*}
&\tilde\mu_1
=\frac{1+c}{2[1-\sqrt{c}]^2[1+\sqrt{c}]^2}-\frac1{2(1-c)}
=\frac{c}{(1-c)^2},
\\&
\tilde\mu_2=\frac{1+6c+c^2}{2[1-\sqrt{c}]^4[1+\sqrt{c}]^4}
-\frac{1+c}{2(1-c)^3}
=\frac{c(c+3)}{(1-c)^4}.
\end{align*}
For the variances, according to (1.16), p.\ 564, we can write
\begin{align*}
&\tilde\Sigma_{11}=-\frac1{2\pi^2}\oint_{C_1}\oint_{C_2}\frac{dm_1dm_2}{z(m_1)z(m_2)(m_1-m_2)^2},
\\&\tilde\Sigma_{12}=-\frac1{2\pi^2}\oint_{C_1}\oint_{C_2}\frac{dm_1dm_2}{z(m_1)^2z(m_2)(m_1-m_2)^2},
\\&\tilde\Sigma_{22}=-\frac1{2\pi^2}\oint_{C_1}\oint_{C_2}\frac{dm_1dm_2}{z(m_1)^2z(m_2)^2(m_1-m_2)^2},
\end{align*}
where $C_1$, $C_2$ are contours that can be chosen counterclockwise, nonintersecting and enclosing $1/(c-1)$ (cf. p.\ 598), and where $z(m)$ stands for the inverse Stieltjes transform of the complimentary Mar\v cenko-Pastur distribution, which has closed form
\begin{align*}
z(m)=-\frac1{m}+\frac{c}{1+m}.
\end{align*}
We first find:
\begin{align*}
&\oint_{C_1}\frac{dm_1}{z(m_1)(m_1-m_2)^2}
=-\frac1{1-c}\oint_{C_1}\frac{m_1(m_1+1)}{(m_1-m_2)^2}\frac{dm_1}{m_1-1/(c-1)}
\\&\qquad
=-\frac{2\pi c i }{(1-c)^3}\frac{1}{[m_2-1/(c-1)]^2},
\\&\oint_{C_1}\frac{dm_1}{z(m_1)^2(m_1-m_2)^2}
=\frac1{(1-c)^2}\oint_{C_1}\frac{m_1^2(m_1+1)^2}{(m_1-m_2)^2}\frac{dm_1}{[m_1-1/(c-1)]^2}
\\&\qquad
=-4\pi i\frac{c}{(1-c)^5}\frac{1}{[m_2-1/(c-1)]^2}.
\end{align*}
But then,
\begin{align*}
&\oint_{C_2}\frac{dm_2}{z(m_2)[m_2-1/(c-1)]^2}
=
-\frac1{1-c}\oint_{C_2}\frac{m_2(m_2+1)dm_2}{[m_2-1/(c-1)]^3}
\\&\qquad
=-2\pi i\frac2{2!(1-c)}
=-2\pi i\frac{1}{1-c}
\\&\oint_{C_2}\frac{dm_2}{z(m_2)^2[m_2-1/(c-1)]^2}
=
\frac1{(1-c)^2}\oint_{C_2}\frac{m^2_2(m_2+1)^2dm_2}{[m_2-1/(c-1)]^4}
\\&\qquad
=2\pi i\frac{12(1-2/(1-c))}{3!(1-c)^2}
=-4\pi i\frac{1+c}{(1-c)^3}.
\end{align*}
Therefore,
\begin{align*}
&\tilde\Sigma_{11}=\frac{2c}{(1-c)^4},
\quad
\tilde\Sigma_{12}=\frac{4c}{(1-c)^6}
\quad\text{ and }\quad
\tilde\Sigma_{22}=\frac{8c(1+c)}{(1-c)^8}.
\end{align*}
In summary,
\begin{align}
&\left(\int \frac1{x} dG_n(x),\int \frac1{x^2} dG_n(x)\right)
\notag\\&\qquad\qquad
\xrightarrow[n\rightarrow\infty]{\mathcal{D}}
\text{N}_2\left(
\left[\begin{array}{c}
\frac{c}{(1-c)^2} \\
\frac{c(c+3)}{(1-c)^4}
\end{array}\right]
,
\left[\begin{array}{cc}
\frac{2c}{(1-c)^4} & \frac{4c}{(1-c)^6}\\
\frac{4c}{(1-c)^6} & \frac{8c(1+c)}{(1-c)^8}
\end{array}\right]
\right).\label{eq:NORM-l1}
\end{align}
Therefore, going back to eq.\ (\ref{eq:NORM-a1}) and ineq.\ (\ref{eq:NORM-c1}),  (\ref{eq:NORM-c2}), (\ref{eq:NORM-c3}), (\ref{eq:NORM-b1}) and (\ref{eq:NORM-b2}), we have the upper bound
{\setlength{\mathindent}{0pt}\begin{align*}&
n(\tilde\sigma^2-\sigma^2)
\notag\\&\quad=
\frac{\sigma^2}{\sigma^4 B}\left[
\left(1-\frac{p}n-\frac1n\right)n\left[\frac{1}{p}\sum\limits_{c=1}^p\frac{\sigma^2}{l_c}-\frac{1}{1-\frac{p-\rho}n}\right]
\right.\notag\\&\qquad\qquad\qquad\left.
-\frac{n(\rho+1)}{n-p}
\right.\notag\\&\qquad\qquad\left.
-\left(1-\frac{p}n-\frac1n\right)\left(1-\frac{p}n-\frac2n\right)n\left[\frac1{p}\sum\limits_{c=1}^{p}\frac{\sigma^4}{l_c^2}-\frac1{(1-\frac{p-\rho}n)^3}\right]
\right.\notag\\&\qquad\qquad\qquad\left.
+\frac{\rho n^2}{(n-p)(n-p+\rho)}+\frac{(2\rho+3)n^2}{(n-p+\rho)^2}-\frac{(\rho^2+3\rho+2)n^2}{(n-p+\rho)^3}
\right.\notag\\&\qquad\qquad\left.
+\left(1-\frac{p}n-\frac1n\right)\frac pn\left[\frac1p\sum\limits_{c=1}^p\frac{\sigma^2}{l_c}+\frac1{1-\frac{p-\rho}n}\right] n\left[\frac1p\sum\limits_{c=1}^p\frac{\sigma^2}{l_c}-\frac1{1-\frac{p-\rho}n}\right]
\right.\notag\\&\qquad\qquad\qquad\left.
+\frac{rn}{n-p+\rho}-\frac{(\rho+1)pn}{(n-p+\rho)^2}
\right]
+\frac{nE_n}{B}
\notag\\&\quad\leq
\frac{\sigma^2}{\sigma^4 B}\left[
\left(1-\frac{p}n-\frac1n\right)\frac{p-\rho}pn\left[\frac1{p-\rho}\sum\limits_{c=1}^{p-\rho}\frac{1}{\mu_c}-\frac1{1-\frac{p-\rho}n}\right]
-\frac{n(\rho+1)}{n-p}
\right.\notag\\&\qquad\qquad\qquad\left.
+\left(1-\frac{p}n-\frac1n\right)\rho\frac np\frac{\sigma^2}{l_p}
-\left(1-\frac{p}n-\frac1n\right)\frac{\rho n^2}{(n-p+\rho)p}
\right.\notag\\&\qquad\qquad\left.
-\left(1-\frac{p}n-\frac1n\right)\!\left(1-\frac{p}n-\frac2n\right)\!\frac{p-\rho}pn\!\left[\frac1{p-\rho}\sum\limits_{c=1}^{p-\rho}\frac{1}{\mu^2_c}-\frac1{(1-\frac{p-\rho}n)^3}\right]
\right.\notag\\&\qquad\qquad\qquad\left.
-\left(1-\frac{p}n-\frac1n\right)\left(1-\frac{p}n-\frac2n\right)\frac np\sum\limits_{c=1}^{\rho}\frac{\sigma^4}{l_c^2}
\right.\notag\\&\qquad\qquad\qquad\left.
+\left(1-\frac{p}n-\frac1n\right)\left(1-\frac{p}n-\frac2n\right)\frac{\rho n^4}{(n-p+\rho)^3p}
\right.\notag\\&\qquad\qquad\qquad\left.
+\frac{\rho n^2}{(n-p)(n-p+\rho)}+\frac{(2\rho+3)n^2}{(n-p+\rho)^2}-\frac{(\rho^2+3\rho+2)n^2}{(n-p+\rho)^3}
\right.\notag\\&\qquad\qquad\left.
+\left(1-\frac{p}n-\frac1n\right)\frac pn\left[\frac1p\sum\limits_{c=1}^p\frac{\sigma^2}{l_c}+\frac1{1-\frac{p-\rho}n}\right]\cdot
\right.\notag\\&\hspace{150pt}\left.
\frac{p-\rho}pn\left[\frac1{p-\rho}\sum\limits_{c=1}^{p-\rho}\frac{1}{\mu_c}-\frac1{1-\frac{p-\rho}n}\right]
\right.\notag\\&\qquad\qquad\qquad\left.
+\left(1-\frac{p}n-\frac1n\right)\frac pn\left[\frac1p\sum\limits_{c=1}^p\frac{\sigma^2}{l_c}+\frac1{1-\frac{p-\rho}n}\right]
\rho\frac np\frac{\sigma^2}{l_p}
\right.\notag\\&\qquad\qquad\qquad\left.
-\left(1-\frac{p}n-\frac1n\right)\frac pn\left[\frac1p\sum\limits_{c=1}^p\frac{\sigma^2}{l_c}+\frac1{1-\frac{p-\rho}n}\right]
\frac{\rho n^2}{(n-p+\rho)p}
\right.\notag\\&\qquad\qquad\qquad\left.
+\frac{\rho n}{n-p+\rho}-\frac{(\rho+1)pn}{(n-p+\rho)^2}
\right]
+\frac{nE_n}{B}
\notag\\&\quad=
\frac{\sigma^2}{\sigma^4 B}\left(a^{(n)}_1n\left[\frac1{p-\rho}\sum\limits_{c=1}^{p-\rho}\frac{1}{\mu_c}-\frac1{1-c}\right]+a^{(n)}_2n\left[\frac1{p-\rho}\sum\limits_{c=1}^{p-\rho}\frac{1}{\mu^2_c}-\frac1{(1-c)^3}\right]
\right.\notag\\&\hspace{100pt}\left.
+b^{(n)}\right)+\frac{nE_n}{B}.
\end{align*}}
Now note that 
{\setlength{\mathindent}{5pt}\begin{align*}&
\qquad
a^{(n)}_1\overset{\text{a.s.}}{\rightarrow}1+c,
\qquad
a^{(n)}_2\overset{\text{a.s.}}{\rightarrow}-(1-c)^2,
\\&
b^{(n)}\overset{\text{a.s.}}{\rightarrow}
\frac{2c(\rho+1)-1}{(1-c)^2}+\!\frac{(2-c)(1+c)\rho}{(1-\sqrt{c})^2}
-\!\frac{(1-c)^2}c\!\sum_{k=1}^\rho\frac{\sigma^4\gamma_k^2}{(\gamma_k+\sigma^2)^2(\gamma_k+c\sigma^2)^2}
\end{align*}}
using Lemma \ref{lem:LLN} and \cite{BaikSilverstein06}, Theorem 1.1.
Therefore, using Slutsky and eq. (\ref{eq:NORM-l1}),
\begin{align*}&
a^{(n)}_1n\left[\frac1{p-\rho}\sum\limits_{c=1}^{p-\rho}\frac{1}{\mu_c}-\frac1{1-c}\right]
+a^{(n)}_2n\left[\frac1{p-\rho}\sum\limits_{c=1}^{p-\rho}\frac{1}{\mu^2_c}-\frac1{(1-c)^3}\right]+b^{(n)}
\\&\qquad\qquad
\overset{\mathcal{D}}{\longrightarrow}\quad
\text{N}\left(\mu,\frac{2c(1+c)^2}{(1-c)^4}\right),
\end{align*}
with 
$$\mu=\frac{(2-c)(1+c)\rho}{(1-\sqrt{c})^2}+\frac{2c\rho-1}{(1-c)^2}-\frac{(1-c)^2}c\sum_{k=1}^\rho\frac{\sigma^4\gamma_k^2}{(\gamma_k+\sigma^2)^2(\gamma_k+c\sigma^2)^2}.$$
Therefore, using eq. (\ref{eq:NORM-d1}) and (\ref{eq:NORM-d2}) we obtain $n(\tilde\sigma^2-\sigma^2)\leq X^+_n$ with $X^+_n\xrightarrow[n\rightarrow\infty]{\mathcal{D}}\text{N}\left(\mu^+,\frac{2c(1+c)^2\sigma^4}{(1-c)^4}\right)$, where
{\setlength{\mathindent}{5pt}\begin{align}&
\mu^+=\frac{(2c\rho-1)\sigma^2}{(1-c)^2}+\!\frac{(2-c)(1+c)\rho\sigma^2}{(1-\sqrt{c})^2}-\!\frac{(1-c)^2}c\!\sum_{k=1}^\rho\frac{\sigma^6\gamma_k^2}{(\gamma_k+\sigma^2)^2(\gamma_k+c\sigma^2)^2}
\notag\\&\qquad\qquad
+\frac{(1-c)^2}c\sum_{k=1}^\rho\frac{\gamma_k^2\bar\gamma_k\sigma^4}{(\gamma_k+\sigma^2)^2(\gamma_k+c\sigma^2)^2}
+\sum_{k=1}^\rho\frac{\gamma_k\bar\gamma_k\sigma^2}{(\gamma_k+\sigma^2)(\gamma_k+c\sigma^2)}
\notag\\&\qquad\qquad
-2\sum_{k=1}^\rho\frac{\gamma_k\bar\gamma_k\sigma^2}{(\gamma_k+c\sigma^2)^2}.
\label{eq:NORM-mu+}
\end{align}}
The same argument can be done to obtain a lower bound. Using eq. (\ref{eq:NORM-a1}), (\ref{eq:NORM-c1}),  (\ref{eq:NORM-c2}), (\ref{eq:NORM-c3}), (\ref{eq:NORM-b1}) and (\ref{eq:NORM-b2}) again, we get
\begin{align*}&
n(\tilde\sigma^2-\sigma^2)
\leq\;
\frac{\sigma^2}{\sigma^4 B}\left(
a^{(n)}_1n\left[\frac1{p-\rho}\sum\limits_{c=1}^{p-\rho}\frac{1}{\mu_c}-\frac1{1-c}\right]
\right.\notag\\&\hspace{50pt}\left.
+a^{(n)}_2n\left[\frac1{p-\rho}\sum\limits_{c=1}^{p-\rho}\frac{1}{\mu^2_c}-\frac1{(1-c)^3}\right]
+b^{(n)}
\right)+\frac{nE_n}{B}
\end{align*}
where
{\setlength{\mathindent}{5pt}\begin{align*}&
a^{(n)}_1\overset{\text{a.s.}}{\rightarrow}1+c,
\qquad
a^{(n)}_2\overset{\text{a.s.}}{\rightarrow}-(1-c)^2,
\\&
b^{(n)}\overset{\text{a.s.}}{\rightarrow}
\frac{2c(\rho+1)-1}{(1-c)^2}-\frac{(1-c)^2\rho}{c(1-\sqrt{c})^2}
+\frac{1+c}{c}\sum_{k=1}^\rho\frac{\sigma^2\gamma_k}{(\gamma_k+\sigma^2)(\gamma_k+c\sigma^2)}
\end{align*}}
Therefore, again using eq. (\ref{eq:NORM-d1}) and (\ref{eq:NORM-d2}) we obtain that $n(\tilde\sigma^2-\sigma^2)\geq X^-_n$ with $X^-_n\xrightarrow[n\rightarrow\infty]{\mathcal{D}}\text{N}\left(\mu^-,\frac{2c(1+c)^2\sigma^4}{(1-c)^4}\right)$, where
{\setlength{\mathindent}{5pt}\begin{align}&
\mu^-
=
\frac{(2c\rho-1)\sigma^2}{(1-c)^2}-\frac{(1-c)^2\rho\sigma^2}{c(1-\sqrt{c})^2}
+\frac{1+c}{c}\sum_{k=1}^\rho\frac{\sigma^4\gamma_k}{(\gamma_k+\sigma^2)(\gamma_k+c\sigma^2)}
\notag\\&\qquad\qquad
+\frac{(1-c)^2}c\sum_{k=1}^\rho\frac{\gamma_k^2\bar\gamma_k\sigma^4}{(\gamma_k+\sigma^2)^2(\gamma_k+c\sigma^2)^2}
+\sum_{k=1}^\rho\frac{\gamma_k\bar\gamma_k\sigma^2}{(\gamma_k+\sigma^2)(\gamma_k+c\sigma^2)}
\notag\\&\qquad\qquad
-2\sum_{k=1}^\rho\frac{\gamma_k\bar\gamma_k\sigma^2}{(\gamma_k+c\sigma^2)^2}
-3c\sum_{k=1}^\rho\frac{\bar\gamma_k\sigma^4}{(\gamma_k+c\sigma^2)^2}.
\label{eq:NORM-mu-}
\end{align}}
This concludes the proof.
\end{proof}

\begin{proof}[\bf Proof of Lemma \ref{lem:TV}]
Let $\Sigma'\in\text{B}_r(\Sigma,2M)$, and write $\lambda_i=\lambda_i(\Sigma)$, $\lambda'_i=\lambda'_i(\Sigma'_p)$ to simplify notation. Since the sequences are spiked, there are a finite number of different eigenvalues as $n\rightarrow\infty$. We can decompose
\begin{align*}
&\text{N}(0,\Sigma_p)^n=\bigotimes_{i=1}^{p}\text{N}(0,\lambda_i)^n,
\qquad
\text{N}(0,\Sigma'_p)^n=\bigotimes_{i=1}^{p}\text{N}(0,\lambda'_i)^n.
\end{align*}
Let $\lambda$ be the Lebesgue measure on $\mathbb{R}$. Writing the Hellinger affinity between two densities $f,g$ as $\alpha(f,g)=\int\sqrt{fg}d\lambda$, we have by Cauchy-Schwarz
\begin{align*}
\delta_{TV}\big(f,g\big)^2=4(\int |f-g|d\lambda)^2&\leq\; \frac14\big(1-\alpha(f,g)\big)\big(1+\alpha(f,g)\big)
\\&=\;(1-\alpha(f,g)^2)
\end{align*}
Now for two normals, we have
\begin{align*}
\alpha\big(\text{N}(0,a^2),\text{N}(0,b^2)\big)^2=\frac{2ab}{a^2+b^2}
\end{align*}
and since Hellinger affinity distributes over a product of independent densities, we get
\begin{align*}
&\delta_{TV}\left(\vphantom{\bigg\vert}\text{N}(0,\Sigma_p)^n,\text{N}(0,\Sigma'_{p})^n\right)^2
\leq\;1-
\prod_{i=1}^p\left(\frac{2\sqrt{\lambda_i\lambda'_i}}{\lambda_i+\lambda'_i}\right)^n
\end{align*}
Now note that $\frac{2\sqrt{\lambda_i\lambda'_i}}{\lambda_i+\lambda'_i}>\frac{\sqrt{1-2M/n^r}}{1-M/n^r}$ if and only if $\lambda_i(1-\frac{2M}{n^r})<x<\lambda_i(1+\frac{2M}{n^r(1-2M/n^r)})$. By definition, $\Sigma'\in\text{B}_r(\Sigma,2M)$ means $|\lambda_i-\lambda_i'|<\frac{2M\lambda_i}{n^r}$ for all $i$, so we have the bound
\begin{align*}
&\qquad<\;
1-
\left(\frac{\sqrt{1-2M/n^r}}{1-M/n^r}\right)^{np}
=\;
1-
\left(1-\frac{M^2}{n^{2r}(1-M/n^r)^2}\right)^{np/2}
\notag
\end{align*}
over all $\Sigma'\in\text{B}_r(\Sigma,2M)$. Taking a supremum and a limit yields
\begin{align*}&
\lim_{n\rightarrow\infty}\sup_{\Sigma'\in\text{B}_1(\Sigma,2M)}\delta_{TV}\left(\vphantom{\bigg\vert}\text{N}(0,\Sigma_p)^n,\text{N}(0,\Sigma'_{p})^n\right)^2
\\&\qquad\leq\;
\lim_{n\rightarrow\infty}
\left[1-
\left(1-\frac{M^2}{n^2(1-M/n)^2}\right)^{np/2}
\right]
=\;
1-
e^{-cM^2/2}
\end{align*}
and
\begin{align*}&
\lim_{n\rightarrow\infty}\sup_{\Sigma'\in\text{B}_r(\Sigma,2M)}\delta_{TV}\left(\vphantom{\bigg\vert}\text{N}(0,\Sigma_p)^n,\text{N}(0,\Sigma'_{p})^n\right)^2=0,
\end{align*}
as desired.
\end{proof}

\begin{proof}[\bf Proof of Theorem \ref{thm:MRLB}]
Define $\Sigma'_p=(1-\frac{2M_\epsilon}n)\Sigma_p$. We have 
\begin{align}
|\sigma^2-\sigma^{2\prime}|= 2\sigma^2\frac{M_\epsilon}n.\label{eq:thmMRLB-1}
\end{align}
Moreover, according to Lemma \ref{lem:TV}, we have
\begin{align*}
\lim_{n\rightarrow\infty}\delta_{TV}\left(\vphantom{\bigg\vert}\text{N}(0,\Sigma_p)^n,\text{N}(0,\Sigma'_{p,M})^n\right)\leq \sqrt{1-\exp\left(-\frac{cM_\epsilon^2}{2}\right)}
=1-4\epsilon,
\end{align*}
so for some $N_\epsilon$ we have $\delta_{TV}<1-2\epsilon$ for all $n\geq N_\epsilon$.
Now note that $\Sigma'_p\in\text{B}_1(\Sigma_p,\frac{2M_\epsilon}n\|\Sigma_p\|_2) $. Say for some $\hat\sigma^2$ we have
\begin{align}
\sup_{\substack{\Sigma'\in\text{B}_1(\Sigma,2M_\epsilon)}}
\text{P}_{\Sigma'_p}\left[\vphantom{\Bigg\vert}|\hat\sigma^2-\sigma^{2\prime}|\geq \sigma^2\frac{M_\epsilon}n\right]<\epsilon.\label{eq:thmMRLB-2}
\end{align}
Define the event $A=\left[|\hat\sigma^2-\sigma^2|\leq \sigma^2\frac{M_\epsilon}n\right]$ - then by (\ref{eq:thmMRLB-1}) and (\ref{eq:thmMRLB-2}) we find $\text{P}_{\Sigma_p}[A]\geq 1-\epsilon$ and $\text{P}_{\Sigma'_p}[A]<\epsilon$. Therefore,
\begin{align*}
\delta_{TV}\left(\vphantom{\bigg\vert}\text{N}(0,\Sigma_p)^n,\text{N}(0,\Sigma'_{p,M})^n\right)\geq \text{P}_{\Sigma_p}[A]-\text{P}_{\Sigma'_p}[A]\geq 1-2\epsilon
\end{align*}
which contradicts $\delta_{TV}<1-2\epsilon$ for all $n\geq N_\epsilon$ - therefore,
\begin{align*}
\sup_{\substack{\Sigma'\in\text{B}_1(\Sigma,2M_\epsilon)}}
\text{P}_{\Sigma'_p}\left[\vphantom{\Bigg\vert}|\hat\sigma^2-\sigma^{2\prime}|\geq \sigma^2\frac{M_\epsilon}{n^r}\right]\geq\epsilon
\end{align*}
for all $\hat\sigma^2$ and $n\geq N_\epsilon$, yielding the desired result.
\end{proof}

\begin{proof}[\bf Proof of Proposition \ref{prop:RATE}]
We have:
\begin{align*}&
\text{P}_{\Sigma'_p}\left[|\tilde\sigma^2-\sigma^{2\prime}|\geq \sigma^2\frac{M}{n^r}\right]
\leq
\text{P}_{\Sigma_p}\left[|\tilde\sigma^2-\sigma^{2\prime}|\geq \sigma^2\frac{M}{n^r}\right]
\\&\qquad
+\bigg|\text{P}_{\Sigma_p}\left[|\tilde\sigma^2-\sigma^{2\prime}|\geq \sigma^2\frac{M}{n^r}\right]-\text{P}_{\Sigma'_p}\left[|\tilde\sigma^2-\sigma^{2\prime}|\geq \sigma^2\frac{M}{n^r}\right]\bigg|
\\&\quad= A_p+B_p.
\end{align*}
Choose any $\Sigma'\in\text{B}_r(\Sigma,2M)$ and write $\lambda_i=\lambda_i(\Sigma)$, $\lambda'_i=\lambda'_i(\Sigma'_p)$. Using Lemma \ref{lem:TV}, we find for the second term
\begin{align*}
\lim_{n\rightarrow\infty} \sup_{\Sigma\in\text{B}_r(\Sigma,2M)}B_p
&\leq \lim_{n\rightarrow\infty}\sup_{\Sigma'\in\text{B}_r(\Sigma,2M)}\delta_{TV}\left(\vphantom{\bigg\vert}\text{N}(0,\Sigma_p)^n,\text{N}(0,\Sigma'_p)^n\right)
\\&=0.
\end{align*}
For the first term, let us use Theorem \ref{thm:NORM}. Since $|\sigma^{2\prime}-\sigma^2|<\sigma^2M/n^r$, we have
{\setlength{\mathindent}{5pt}\begin{align*}&
\sup_{\Sigma\in\text{B}(\Sigma,2M)}A_p\leq\;
\text{P}_{\Sigma_p}\left[n\left|\tilde\sigma^2-\sigma^{2}\right|>
	\sigma^2\frac{M}{n^{r-1}}\right]
\\&\qquad\leq
\text{P}_{\Sigma_p}\left[\frac{(1-c)^2}{\sqrt{2c}(1+c)\sigma^2}\left(X_n^+-\mu^+\right)>
	\frac{(1-c)^2(-2\sigma^2\mu^++M/n^{r-1})}{2\sqrt{2c}(1+c)}\right]
\\&\qquad\quad+
\text{P}_{\Sigma_p}\left[\frac{(1-c)^2}{\sqrt{2c}(1+c)\sigma^2}\left(X_n^--\mu^-\right)<
	\frac{(1-c)^2(-2\sigma^2\mu^--M/n^{r-1})}{2\sqrt{2c}(1+c)}\right]
\\&\qquad\xrightarrow{n\rightarrow\infty}1-\Phi(\infty)+\Phi(-\infty)=0.
\end{align*}}
This concludes the proof.
\end{proof}

\subsection{Proofs for Section \ref{sec:A}}

\begin{proof}[\bf Proof of proposition \ref{prop:CONS}]
We first notice that, from Lemma \ref{lem:LLN} and Theorem \ref{thm:NORM}, for any $1\leq k\leq \rho$ we have
\begin{align}
\hat\gamma_{\rho,k}\xrightarrow[n\rightarrow\infty]{\text{a.s.}}\gamma_k,
\qquad
\tilde\sigma^2_{\rho}\xrightarrow[n\rightarrow\infty]{\text{a.s.}}\sigma^2.
\label{eq:SIM-a1}
\end{align}
Denote as usual the unbiased risk estimator of Theorem \ref{thm:URE} at $\hat\Gamma_r+\tilde\sigma^2_rI_p$ by $F_r+G_r$. By their definitions and Lemma \ref{lem:LLN}, we see that
\begin{align*}
F_{\rho}+G_{\rho}\xrightarrow[n\rightarrow\infty]{\text{a.s.}} 0.
\end{align*}
Thus there exists a $N_1$ (random) such that  $\rho\in \left\{r\,\vert\,\left|F_r+G_r\right|\leq \frac{p+1}n\right\}$ for all $n\geq N_1$. Now note that there is a $N_2$ (also random) such that for all $n\geq N_2$, $\left\{\frac{\1{r<p}}{l_{r+1}}\frac{(1+\sqrt{p/n})^2}{p-r}\sum\limits_{c=r+1}^p l_c\geq 1\right\}=\{\rho\}$. Thus for all $n\geq N_1\vee N_2$, 
\begin{align*}
\hat\rho=\min\{\rho\}=\rho,
\end{align*}
so in particular $\hat\rho\xrightarrow[n\rightarrow\infty]{\text{a.s.}}\rho$. 
Thus, by eq. (\ref{eq:SIM-a1}) and any $\epsilon>0$ there exists a $N_3$ random such that for all $n\geq N_1\vee N_2\vee N_3$, $k<p$ and 
$\left|\hat\gamma_{\hat\rho,k}-\gamma_k\right|=\left|\hat\gamma_{\rho,k}-\gamma_k\right|<\epsilon$.
That is, $\gamma_{\hat\rho,k}\xrightarrow[n\rightarrow\infty]{\text{a.s.}}\gamma_k$. 
Finally, again by eq. (\ref{eq:SIM-a1}) for all $\epsilon>0$, there exists a $N_3$ random such that for all $n\geq N_1\vee N_2\vee N_3$, $|\tilde\sigma^2-\sigma^2|=|\tilde\sigma^2_{\hat\rho}-\sigma^2|=|\tilde\sigma^2_\rho-\sigma^2|<\epsilon$, i.e. $\tilde\sigma^2\xrightarrow[n\rightarrow\infty]{\text{a.s.}}\sigma^2$, as desired.
\end{proof}


\bibliographystyle{plainnat}
\bibliography{biblio}

\end{document}